\documentclass[11pt]{article} 

\usepackage{fullpage}
\usepackage{amsmath, amsthm, amssymb, mathrsfs, bm, mathtools}
\usepackage{ragged2e, enumerate, graphicx, xcolor}
\usepackage{subfiles}
\usepackage{tabularray}

\usepackage{pgf, tikz, float, subcaption}
\usetikzlibrary{graphs, graphs.standard}

\makeatletter
\def\@seccntDot{.}
\def\@seccntformat#1{\csname the#1\endcsname\@seccntDot\hskip 0.5em}
\renewcommand\section{\@startsection{section}{1}{\z@}%
{18\p@ \@plus 6\p@ \@minus 3\p@}%
{9\p@ \@plus 6\p@ \@minus 3\p@}%
{\large\bfseries\boldmath}}
\renewcommand\subsection{\@startsection{subsection}{2}{\z@}%
{12\p@ \@plus 6\p@ \@minus 3\p@}%
{3\p@ \@plus 6\p@ \@minus 3\p@}%
{\bfseries\boldmath}}
\renewcommand\subsubsection{\@startsection{subsubsection}{3}{\z@}%
{12\p@ \@plus 6\p@ \@minus 3\p@}%
{\p@}%
{\bfseries\boldmath}}
\makeatother

\usepackage{microtype}

\theoremstyle{plain}
\newtheorem{theorem}{Theorem}[section]
\newtheorem{lemma}[theorem]{Lemma}
\newtheorem{proposition}[theorem]{Proposition}
\newtheorem{conjecture}[theorem]{Conjecture}

\theoremstyle{definition}
\newtheorem{claim}{Claim}[section]
\newtheorem*{remark}{Remark}

\usepackage[pagebackref]{hyperref}
\hypersetup{
 colorlinks=true,
 urlcolor=purple,
 linkcolor=purple,
 citecolor=purple,
}

\newcommand{\affl}[3]{\noindent #1, Email: {\tt #2}\\ \textsc{#3}\\[1.5pt]}
\numberwithin{equation}{section}
\allowdisplaybreaks

\newcommand{\dif}{\mathop{}\!\mathrm{d}}
\DeclareMathOperator{\tr}{tr}
\DeclareMathOperator{\diag}{diag}

\title{\textbf{Maximum spectral sum of graphs}}
\author{Hitesh Kumar, \ Lele Liu, \ Hermie Monterde, \ Shivaramakrishna Pragada, \ Michael Tait}
\date{}

\begin{document}
\maketitle

\begin{abstract}
For a graph $G$ of order $n$, the spectral sum of $G$ is defined to be the sum $\lambda_1(G) + \lambda_2(G)$, 
where $\lambda_1(G)$ (resp. $\lambda_2(G)$) is the largest (resp. second largest) adjacency eigenvalue of $G$. 
Ebrahimi, Mohar, Nikiforov and Ahmady (2008) conjectured that the spectral sum
\[ 
\lambda_1(G) + \lambda_2(G)\le \frac{8}{7}n
\] 
for any graph $G$. We prove this conjecture by combining tools from the theory of graph limits, 
convex geometry, exterior algebra and convex optimization. The techniques developed are of independent interest.
\end{abstract}

\noindent
\textbf{Keywords:} Spectral sum, Second eigenvalue, Graphon, Convexity, Exterior algebra, Matrix Sum of Squares

\noindent
\textbf{MSC2020:} 05C50, 15A18, 15A60, 52A38, 15A75, 15A39  

\section{Introduction}

Consider a simple undirected graph $G = (V,E)$ with vertex set $V = \{1,2,\ldots,n\}$. 
The \emph{adjacency matrix} of $G$ is defined to be the square matrix $A(G)=[a_{ij}]$ of order $n$, where $a_{ij}=1$ if 
$i$ and $j$ are adjacent, and $a_{ij}=0$ otherwise. We order the eigenvalues of $A(G)$ as 
$\lambda_1(G)\geq\lambda_2(G)\geq\cdots\geq\lambda_n(G)$. The first eigenvalue $\lambda_1(G)$ is called 
the \emph{spectral radius} of $G$. The sum of the first two largest eigenvalues $\lambda_1(G) + \lambda_2(G)$ 
is called the \emph{spectral sum} of $G$. For $n\in \mathbb{N}$, let $\mathcal{G}(n)$ 
denote the set of simple undirected graphs on $n$ vertices. We denote the \emph{complement} of $G$ by $\overline{G}$.

Extremization of graph eigenvalues, particularly $\lambda_1, \lambda_2$ and $\lambda_n$, over various 
graph families is an important and well-studied problem in spectral graph theory, see the 
surveys \cite{Cvetkovic_Rowlinson_1990, Cvetkovic_Simic_1995, Stanic_book}. Investigation of combinations 
of eigenvalues, such as Nordhaus-Gaddum type sums $\lambda_i(G) + \lambda_i(\overline{G})$ 
\cite{Aouchiche_Hansen_2013, Nikiforov_2007, Terpai_2011},  $\lambda_1 - \lambda_2$ 
(\emph{spectral gap}) \cite{Stanic_2013, Stanic_book}, $\lambda_1 - \lambda_n$ (\emph{spectral spread}) 
\cite{Breen_Riasanovsky_Tait_Urschel_2022}, and $\lambda_1 + \lambda_n$ (a measure of bipartiteness) 
\cite{Csikvari_2022}, has also led to interesting results, techniques and applications.

Recall that the energy $\mathcal{E}(G)$ of a graph $G$ on $n$ vertices is defined 
as the sum of the absolute values of eigenvalues of $G$. Equivalently,
\[
\mathcal{E}(G) = 2\max_{1\leq k\leq n} \sum_{i=1}^k \lambda_i(G).
\]
Thus, the sum of the $k$ largest eigenvalues of a graph sheds light on the energy of the graph. 
Mohar \cite{Mohar_2009} established an upper bound on $\sum_{i=1}^k \lambda_i(G)$ for 
$1 \leq k \leq n$ and provided further details on its role in theoretical chemistry. 
Mohar's bound was improved further by Das, Mojallal and Sun \cite{Das_Ahmad_Sun_2019}. 
Gernert\footnote{as described by Nikiforov \cite{Nikiforov_2006_2}.} investigated the case $k=2$ 
and proved that the inequality $\lambda_1 (G) + \lambda_2 (G) \leq n$ holds for various graph 
families, including regular graphs, triangle-free graphs and planar graphs. Gernert asked whether 
this inequality holds for any graph. Nikiforov \cite{Nikiforov_2006_2} answered this question 
in the negative and also proved that the spectral sum of graphs in $\mathcal{G}(n)$ is upper 
bounded by $2n/\sqrt{3}$. Later, Ebrahimi, Mohar, Nikiforov, and 
Ahmady \cite{Ebrahimi_Mohar_Nikiforov_Ahmady_2008} presented an improved upper bound on 
the spectral sum as described below.

\begin{theorem}[\cite{Ebrahimi_Mohar_Nikiforov_Ahmady_2008}]\label{thm:mohar_specsum}
For any graph $G\in \mathcal{G}(n)$, we have 
\[ 
\lambda_1(G)+\lambda_2(G) < \frac{8.0185}{7} n.
\] 
\end{theorem}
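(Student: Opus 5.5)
We reduce the bound in Theorem~\ref{thm:mohar_specsum} to an optimization over unit vectors, using only elementary linear algebra. Let $A=A(G)$, and let $x,y$ be orthonormal eigenvectors for $\lambda_1=\lambda_1(G)$ and $\lambda_2=\lambda_2(G)$. By Perron--Frobenius we may take $x\ge 0$. Since $\lambda_1+\lambda_2=\max\{\tr(U^TAU): U\in\mathbb{R}^{n\times 2},\ U^TU=I\}$ (Ky Fan), we get
\[
\lambda_1+\lambda_2=\sum_{i,j}a_{ij}(x_ix_j+y_iy_j).
\]
The first step is to bound each entry by $a_{ij}\le 1$ on the pairs where the bracket is positive and by $a_{ij}\ge 0$ elsewhere, using $a_{ii}=0$. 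This gives
\[
\lambda_1+\lambda_2\le \sum_{i\ne j}(x_ix_j+y_iy_j)_+ ,
\]
which is a purely analytic quantity in two orthonormal vectors.

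The second step is to use the structure of the second eigenvector. Split $V=P\cup N$ according to the sign of $y$. By interlacing, or by the eigen-equations, one gets that $\lambda_2$ is controlled by the subgraphs induced on $P$ and $N$. We also use the standard estimates $\lambda_1\le\sqrt{2m}$ and $\lambda_1^2+\lambda_2^2\le \tr A^2=2m\le n(n-1)$. The last one already yields Nikiforov's bound $\lambda_1+\lambda_2\le\sqrt{2}\,n$, but it is too weak alone. To improve it, we combine the quadratic-form bound with the constraints $\|x\|=\|y\|=1$ and $\langle x,y\rangle=0$. Writing $s_P=\sum_{i\in P}x_i^2$ and parametrizing the mass of $y$ on $P$ and $N$, we would show that the positive part $(x_ix_j+y_iy_j)_+$ essentially vanishes across $P\times N$ whenever $|y_iy_j|>x_ix_j$. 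This yields an upper bound of the form $n\,F(\alpha,\beta,\gamma)$ for a function $F$ of a few scalar parameters, such as the fractional sizes of $P$ and $N$ and the $x$-masses on them.

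The third step is to maximize $F$, and this is the crux. I expect the extremal configuration to be close to Nikiforov's construction, with a clique-like part and two halves. The optimization over the continuous parameters would be done by Lagrange multipliers, with a careful case analysis of the boundary cases. Because we only need the constant $8.0185/7$ rather than the sharp $8/7$, some slack is available. One can replace the exact positive-part functional by a Cauchy--Schwarz relaxation on each block of the partition $\{P,N\}\times\{P,N\}$, and then verify numerically with rigorous interval bounds that the resulting rational-function maximum is below $8.0185/7$.

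The main obstacle is making the relaxation in the second step tight enough: crude bounds lose too much and give only $2/\sqrt3\approx1.1547$. I would first try a finer partition, splitting vertices additionally by the ratio $|y_i|/x_i$ into a bounded number of classes. I would then solve the resulting finite-dimensional quadratic program by computer-assisted verification, checking that its value is strictly less than $8.0185/7$. Strictness then follows because the relaxation is strict for finite $n$, owing to $a_{ii}=0$.
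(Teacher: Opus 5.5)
Your Step 1 is correct, but at the level of the maximum it loses nothing: it is an exact reformulation, not a relaxation that makes progress. For any orthonormal pair $(x,y)$, let $G$ be the graph with $i\sim j$ iff $x_ix_j+y_iy_j>0$. By Ky Fan, $\lambda_1(G)+\lambda_2(G)\ge\sum_{i\ne j}(x_ix_j+y_iy_j)_+$. Hence $\max_{G\in\mathcal{G}(n)}(\lambda_1+\lambda_2)$ \emph{equals} the maximum of your functional over orthonormal pairs; the same observation underlies Lemma~\ref{lemma:adjacency}.

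So all the content must come from Steps 2--3, and there the proposal only describes what you hope to show. The remark that $(x_ix_j+y_iy_j)_+$ vanishes on $P\times N$ when $|y_iy_j|\ge x_ix_j$ is just the definition of the positive part when $y_iy_j<0$. On $P\times P$ and $N\times N$ the kernel is nonnegative, so up to diagonal terms these blocks contribute exactly $(\sum_P x_i)^2+(\sum_P y_i)^2$ and $(\sum_N x_i)^2+(\sum_N y_i)^2$. The whole difficulty is the $P\times N$ block. Its value depends on all pairwise angle comparisons between the planar points $(x_i,y_i)$, i.e.\ on the full configuration, not on a few scalars. Nothing you wrote produces the bound $nF(\alpha,\beta,\gamma)$.

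The missing idea is a structural reduction of a maximizer to finitely many parameters. The paper does not reprove the cited result; within the paper it follows at once from Theorem~\ref{thm:upper_bound_8_over_7_graph}, since $8n/7<8.0185n/7$. But this reduction is exactly where the paper's effort goes:
\begin{itemize}
\item an extremal graphon;
\item the ellipse equation of Lemma~\ref{lemma:ellipse}, obtained via a stretching perturbation followed by re-orthogonalization;
\item Carath\'eodory applied to $x\mapsto(f(x)^2,f(x)g(x))$, combined with the cloning Lemma~\ref{lemma:mixed-cloning}, which shows the first moments $\int_U f$, $\int_U g$ are then automatically preserved, so $f,g$ can be taken to have at most $3+3$ steps;
\item the reduction to $H_6$ and a sum-of-squares certificate for Lemma~\ref{lemma:H_6}.
\end{itemize}

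Your fallback (angular classes, Cauchy--Schwarz per class, computer verification) is likewise a plan, not a proof. Any such relaxation must retain $\langle x,y\rangle=0$ in some form, e.g.\ through per-class second moments of $x$ and $y$. Dropping it already allows the value $2(n-1)$ at $x=y=\mathbf{1}/\sqrt n$. What remains is a nonconvex quadratically constrained problem over the class proportions. Its global maximum, together with an explicit bound on the discretization error, would have to be certified rigorously within an absolute margin of $0.0185/7\approx0.0026$ of the true value $8/7$. None of this is formulated or checked.

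There are also two smaller errors.
\begin{itemize}
\item $\lambda_1^2+\lambda_2^2\le n(n-1)$ gives only the trivial bound $\sqrt2\,n$, not Nikiforov's $2n/\sqrt3$. That bound needs more input, e.g.\ $\lambda_2(G)\le-1-\lambda_n(\overline G)$ from Weyl's inequality together with $|\lambda_n(\overline G)|\le\lambda_1(\overline G)$, which gives $\lambda_1^2+3\lambda_2^2<n^2$ when $\lambda_2\ge 0$.
\item Strictness cannot come from $a_{ii}=0$: your sum already omits the diagonal, and Step 1 can hold with equality. Strictness would follow only from certifying a constant strictly below $8.0185/7$.
\end{itemize}
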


Moreover, in \cite{Ebrahimi_Mohar_Nikiforov_Ahmady_2008} the authors constructed an infinite 
family of graphs of order $n = 7k$ which have spectral sum $8k -2 = \frac{8}{7}n-2$ and 
suggested that $8n/7$ is the correct upper bound for the spectral sum. In this paper, we prove their conjecture.  

\begin{theorem}\label{thm:upper_bound_8_over_7_graph} 
For any $G\in \mathcal{G}(n)$, we have
\[
\lambda_1(G) + \lambda_2(G) \leq \frac{8}{7} n.
\]
\end{theorem}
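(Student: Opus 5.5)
We will derive the statement from a continuous version for graphons. To a graph $G$ on $n$ vertices associate its step graphon $W_G$. The spectrum of the integral operator $T_{W_G}$ consists of the eigenvalues of $A(G)$ divided by $n$. So it suffices to prove
\[
\lambda_1(W)+\lambda_2(W)\le \tfrac{8}{7}
\]
for every graphon $W\colon[0,1]^2\to[0,1]$. Strictly this covers every symmetric measurable kernel with values in $[0,1]$; this class is closed in the cut norm, and $\lambda_1,\lambda_2$ are continuous in cut distance. By compactness of the graphon space, the supremum of $\lambda_1+\lambda_2$ is attained at some $W^\ast$.

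The next step is to reduce the optimization to a finite-dimensional one. Write the top two eigenfunctions as $f,g$ with $\|f\|=\|g\|=1$ and $f\perp g$. Then
\[
\lambda_1+\lambda_2=\max_{f\perp g}\iint W(x,y)\,\bigl(f(x)f(y)+g(x)g(y)\bigr)\,dx\,dy,
\]
by Ky Fan's variational principle. For fixed $f,g$ this expression is linear in $W$, so the optimal $W$ is $\{0,1\}$-valued: $W(x,y)=1$ exactly when $f(x)f(y)+g(x)g(y)>0$. In other words, map each point $x$ to $v(x)=(f(x),g(x))\in\mathbb{R}^2$; the optimum is a threshold graphon where $x\sim y$ iff $\langle v(x),v(y)\rangle>0$.

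Conversely, $f$ and $g$ must be eigenfunctions of this $W$. This self-consistency lets us use convexity and rotational structure. The angles of the vectors $v(x)$ determine adjacency, through angular separation less than $\pi/2$. A convex-geometry argument (Carathéodory plus perturbation) should show that the optimal distribution of angles is supported on finitely many directions. The target is a bound on the number of directions, ideally around 7, consistent with the extremal $7k$-vertex construction.

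The reduced problem becomes the following. Choose weights $w_1,\dots,w_m\ge0$ with $\sum w_i=1$ and a $0/1$ "angular" pattern $B$. Maximize the sum of the top two eigenvalues of $D^{1/2}BD^{1/2}$, where $D=\operatorname{diag}(w)$. The sum of the top two eigenvalues equals the largest eigenvalue of the second exterior (compound) power $\wedge^2(D^{1/2}BD^{1/2})$. This converts the spectral sum into a spectral radius, which we can certify via an inequality of the form
\[
\tfrac{8}{7}\Bigl(\sum w_i\Bigr)I-\wedge^2\bigl(D^{1/2}BD^{1/2}\bigr)\succeq0.
\]
We prove this by exhibiting a matrix sum-of-squares decomposition, polynomial in the $\sqrt{w_i}$, for each surviving combinatorial pattern $B$.

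The main obstacle is the reduction to finitely many directions with a manageable set of patterns, together with finding explicit SOS certificates. I would first try to show that, at an optimum, consecutive angles around the circle create a circulant-like adjacency pattern; symmetry would then collapse the cases to a few, and the SOS certificates would be found numerically and rationalized.
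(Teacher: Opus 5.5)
Your outline has the same architecture as the paper's proof: extremal graphon by compactness, $0/1$ threshold adjacency, Carath\'eodory to reach finitely many steps, the weighted quotient matrix $D_u^{1/2}AD_u^{1/2}$, the second additive compound, and a matrix SOS certificate. However, the step that carries the argument is only asserted: ``a convex-geometry argument (Carath\'eodory plus perturbation) should show that the optimal distribution of angles is supported on finitely many directions.'' This is exactly where the orthogonality constraint makes naive modifications fail. Replacing $f,g$ on a set by finitely many values must do all of the following at once:
\begin{itemize}
\item preserve $\|f\|_2$, $\|g\|_2$ and $\langle f,g\rangle$;
\item not lower $fWf+gWg$;
\item leave $\tilde f,\tilde g$ as the top two eigenfunctions of the modified graphon, so the process yields an extremal object you can analyse.
\end{itemize}
You never say which moments are matched or why the quadratic forms survive. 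Without a bound on the number of directions, your final step (an SOS certificate ``for each surviving pattern $B$'') is not a finite verification. There is a smaller gap earlier as well. With the strict rule $W=1$ iff $f(x)f(y)+g(x)g(y)>0$, the set $\{\kappa=0\}$ is left undetermined. Equality in Ky Fan only tells you that $\mathrm{span}\{f,g\}$ is a top eigenspace, not that $f$ and $g$ are individually eigenfunctions. So some tie-break is needed.

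The paper closes these gaps with a specific chain of steps.
\begin{itemize}
\item It fixes $W$ extremal with $\mu_1$ maximal among extremal graphons. This forces $W=1$ on $\{\kappa=0\}$ and is reused below.
\item A stretching perturbation of $[0,1]$ (enlarge an interval $I$, shrink an interval $J$) is followed by Gram--Schmidt $\tilde h=\tilde g-\alpha\tilde f$. The Rayleigh quotient of $\tilde h$ is at least $\mu_2/\|\tilde g\|_2^2$, because the cross term between $\tilde f$ and $\tilde g$ vanishes. This yields the ellipse equation $\mu_1f^2+\mu_2g^2\equiv\mu_1+\mu_2$ a.e., so $g^2$ is determined by $f^2$.
\item Cloning is done only inside $U=S^+(g)\cup S^0(g)$, and afterwards inside $S^-(g)\cup S^0(g)$. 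There $W\equiv1$ on $U\times U$, so that block is unchanged. Carath\'eodory in $\mathbb{R}^2$ matches the averages of $f^2$ and $fg$ over $U$ with three points, and the ellipse equation then gives $\|\tilde g\|_2=1$ for free.
\item A block computation gives $\tilde f\widetilde W\tilde f=\mu_1+\bigl(\int_U f-\sum_i u_i f(x_i)\bigr)^2$, and likewise for $g$. Extremality therefore forces the linear moments to agree. The tie-break then makes $\tilde f,\tilde g$ eigenfunctions of the new extremal graphon.
\item This gives at most $6$ steps. Monotonicity of neighbourhoods plus true-twin-freeness (twins would receive equal values from the eigen-equations) leaves only $H_6$ and its induced subgraphs $P_3,P_4,H_5$. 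A single SOS certificate with $u_i\ge 0$ then covers every case.
\end{itemize}

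Several of your heuristics also point the wrong way. Since $f\ge 0$, the vectors $v(x)$ lie in a half-plane. So in angular order the pattern is a proper-interval (consecutive-ones) pattern, not a circulant one. The right count is at most $3$ directions on each side of the $f$-axis, not about $7$. The extremal graph, the join of $K_{3k}$ with $K_{2k}\cup K_{2k}$, has only $3$ steps, of measures $3/7,2/7,2/7$; the $7$ in $7k$ is just a vertex count. Finally, $\wedge^2 M$ usually denotes the multiplicative compound, whose eigenvalues are $\lambda_i\lambda_j$. To turn $\lambda_1+\lambda_2$ into a top eigenvalue you need the additive compound $P^{\mathrm T}(M\otimes I+I\otimes M)P$.
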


We prove Theorem \ref{thm:upper_bound_8_over_7_graph} by translating the problem to graphons and 
then utilizing analytic tools to show that the spectral sum of graphons is at most $8/7$ 
(Theorem \ref{thm:upper_bound_8_over_7_graphon}). Our proof is inspired by Terpai's \cite{Terpai_2011} 
(cf. \cite{Liu_2024}) maximization proof of the sum $\lambda_1(G) + \lambda_1(\overline{G})$, 
and Breen, Riasanovsky, Tait and Urschel's \cite{Breen_Riasanovsky_Tait_Urschel_2022} maximization 
proof of the spread $\lambda_1 - \lambda_n$ using graphons. However, there is a significant 
difficulty when considering the spectral sum that we must overcome here which requires novel methods. 
When considering either $\lambda_1(G) + \lambda_1(\overline{G})$ or $\lambda_1 - \lambda_n$, 
both of these maximization problems can be expressed as the maximization of a two-term sum 
of quadratic forms over pairs of unit vectors (or $\mathscr{L}^2$-functions when working with graphons). 
Because of this, one can do alterations to the purported extremal graph (or graphon) 
and its eigenvectors and estimate from below the objective function by considering 
Rayleigh quotients. The major difficulty with the spectral sum is that in this case, 
one has to maximize a two-term sum of quadratic forms over pairs of \emph{orthogonal} 
vectors. The orthogonality condition usually breaks down when one tries to modify the eigenvector(s) 
on a subset of vertices, and so any modification needs to be chosen carefully. We introduce some new 
ideas to overcome this, and we believe that the techniques used in this paper may find applications 
in other problems where one wishes to optimize a sum of quadratic forms over orthogonal vectors.
\medskip

\noindent \textbf{High-level outline of our proof of Theorem \ref{thm:upper_bound_8_over_7_graph}.} We first 
show that it suffices to prove the weaker bound $\frac{8}{7}n + o(n)$ for the spectral sum of 
graphs (Theorem \ref{thm:upper_bound_8_over_7_asymptotic}) in Section \ref{sec:asymptotic_result}. 
We then give a brief introduction to graphons and relate the spectral sum of graphs with that of 
graphons (Lemma \ref{lemma:graph_to_graphon}) in Section \ref{sec:graphon_intro}. Thus, it suffices 
to show that the spectral sum of graphons is at most $8/7$ (Theorem \ref{thm:upper_bound_8_over_7_graphon}). 
We prove the result for graphons in Section \ref{sec:proof_graphon_theorem}. We begin with an 
extremal graphon $W(x,y)$ which has maximum spectral sum and maximum value of its largest eigenvalue 
among all such graphons. We establish the \emph{adjacency criteria} for $W(x,y)$ (Lemma \ref{lemma:adjacency}) which describes 
the points $(x,y)\in [0,1]^2$ where $W(x,y)=1$. Assuming $\mu_1$ and $f$ (resp. $\mu_2$ and $g$) 
denote the largest (resp. the second largest) eigenvalue and the corresponding eigenfunction of $W$, 
we show that $f$ and $g$ satisfy an ellipse equation for a.e. $x\in [0,1]$ (Lemma \ref{lemma:ellipse}). 
Using Carath\'{e}odory's Theorem from convex geometry, we then prove that $f$ and $g$ can be assumed 
to be step-functions with at most $6$ steps a.e. (Lemma \ref{lemma:six_steps_f_g}). We reduce 
the problem to analyzing the spectral sum of weighted graph(s) $G^*$ of order at most $6$ 
(Lemma \ref{lemma:weighted_graph}). Using structural arguments, we determine the possibilities 
for $G^*$ in subsection \ref{sec:structure_G_star} and show that it suffices to upper bound the 
spectral sum of the unique weighted graph $H_6$ shown in Figure \ref{fig:structure_G} (Lemma \ref{lemma:H_6}). 
We prove Lemma \ref{lemma:H_6} by first reducing it to a positive semi-definiteness verifiability 
problem (Lemma \ref{lemma:H_6_x}) using exterior algebra, and then using the matrix sum of squares 
technique from convex optimization to do the verification.  

\section{Asymptotic result suffices} 
\label{sec:asymptotic_result}

We observe the following using a graph-blowup argument originally by Nikiforov (cf. \cite{Csikvari_2009}). 

\begin{lemma}
Suppose for any graph $G\in \mathcal{G}(n)$, we have 
\[
\lambda_1(G) + \lambda_2(G) \le \frac{8n}{7} + o(n).    
\]
Then, Theorem \ref{thm:upper_bound_8_over_7_graph} holds for any $G\in \mathcal{G}(n)$.
\end{lemma}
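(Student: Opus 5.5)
The plan is a blow-up argument. Fix $G\in\mathcal{G}(n)$ and suppose, for contradiction, that $\lambda_1(G)+\lambda_2(G) = \frac{8}{7}n + \varepsilon n$ for some $\varepsilon>0$. For each integer $t\ge 1$, let $G^{(t)}$ be the graph on $tn$ vertices obtained by replacing every vertex $v$ with an independent set $V_v$ of size $t$, and every edge $uv$ with a complete bipartite graph between $V_u$ and $V_v$. Then $A(G^{(t)}) = A(G)\otimes J_t$, where $J_t$ is the all-ones matrix of order $t$.

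The first step is to compute the spectrum of $G^{(t)}$. Since $J_t$ has eigenvalues $t$ (once) and $0$ (with multiplicity $t-1$), the eigenvalues of $A(G)\otimes J_t$ are $t\lambda_i(G)$ for $i=1,\dots,n$, together with $n(t-1)$ zeros.

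The second step is to check that $\lambda_2(G^{(t)}) = t\lambda_2(G)$. The largest eigenvalue is clearly $t\lambda_1(G)$. For the second, there are two cases.
\begin{enumerate}
\item If $\lambda_2(G)\ge 0$, then $t\lambda_2(G)$ is at least every zero eigenvalue and at least every $t\lambda_i(G)$ with $i\ge 2$. Hence it is the second largest eigenvalue.
\item If $\lambda_2(G)<0$, then $G$ has a single nonnegative eigenvalue. This forces $G$ to be a complete multipartite graph with one nontrivial part, i.e., complete plus isolated vertices; in fact $G$ is a complete graph $K_m$ plus isolated vertices. If there are isolated vertices, $0$ is an eigenvalue, a contradiction, so $G=K_n$ with $\lambda_2=-1$. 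Then $\lambda_1+\lambda_2 = n-2 \le \frac{8}{7}n$ directly.
\end{enumerate}
A simpler route to the second case avoids this classification altogether. We may assume $\lambda_2(G)\ge 0$, because otherwise $\lambda_1(G)+\lambda_2(G) < \lambda_1(G) \le n-1 < \frac{8}{7}n$ and we are done. So in all remaining cases $\lambda_1(G^{(t)})+\lambda_2(G^{(t)}) = t(\lambda_1(G)+\lambda_2(G))$.

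The final step applies the hypothesis to $G^{(t)}\in\mathcal{G}(tn)$:
\[
t\Bigl(\frac{8}{7}+\varepsilon\Bigr)n \le \frac{8}{7}tn + o(tn).
\]
Dividing by $tn$ gives $\varepsilon \le o(tn)/(tn)$, and letting $t\to\infty$ gives $\varepsilon\le 0$, a contradiction.

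I expect the only delicate point to be the second eigenvalue of the blow-up, namely making sure the new zero eigenvalues do not displace $t\lambda_2(G)$. The reduction to $\lambda_2(G)\ge 0$ via the trivial bound $\lambda_1\le n-1$ handles this.
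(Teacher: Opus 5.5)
Your proof is correct and is the same blow-up argument as the paper's: $G^{(t)}$ has spectrum $t\lambda_i(G)$ plus $n(t-1)$ zeros, you apply the hypothesis on $tn$ vertices, divide by $t$, and let $t\to\infty$. Your step disposing of the case $\lambda_2(G)<0$ via $\lambda_1+\lambda_2<\lambda_1\le n-1$ is a worthwhile addition, because the paper asserts $\lambda_2(G^{(t)})=t\lambda_2(G)$ without noting that this needs $\lambda_2(G)\ge 0$; that simpler route also makes your loosely phrased classification in item 2 unnecessary.
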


\begin{proof}
For a graph $G\in \mathcal{G}(n)$, consider $G^{(t)}$, the graph obtained by replacing every 
vertex of $G$ by an empty graph of order $t$, where the copies of two vertices of $G$ are 
adjacent in $G^{(t)}$ if and only if the two vertices are adjacent in $G$. The graph $G^{(t)}$ 
is called the \emph{(open)}-$t$-\emph{blowup} of $G$. It is known (see \cite{Nikiforov_2006}) 
that the eigenvalues of $G^{(t)}$ are precisely $t\lambda_i(G)$ $(i = 1, \ldots, n)$ and $n(t-1)$ 
additional $0$'s. By assumption, we have 
\[ 
\lambda_1(G^{(t)}) + \lambda_2(G^{(t)}) \leq \frac{8tn}{7} + o(tn).
\]
Since $\lambda_1(G^{(t)}) = t\lambda_1(G)$ and $\lambda_2(G^{(t)}) = t\lambda_2(G)$, it follows that
\[ 
t\lambda_1(G) +  t\lambda_2(G) \leq \frac{8tn}{7} + o(tn).
\]
Dividing by $t$ and considering the limit as $t\rightarrow\infty$, we get
\[ 
\lambda_1(G) + \lambda_2(G) \leq \frac{8n}{7},
\] 
completing the proof of the lemma. 
\end{proof}

Thus, in order to prove Theorem \ref{thm:upper_bound_8_over_7_graph}, it is sufficient 
to prove the following asymptotic result. 

\begin{theorem}\label{thm:upper_bound_8_over_7_asymptotic} 
For any graph $G\in \mathcal{G}(n)$, we have 
\[
\lambda_1(G) + \lambda_2(G) \le \frac{8n}{7} + o(n).    
\]
\end{theorem}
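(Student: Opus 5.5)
We will deduce the statement from the graphon version, Theorem~\ref{thm:upper_bound_8_over_7_graphon}: every graphon $W$ has $\mu_1(W)+\mu_2(W)\le 8/7$, where $\mu_1\ge\mu_2$ are the two largest eigenvalues of the integral operator $T_W$ on $\mathscr{L}^2[0,1]$. The argument is by contradiction plus compactness. Suppose the statement fails. Then there are $\varepsilon>0$ and graphs $G_k\in\mathcal{G}(n_k)$ with $n_k\to\infty$ and
\[
\lambda_1(G_k)+\lambda_2(G_k)\ge \Bigl(\frac{8}{7}+\varepsilon\Bigr)n_k .
\]
Let $W_{G_k}$ be the step graphon associated with $G_k$: split $[0,1]$ into $n_k$ intervals of length $1/n_k$ and put $W=1$ on $I_i\times I_j$ when $ij\in E(G_k)$.

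\textbf{Step 1: scaling.} The nonzero spectrum of $T_{W_{G_k}}$ is exactly $\{\lambda_i(G_k)/n_k\}$. To see this, note that step functions on the partition are invariant under $T_{W_{G_k}}$, and that this operator acts on them as $A(G_k)/n_k$; on the orthogonal complement it is zero. Hence $\mu_1+\mu_2$ of $W_{G_k}$ is at least $(\lambda_1(G_k)+\lambda_2(G_k))/n_k$. One small point needs care: if $\lambda_2(G_k)<0$, the second eigenvalue of the operator is $0$ rather than $\lambda_2/n_k$, but $0>\lambda_2/n_k$, so the inequality still holds. This is the content of Lemma~\ref{lemma:graph_to_graphon}. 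It follows that $\mu_1(W_{G_k})+\mu_2(W_{G_k})\ge 8/7+\varepsilon$.

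\textbf{Step 2: compactness and continuity.} By Lov\'asz--Szegedy compactness of the graphon space under the cut distance $\delta_\square$, a subsequence converges to some graphon $W$. Eigenvalues are continuous in $\delta_\square$: the cut norm controls the $\mathscr{L}^\infty\to\mathscr{L}^1$ norm, and hence, for kernels bounded by $1$, the operator norm $\|T_U\|_{2\to 2}\le \sqrt{8\|U\|_\square}$ after measure-preserving rearrangement. By Weyl's inequality for compact self-adjoint operators, each individual eigenvalue $\mu_i$ is therefore continuous. So $\mu_1(W)+\mu_2(W)\ge 8/7+\varepsilon$, which contradicts Theorem~\ref{thm:upper_bound_8_over_7_graphon}. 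It is equally possible to avoid compactness altogether: take a sequence of graphs whose normalized spectral sum tends to the limsup and pass to a limit graphon.

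\textbf{Main obstacle.} The real work is not in this reduction but in Theorem~\ref{thm:upper_bound_8_over_7_graphon} itself. That proof proceeds through the adjacency criteria, the ellipse equation, the six-step reduction and the analysis of $H_6$. Within the reduction above, the only delicate points are:
\begin{enumerate}
\item the continuity of the \emph{second} eigenvalue under cut-distance convergence, which requires the operator-norm bound together with the min--max characterization rather than continuity of $\mu_1$ alone;
\item the sign convention for $\mu_2$ when the operator has fewer than two positive eigenvalues.
\end{enumerate}
I expect to handle both by stating Lemma~\ref{lemma:graph_to_graphon} in the form "$\lambda_1(G)+\lambda_2(G)\le n(\mu_1+\mu_2)(W_G)$", and by citing the standard spectral convergence results for graphons.
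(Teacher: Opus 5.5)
Your proposal is correct and follows the paper's route: the step-graphon comparison of Lemma~\ref{lemma:graph_to_graphon}, combined with Theorem~\ref{thm:upper_bound_8_over_7_graphon}. You use that lemma in the inequality form $\lambda_1(G)+\lambda_2(G)\le n\,\sigma(\chi_G)$, which is the right formulation, since for $G=K_n$ ($n\ge 2$) one has $\lambda_2=-1$ while $\mu_2(\chi_G)=0$. Your Step~2 is superfluous, however: $W_{G_k}$ is itself a graphon, so Step~1 together with Theorem~\ref{thm:upper_bound_8_over_7_graphon} already gives $\lambda_1(G)+\lambda_2(G)\le \frac{8}{7}n$ for every single $G$, with no error term and no need for a contradiction sequence, compactness, or continuity of $\mu_2$.
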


\section{Enter graphons}
\label{sec:graphon_intro}

Graphons are analytic objects that generalize graphs, originally defined in an influential 
paper by Lov\'{a}sz and Szegedy \cite{Lovasz_Szegedy_2006} and the monograph by Lov\'{a}sz \cite{Lovasz_2012} (cf. \cite{Borgs_Chayes_Lovasz_Sos_Vesztergombi_2012}). 
We first give the necessary background on graphons, which is needed for further development in this paper.  

A \emph{graphon} is a symmetric Lebesgue measurable function $K:[0,1]^2 \rightarrow [0,1]$. 
We denote the set of graphons by $\mathcal{K}$. A graphon $K\in \mathcal{K}$ is 
called \emph{step-graphon} if there exists a partition $U_1, \ldots, U_k$ of $[0,1]$ 
such that $K$ is constant on $U_i\times U_j$ for all $i,j$. If $U\subseteq [0,1]^2$ 
is a symmetric (i.e., $(x,y)\in U$ if and only if $(y,x)\in U$) measurable set, 
then its indicator function $\chi_U$, defined by 
\[
\chi_U(x,y) = 
\begin{cases}
    1, & \text{if }(x,y) \in U; \\
    0, & \text{otherwise},
\end{cases}
\]
is clearly a step-graphon. A graphon generalizes the notion of a graph as follows: 
given a (unweighted) graph $G\in \mathcal{G}(n)$ with vertex set $V(G)=\{1, \ldots, n\}$, 
one can naturally define a subset
\[ 
\bigcup_{\substack{u\sim v\\ u,v\in V(G)}} \left[\frac{u-1}{n}, \frac{u}{n}\right]\times 
\left[\frac{v-1}{n}, \frac{v}{n}\right]\subseteq [0,1]^2,
\]
which is symmetric and measurable, whose indicator function (which we denote by $\chi_G$ for brevity) 
is a step-graphon. Clearly, for a given graph $G$, many graphons can be associated corresponding to 
different vertex labellings of $G$. Hence, an equivalence relation is defined on the set of graphons 
$\mathcal{K}$ using the \emph{cut norm} defined below so that for any graph $G$ there is a unique 
(up to isomorphism) equivalence class of graphons corresponding to $G$.
 
For $K\in \mathcal{K}$, its \emph{cut norm} $\| K \|_{\Box}$ is defined as 
\[ 
\| K \|_{\Box} = \sup_{S, T \subseteq [0,1]} \left| \int_{S\times T} K(x,y) \dif{x}\dif{y}\right|.
\]
Let $\delta_{\Box}$ be the \emph{cut semidistance} on $\mathcal{K}$ given by 
\[
\delta_{\Box}(U, W) = \inf\{\|U - W^{\varphi} \|_{\Box} : \varphi\ \text{is a measure-preserving bijection on}\ [0,1]\},
\]
where $U, W\in \mathcal{K}$, and $W^{\varphi}(x,y) = W(\varphi(x), \varphi(y))$. Define 
a relation $\sim$ on $\mathcal{K}$ as follows: 
$U \sim W$ if and only if $\delta_{\Box}(U, W) = 0$. 
It is known that $\sim$ is an equivalence relation and the quotient space $\mathcal{K}/\sim$ 
is a compact metric space, see \cite[Theorem 5.1]{Lovasz_Szegedy_2007}. 

For a measurable subset $U\subseteq [0,1]^2$, we will denote its Lebesgue measure by $m(U)$. 
We say that two measurable subsets of $[0,1]^2$ are equal \emph{almost everywhere} (a.e. for short) 
if their symmetric difference has measure zero. Consider the Hilbert space $\mathscr{L}^2[0,1]$ 
of square-integrable functions from $[0,1]$ to $\mathbb{R}$ with inner product given by 
$\langle f,g\rangle = \int_0^1 f(x)g(x)\dif{x}$. For $K\in \mathcal{K}/\sim$, define the 
Hilbert-Schmidt operator $A_K: \mathscr{L}^2[0,1]\rightarrow \mathscr{L}^2[0,1]$ by 
\[ 
(A_Kf)(x) = \int_0^1 K(x,y)f(y)\dif{y}
\]
for $f\in \mathscr{L}^2[0,1]$ and a.e. $x\in [0,1]$. As $K$ is symmetric and bounded, 
it is known that $A_K$ is a compact Hermitian operator (cf. \cite{Aubin_2000}). 
This means $A_K$ has a discrete, real spectrum with only possible accumulation point $0$. 
For two functions $f,g\in \mathscr{L}^2[0,1]$ and $K\in \mathcal{K}/\sim$, we define 
\[ 
fKg : = \int_0^1\!\int_0^1 K(x,y)f(x)g(y)\dif{x}\dif{y}.
\]

Recall the well-known Min-Max Theorem that relates eigenvalues to quadratic forms. 

\begin{theorem}[Min-Max Theorem]\label{thm:min_max} 
Let $A$ be a compact Hermitian operator on a Hilbert space $H$ with inner product 
$\langle\,\cdot\,, \,\cdot\,\rangle$. Let $U$ denote a subspace of $H$. 
Then the $k$-th largest eigenvalue $\lambda_k$ of $A$ is given by
\[ 
\lambda_k = \max_{\substack{U\\ \dim(U)=k}} ~ \min_{\substack{z\in U\\ \|z\|=1}} \langle z, A z\rangle.
\]
Moreover, if $z_1, \ldots, z_k\in H$ are mutually orthonormal vectors such that 
$\lambda_i = \langle z_i, A z_i\rangle$ for $i=1, \ldots, k$, then $z_i$ is a $\lambda_i$-eigenvector for $A$.  
\end{theorem}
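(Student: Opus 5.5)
We prove the Min-Max Theorem via the spectral theorem for compact self-adjoint operators. Write $A = \sum_j \lambda_j \langle \cdot, e_j\rangle e_j$ on $H$, where $\{e_j\}$ is an orthonormal family of eigenvectors with eigenvalues listed in nonincreasing order (counted with multiplicity). The operator vanishes on the orthogonal complement of $\overline{\mathrm{span}}\{e_j\}$. We treat the nonnegative part of the spectrum, which is where $\lambda_k$ is attained as the $k$-th largest eigenvalue; the value $0$ is handled by the kernel in the usual way.

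\textbf{Lower bound.} Take $U=\mathrm{span}\{e_1,\dots,e_k\}$. For a unit vector $z=\sum_{i\le k} c_i e_i$ we have $\langle z,Az\rangle=\sum_{i\le k}\lambda_i c_i^2\ge \lambda_k$. Hence the max-min is at least $\lambda_k$.

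\textbf{Upper bound.} Let $U$ be any $k$-dimensional subspace. The subspace $V=\overline{\mathrm{span}}\{e_j: j\ge k\}\oplus \ker A$ has codimension $k-1$, so by a dimension count $U\cap V\ne\{0\}$. Pick a unit vector $z$ in this intersection. Then
\[
\langle z,Az\rangle=\sum_{j\ge k}\lambda_j |\langle z,e_j\rangle|^2\le \lambda_k .
\]
In the case $\lambda_k \ge 0$ we use $\lambda_j\le\lambda_k$ for $j\ge k$, and any kernel contribution is $0\le\lambda_k$. So the minimum over $U$ is at most $\lambda_k$, and the maximum is attained by the $U$ constructed above.

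\textbf{The ``moreover'' part.} We argue by induction on $i$, and this is the delicate step. Suppose $z_1,\dots,z_{i-1}$ are already known to be eigenvectors for $\lambda_1,\dots,\lambda_{i-1}$. Since $z_i$ is orthogonal to each of them, $z_i$ lies in $W_i=\{z_1,\dots,z_{i-1}\}^\perp$, which is $A$-invariant. The eigenvalues of $A|_{W_i}$ are those of $A$ with $\lambda_1,\dots,\lambda_{i-1}$ removed, so the largest is $\lambda_i$. Therefore $\langle z,Az\rangle\le\lambda_i$ for all unit $z\in W_i$, and $z_i$ attains this supremum.

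It remains to show a maximizer of the Rayleigh quotient is an eigenvector. For any $w\in W_i$, the function
\[
\varphi(t)=\frac{\langle z_i+tw,\,A(z_i+tw)\rangle}{\|z_i+tw\|^2}
\]
has a maximum at $t=0$, so $\varphi'(0)=0$. Computing the derivative gives
\[
2\langle w,Az_i\rangle-2\lambda_i\langle w,z_i\rangle=0 .
\]
Thus $Az_i-\lambda_i z_i\perp W_i$. But $Az_i-\lambda_i z_i$ also lies in $W_i$, because $W_i$ is $A$-invariant. Hence $Az_i=\lambda_i z_i$, which completes the induction.
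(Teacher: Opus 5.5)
The paper gives no proof of this statement; it simply recalls the Courant--Fischer min-max theorem as well known. Your argument is the standard textbook proof, and it is correct.

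The subspace $\mathrm{span}\{e_1,\dots,e_k\}$ gives the lower bound. Intersecting an arbitrary $k$-dimensional $U$ with the codimension-$(k-1)$ subspace $\{e_1,\dots,e_{k-1}\}^\perp$ gives the upper bound. In the ``moreover'' part the induction is sound: $W_i=\{z_1,\dots,z_{i-1}\}^\perp$ is $A$-invariant, and $\lambda_i$ is the supremum of the Rayleigh quotient on $W_i$. So $Az_i-\lambda_i z_i$ is both orthogonal to $W_i$ and contained in it, hence zero.

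Three small points of hygiene.
\begin{itemize}
\item If $A$ has infinitely many positive eigenvalues and some negative ones, the eigenvalues cannot literally be listed as a nonincreasing sequence. Read $\{e_j : j\ge k\}$ as ``all eigenvectors other than $e_1,\dots,e_{k-1}$''. Then $V=\{e_1,\dots,e_{k-1}\}^\perp$, and your estimate $\lambda_j\le\lambda_k$ is unaffected.
\item The step ``$\langle z,Az\rangle\le\lambda_i$ for unit $z\in W_i$'' is the case $k=1$ of your first part applied to $A|_{W_i}$. It therefore carries the same restriction $\lambda_i\ge 0$ that you imposed there. This reflects a looseness in the statement itself, not a gap in your argument: in infinite dimensions a ``$k$-th largest eigenvalue'' below $0$ need not exist. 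It is harmless for the paper's applications.
\item Your derivative computation is written for a real Hilbert space, which is the paper's setting $\mathscr{L}^2[0,1]$. Over $\mathbb{C}$ you would also test with $iw$ to dispose of the imaginary part.
\end{itemize}
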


Let $\mu_1(K)$ and $\mu_2(K)$ denote the largest and the second largest eigenvalue of $A_K$. 
Then, by Theorem \ref{thm:min_max}, we have   
\begin{equation}\label{eq:min-max-graphon}
 \mu_1(K) = \max_{\substack{f\in \mathscr{L}^2[0,1] \\ \| f\|_2=1}} fKf \quad \text{and}\quad
 \mu_2(K) = \max_{\substack{U\subset \mathscr{L}^2[0,1]\\ \dim(U)=2}} ~ \min_{\substack{g\in U \\ \| g\|_2=1}} gKg.
\end{equation}
Furthermore, if $f\in \mathscr{L}^2[0,1]$ is a unit function such that $\mu_1(K) = fKf$, 
then $f$ is a $\mu_1(K)$-eigenfunction of $K$ (up to a set of measure zero). Similarly, 
if $f$ is a $\mu_1(K)$-eigenfunction and $g\in \mathscr{L}^2[0,1]$ is a unit function 
orthogonal to $f$ such that $\mu_2(K) = gKg$, then $g$ is a $\mu_2(K)$-eigenfunction 
of $K$. Also, one can always choose a $\mu_1(K)$-eigenfunction $f$ which is non-negative. 

We define $\sigma(K): = \mu_1(K) + \mu_2(K)$ and we call it the \emph{spectral sum} 
of the graphon $K$. We note the following easy consequence of Theorem \ref{thm:min_max}, 
which we will use repeatedly.

\begin{lemma}\label{lemma:spectral_sum_graphon} 
For $K\in \mathcal{K}/\sim$, we have
\[
\sigma(K) = \mu_1(K) + \mu_2(K) = \max_{\substack{\| f\|_2=\| g\|_2 = 1\\ \langle f,g\rangle = 0}} fKf + gKg.
\]
\end{lemma}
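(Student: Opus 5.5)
We prove the two inequalities separately; this is the Ky Fan maximum principle specialized to $k=2$ for the compact Hermitian operator $A_K$.

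For the lower bound ($\le$ direction of the maximum), let $f$ be a unit $\mu_1(K)$-eigenfunction. By the min-max characterization \eqref{eq:min-max-graphon}, $\mu_2(K)$ is attained by a two-dimensional subspace, namely one spanned by $f$ and a unit $\mu_2(K)$-eigenfunction $g$ orthogonal to $f$. Such a $g$ exists by the spectral theorem for compact Hermitian operators. If $\mu_1(K)$ has multiplicity at least two, we take $g$ to be a second orthonormal eigenfunction in the same eigenspace. Then $fKf=\langle f,A_Kf\rangle=\mu_1(K)$ and $gKg=\mu_2(K)$, so the maximum is at least $\sigma(K)$. The maximum is attained at this pair, which justifies writing $\max$.

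For the upper bound, take any orthonormal pair $f,g$ and set $U=\mathrm{span}\{f,g\}$. Let $P$ be the orthogonal projection onto $U$. The compression $B=PA_KP|_U$ is a symmetric operator on the two-dimensional space $U$, and in the basis $\{f,g\}$ it is represented by the $2\times 2$ matrix
\[
\begin{pmatrix} fKf & fKg\\ gKf & gKg\end{pmatrix}.
\]
Its trace is $fKf+gKg$, which equals $\beta_1+\beta_2$, where $\beta_1\ge\beta_2$ are the eigenvalues of $B$. We then bound each eigenvalue of $B$ by the corresponding eigenvalue of $A_K$:
\begin{itemize}
\item $\beta_1=\max_{z\in U,\|z\|=1}\langle z,A_Kz\rangle\le\mu_1(K)$, by \eqref{eq:min-max-graphon}.
\item $\beta_2=\min_{z\in U,\|z\|=1}\langle z,A_Kz\rangle\le\mu_2(K)$, since $U$ is one admissible two-dimensional subspace in the max-min formula of Theorem \ref{thm:min_max}.
\end{itemize}
Adding the two inequalities gives $fKf+gKg\le\sigma(K)$.

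The only point requiring care is the identity $\beta_1+\beta_2=\operatorname{tr}B=fKf+gKg$ together with the identification of $\beta_2$ as the minimum Rayleigh quotient over $U$. Both follow from diagonalizing the real symmetric $2\times2$ matrix above. Writing a unit $z\in U$ in an orthonormal eigenbasis of $B$ gives $\langle z,A_Kz\rangle=\langle z,Bz\rangle$, since $Pz=z$, and this is a convex combination of $\beta_1$ and $\beta_2$. Hence the minimum over unit $z\in U$ is $\beta_2$ and the maximum is $\beta_1$.
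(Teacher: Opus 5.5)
Your proof is correct and is the argument the paper intends when it calls this lemma an easy consequence of Theorem \ref{thm:min_max}; the paper writes no further detail. Eigenfunctions give the lower bound, and compressing $A_K$ to $\mathrm{span}\{f,g\}$ and bounding its two eigenvalues by $\mu_1(K)$ and $\mu_2(K)$ via Theorem \ref{thm:min_max} gives the upper bound. Attaining the maximum, rather than a supremum, still needs $\mu_2(K)$ to be a genuine eigenvalue, which the spectral theorem alone does not give (e.g.\ only one positive eigenvalue and trivial kernel); this assumption is shared with the paper's own $\max$ in Theorem \ref{thm:min_max} and its definition of $\mu_2(K)$.
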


The following result shows that $\mu_1$ and $\mu_2$ are continuous functions on 
the space $K/ \sim$ w.r.t. $\delta_\Box$. 

\begin{theorem}[cf. {\cite[Theorem 6.6]{Borgs_Chayes_Lovasz_Sos_Vesztergombi_2012}} or {\cite[Theorem 11.54]{Ore_1962}}]
Let $\{K_i\}_{i\in \mathbb{N}}$ be a sequence of graphons converging to $K$ with 
respect to $\delta_\Box$. Then as $i\rightarrow \infty$,  
\[
\mu_1(K_i) \rightarrow \mu_1(K)  \quad \text{and} \quad \mu_2(K_i) \rightarrow \mu_2(K).
\]
\end{theorem}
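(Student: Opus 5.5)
The plan is to derive the statement from two ingredients. The first is that eigenvalues of compact self-adjoint operators are controlled by operator-norm perturbations. The second is that cut-distance convergence of graphons implies convergence of the associated integral operators in a norm strong enough to control spectra. Since $\mu_1,\mu_2$ depend only on the equivalence class, and $A_{K^{\varphi}}$ is unitarily equivalent to $A_K$ via $f\mapsto f\circ\varphi$, we may choose measure-preserving bijections $\varphi_i$ with $\|K_i^{\varphi_i}-K\|_\Box\to 0$ and replace $K_i$ by $K_i^{\varphi_i}$. So we assume $\|K_i-K\|_\Box\to0$ directly.

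The first step is the standard comparison between the cut norm and the $\mathscr{L}^\infty\to\mathscr{L}^1$ operator norm. For a symmetric kernel $U$ with $|U|\le 1$, one has $\sup_{\|f\|_\infty,\|g\|_\infty\le1}|fUg|\le 4\|U\|_\Box$, obtained by splitting $f$ and $g$ into positive and negative parts and writing each as an integral of indicator functions of level sets. This does not give operator-norm convergence on $\mathscr{L}^2$. However, for bounded kernels, $\|A_U\|_{2\to2}\le \|A_U\|_{\mathrm{HS}}$ and a Schatten-4 estimate $\|A_U\|_{S_4}^4=t(C_4,U)$ are available. The counting lemma then gives $|t(C_4,K_i-K)|\le 4\cdot 2^4\|K_i-K\|_\Box\to0$ after expanding into homomorphism densities with mixed entries; more precisely, one uses the cycle-counting bound $t(C_4,U)\le 4\|U\|_\Box$ for $\|U\|_\infty\le1$, up to a constant. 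Hence $\|A_{K_i}-A_K\|_{2\to2}\le \|A_{K_i-K}\|_{S_4}\le (c\|K_i-K\|_\Box)^{1/4}\to 0$.

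The second step applies Weyl's inequality, which follows from Theorem \ref{thm:min_max}. For compact Hermitian $A,B$ and each $k$, $|\lambda_k(A)-\lambda_k(B)|\le\|A-B\|_{2\to2}$: for any subspace $U$ and unit $z\in U$, we have $\langle z,Az\rangle\le\langle z,Bz\rangle+\|A-B\|$, so the max--min formula transfers. The one subtlety is that $\mu_k$ is the $k$-th largest eigenvalue and may be nonpositive, with $0$ possibly an accumulation point. We therefore define $\lambda_k$ through the max--min value itself, which is what (\ref{eq:min-max-graphon}) does, and the inequality holds for that quantity. Taking $k=1,2$ gives $\mu_k(K_i)\to\mu_k(K)$.

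The main obstacle is the $C_4$ counting bound $t(C_4,U)\le c\|U\|_\Box$ for symmetric $U$ with values in $[-1,1]$. I would prove it by writing $t(C_4,U)=\int\!\!\int (\int U(x,y)U(y,z)\,dy)^2\,dx\,dz$ and bounding it by $\int\!\!\int |U\circ U|$. For fixed $x,z$, the inner integral is $fUg$-type with bounded $f=U(x,\cdot)$ replaced appropriately. Alternatively, one applies the telescoping counting lemma edge by edge: replacing one $U$ factor at a time, each step is bounded by $\sup_{\|f\|_\infty,\|g\|_\infty\le1}|fUg|\le4\|U\|_\Box$, since the remaining factors integrate to bounded functions of the two endpoints of that edge. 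This gives $t(C_4,U)\le 16\|U\|_\Box$, as in Lov\'asz's monograph.
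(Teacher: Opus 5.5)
The paper gives no proof of this statement. It is imported as a black box from the references cited in its header, so there is no internal argument to compare yours against.

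Your proposal is a correct, self-contained proof along the standard operator-theoretic route:
\begin{itemize}
\item Relabel so that $\|K_i^{\varphi_i}-K\|_\Box\to 0$. This is legitimate because $h\mapsto h\circ\varphi$ is a unitary intertwining $A_{K^{\varphi}}$ and $A_K$.
\item Bound $\|A_U\|_{2\to2}\le t(C_4,U)^{1/4}$ using $\tr(A_U^4)=\sum_j\lambda_j^4=t(C_4,U)$.
\item Control $t(C_4,U)$ by the cut norm when $\|U\|_\infty\le 1$.
\item Conclude with Weyl's inequality obtained from the max--min formula. Here $\mu_k$ is read as the sup--inf value, which is the right convention when a graphon has fewer than $k$ positive eigenvalues.
\end{itemize}
Compared with the bare citation, this buys a quantitative estimate: $|\mu_k(U)-\mu_k(W)|\le (c\,\delta_\Box(U,W))^{1/4}$ for every $k$, with an absolute constant $c$. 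In particular it gives directly the invariance $\mu_k(K)=\mu_k(\widetilde K)$ for $K\sim\widetilde K$, which the paper derives from this theorem.

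Two sentences in your last paragraph need tightening, though neither is a real gap.

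First, for fixed $x,z$ the integral $\int U(x,y)U(y,z)\,dy$ is not a bilinear form of the type $aUb$. Fix only $x$ instead. With $s_x=\mathrm{sign}\bigl((U\circ U)(x,\cdot)\bigr)$ one gets $\int|(U\circ U)(x,z)|\,dz=\iint U(x,y)U(y,z)s_x(z)\,dy\,dz\le 4\|U\|_\Box$. Since $(U\circ U)^2\le|U\circ U|$, this yields $t(C_4,U)\le 4\|U\|_\Box$.

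Second, in the telescoping version the remaining factors must not be integrated out first. Doing so produces a general bounded kernel $h(x,y)$, and $\int Uh$ is not controlled by $\|U\|_\Box$ (take $h=\mathrm{sign}\,U$). The correct step is:
\begin{itemize}
\item fix the two vertices off the edge;
\item note that the other three factors become $c\,a(x)b(y)$ with $|a|,|b|,|c|\le1$;
\item apply the $\infty\to1$ bound;
\item only then integrate over the fixed vertices.
\end{itemize}
Telescoping from $U$ to $0$ leaves a single nonzero term, so the constant is $4$ rather than $16$.
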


So if $K \sim \widetilde K$, then $\mu_1(K) = \mu_1(\widetilde{K})$ and $\mu_2(K) = \mu_2(\widetilde{K})$. 
By compactness of the quotient space $K \backslash \sim$, the following optimization problem is well defined: 
\[
\max_{K \in K \backslash \sim} \sigma(K).
\]
In particular, there exists a graphon (which we call \emph{extremal graphon}) that achieves the maximum. 

As expected, the spectral sum of a graph $G$ is related to the spectral sum of its 
corresponding graphon $\chi_G$ as described in the following lemma. It can be proved 
by replacing the eigenvectors of $G$ with their corresponding step-functions, 
see \cite{Liu_2024} or \cite{Terpai_2011} for details.

\begin{lemma}\label{lemma:graph_to_graphon} For a graph $G$ of order $n$, we have 
\[
\lambda_1(G) = n\mu_1(\chi_G)\quad \text{ and }\quad\lambda_2(G) = n\mu_2(\chi_G).
\]
\end{lemma}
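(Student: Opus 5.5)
The plan is to show that $A_{\chi_G}$ acts on the $n$-dimensional subspace of step functions exactly as $\frac1n A(G)$ acts on $\mathbb{R}^n$, and that it vanishes on the orthogonal complement of that subspace. The spectrum of $A_{\chi_G}$ is then $\{\lambda_i(G)/n\}$ together with $0$, and the claim follows once we check that the extra zeros do not affect the top two eigenvalues.

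Write $I_u=[\frac{u-1}{n},\frac un]$ for $u\in V(G)$. Let $S\subseteq \mathscr{L}^2[0,1]$ be the space of functions that are constant on each $I_u$; it has dimension $n$. Define $\Phi:\mathbb{R}^n\to S$ by $\Phi(x)=\sum_u x_u\chi_{I_u}$.

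\textbf{Step 1.} Since $\langle \chi_{I_u},\chi_{I_v}\rangle=\delta_{uv}/n$, we have $\langle\Phi x,\Phi y\rangle=\frac1n x^\top y$. Thus $\sqrt n\,\Phi$ is an isometry onto $S$.

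\textbf{Step 2.} For $f\in\mathscr{L}^2[0,1]$ and $x\in I_u$ we compute
\[
(A_{\chi_G}f)(x)=\sum_{v\sim u}\int_{I_v}f(y)\dif y.
\]
This has two consequences. First, the image of $A_{\chi_G}$ lies in $S$. Second, $A_{\chi_G}f=0$ whenever $f\perp S$, because then every integral $\int_{I_v}f$ vanishes. For $f=\Phi(z)$ we get $\int_{I_v}f=z_v/n$, and hence $A_{\chi_G}\Phi(z)=\Phi(\frac1n A(G)z)$.

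\textbf{Step 3.} Let $z_1,\dots,z_n$ be an orthonormal eigenbasis of $A(G)$ with $A(G)z_i=\lambda_i(G)z_i$. Then the functions $\sqrt n\,\Phi(z_i)$ form an orthonormal basis of $S$ consisting of eigenfunctions of $A_{\chi_G}$ with eigenvalues $\lambda_i(G)/n$. Together with the fact that $A_{\chi_G}=0$ on $S^\perp$, this shows that the spectrum of $A_{\chi_G}$ is exactly $\{\lambda_i(G)/n\}_{i=1}^n\cup\{0\}$, counted with multiplicity.

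\textbf{Step 4 (main point).} We must make sure the zeros coming from $S^\perp$ do not push $\lambda_2(G)/n$ out of second place. For this it suffices to show $\lambda_2(G)\ge 0$.

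If $G$ has no edges, then $\lambda_2(G)=0$, and both sides of the claim are $0$.

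If $G$ has an edge $uv$, take $U=\mathrm{span}(e_u,e_v)$. For a unit vector $z\in U$ we have $z^\top A(G)z=2z_uz_v$, whose minimum over $U$ is $-1$, so this subspace does not help directly. Instead use eigenvalue interlacing. If $n\ge 3$ and some vertex $w$ is non-adjacent to $u$ or to $v$, then $G$ contains an induced $K_2\cup K_1$ or an induced $P_3$ (which contains $K_2$); if no such vertex exists, the trace argument below applies. The cleanest route is to use the trace directly: since $\sum_i\lambda_i=\tr A(G)=0$ and $\lambda_1>0$, some eigenvalue is negative. That alone is not enough. We instead use that $\lambda_2<0$ forces $G$ to be complete multipartite with one part, i.e.\ complete. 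This is Smith's theorem: $\lambda_2(G)\le 0$ if and only if $G$ is complete multipartite.

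If $\lambda_2(G)<0$, then $G=K_n$ and $\lambda_2(G)=-1$. In that case the min-max characterisation in Theorem~\ref{thm:min_max} gives $\mu_2(\chi_G)=0\ne -1/n$ when $S^\perp\neq 0$, and the identity fails literally. So the statement must be read with $\lambda_2(G)\ge0$, or with $\lambda_2$ replaced by $\max(\lambda_2,0)$. This does not matter for the application: to prove Theorem~\ref{thm:upper_bound_8_over_7_asymptotic} we only need $\lambda_1(G)+\lambda_2(G)\le n(\mu_1(\chi_G)+\mu_2(\chi_G))$, and this inequality holds in every case. I expect this caveat to be the only delicate point; otherwise $\mu_1,\mu_2$ are exactly $\lambda_1(G)/n,\lambda_2(G)/n$ by Steps 1--3.
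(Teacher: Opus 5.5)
Your Steps 1--3 are the paper's argument made precise. The paper only says to replace the eigenvectors of $G$ by the corresponding step functions (deferring details to Terpai and Liu). That shows each $\lambda_i(G)/n$ is an eigenvalue of $A_{\chi_G}$, but says nothing about what else lies in the spectrum. Your extra observation is that $A_{\chi_G}$ maps into $S$ and annihilates $S^\perp$. This pins the spectrum down as $\{\lambda_i(G)/n\}$ together with $0$ of infinite multiplicity. Hence $n\mu_1(\chi_G)=\lambda_1(G)$ always, and $n\mu_2(\chi_G)=\max\{\lambda_2(G),0\}$.

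Your caveat is therefore correct. For $G=K_n$ with $n\ge 2$ one has $\lambda_2(K_n)=-1$ while $\mu_2(\chi_{K_n})=0$, so the lemma as literally stated fails exactly for complete graphs. As you note, this is harmless, because only the inequality $\lambda_1(G)+\lambda_2(G)\le n\,\sigma(\chi_G)$ is used downstream. The blow-up lemma in Section~\ref{sec:asymptotic_result} has the same slip, since $\lambda_2(G^{(t)})=t\max\{\lambda_2(G),0\}$ for $t\ge 2$. It is rescued by the same one-sided inequality.

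Step 4 itself needs tidying. The attempts via an edge $uv$, via interlacing and via the trace are abandoned midway and should be removed. Smith's theorem is overkill, and as quoted it is inaccurate. The correct form says that the \emph{non-isolated} vertices induce a complete multipartite graph; for example, $K_2\cup K_1$ has $\lambda_2=0$ but is not complete multipartite. Also, ``complete multipartite with one part'' should read ``with all parts of size one''.

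The clean argument uses a non-edge instead of an edge. If $G\neq K_n$, choose non-adjacent $u\neq v$. Then $z^{\mathrm{T}}A(G)z=0$ for all $z\in\operatorname{span}(e_u,e_v)$, so the min-max theorem gives $\lambda_2(G)\ge 0$. For $G=K_n$ one has $\lambda_2=-1$.
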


In view of Lemma \ref{lemma:graph_to_graphon}, the following result implies 
Theorem \ref{thm:upper_bound_8_over_7_asymptotic}. 

\begin{theorem}\label{thm:upper_bound_8_over_7_graphon}
For any graphon $K\in \mathcal{K}/\sim$, we have $\sigma(K)\leq 8/7$. 
\end{theorem}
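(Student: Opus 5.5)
We follow the outline sketched in the introduction. By compactness of $\mathcal{K}/\sim$ and continuity of $\mu_1,\mu_2$, an extremal graphon exists. Among all extremal graphons we fix one, $W$, that also maximizes $\mu_1(W)$; this is possible because the set of extremal graphons is closed in $\mathcal{K}/\sim$. Let $f\ge 0$ be a unit $\mu_1$-eigenfunction and $g\perp f$ a unit $\mu_2$-eigenfunction. By Lemma \ref{lemma:spectral_sum_graphon},
\[
\sigma(W)=fWf+gWg=\int\!\!\int W(x,y)\bigl(f(x)f(y)+g(x)g(y)\bigr)\dif x\dif y .
\]
For fixed $f,g$ this is linear in $W$. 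So, up to measure zero, $W(x,y)=1$ wherever $f(x)f(y)+g(x)g(y)>0$ and $W(x,y)=0$ wherever it is negative. This is the adjacency criterion. On the zero set we will use the tie-breaking by maximal $\mu_1$ to show that $W$ is still $0/1$ valued.

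Next we perturb the eigenfunctions in a way that preserves orthogonality. The key choice is to perturb $f$ and $g$ \emph{simultaneously} on small sets. We could rotate the pair $(f(x),g(x))$ on a small set. Alternatively, we could move the mass of a small set $S$ onto the value $(f(x_0),g(x_0))$ and compensate with a second set so that $\langle f,g\rangle=0$ and both norms are preserved to first order. Comparing first-order terms in the objective with the Lagrange multipliers, which are $\mu_1$ and $\mu_2$ since these are eigenfunctions, should give a pointwise identity of the form
\[
\mu_1 f(x)^2+\mu_2 g(x)^2=c\qquad\text{for a.e. } x .
\]
Thus the points $(f(x),g(x))$ lie on a fixed ellipse. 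The eigen-equations $\mu_1 f(x)=\int W(x,y)f(y)\dif y$ and $\mu_2 g(x)=\int W(x,y)g(y)\dif y$ then involve only the pushforward measure $\nu$ of $x\mapsto (f(x),g(x))$ on the ellipse.

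The constraints on $\nu$ are finitely many linear moment conditions: total mass $1$, $\int f^2=\int g^2=1$, $\int fg=0$, and the eigen-equations evaluated through the adjacency rule. By Carath\'eodory's theorem we can therefore replace $\nu$ by an atomic measure with few atoms, which we aim to bound by $6$, without decreasing $\sigma$. This reduces the problem to a weighted graph $G^*$ on at most $6$ vertices with vertex weights $w_i$ summing to $1$. Adjacency is determined by the sign of $f_if_j+g_ig_j$, that is, by the angular separation of the points on the ellipse, which is less than $90^\circ$ after rescaling. Loops are allowed, and $\sigma(W)$ equals the spectral sum of the matrix $D^{1/2}AD^{1/2}$ with $D=\diag(w)$.

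Structurally, the points are arranged cyclically by angle and each is adjacent to an arc of its neighbours. This should leave few configurations, and dominance arguments (merging vertices, or showing a configuration is a limit of another) should reduce them all to the single graph $H_6$, which realizes the construction of value $8/7$.

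The main obstacle is the final inequality: showing $\lambda_1+\lambda_2\le 8/7$ for $D^{1/2}A(H_6)D^{1/2}$ over all weights on the simplex. The sum of the top two eigenvalues is the largest eigenvalue of the second exterior power $\Lambda^2(D^{1/2}AD^{1/2})$, a $15\times 15$ matrix that is polynomial in the $w_i$. So it suffices to prove $\tfrac87 I-\Lambda^2(\cdot)\succeq 0$ on the simplex. We would try to certify this with a matrix sum-of-squares decomposition (Putinar/P\'olya type with multipliers $w_i$), found numerically and then rationalized for an exact verification. If this is too large, we would first exploit the symmetries of $H_6$ to block-diagonalize the matrix.
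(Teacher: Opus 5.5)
\textbf{Gap: the reduction to at most $6$ atoms.} You say the constraints on the pushforward $\nu$ are ``finitely many linear moment conditions'', so Carath\'eodory lets you pass to few atoms without decreasing $\sigma$. That step fails for two reasons. First, the eigen-equations $\mu_1 f(x)=\int W(x,y)f(y)\dif y$ are a pointwise family, one condition per point of the support, not finitely many moments. Second, and more seriously, under the adjacency rule
\[
\sigma=\iint \max\{\langle p,q\rangle,0\}\dif\nu(p)\dif\nu(q),
\]
which is a non-convex quadratic functional of $\nu$. Matching finitely many moments gives no control over it, and nothing in your sketch produces the number $6$.

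The structural step cannot compensate. On the half-ellipse $f\ge 0$, the rule ``adjacent iff the angle is at most $90^\circ$'' admits twin-free configurations of arbitrarily large order. So an explicit mechanism bounding the number of atoms is indispensable.

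\textbf{The missing idea (the paper's cloning lemma).} Work separately on $U=S^+(g)\cup S^0(g)$ and on $S^0(g)\cup S^-(g)$, where $W\equiv 1$ by the adjacency criterion. Clone points $x_1,\dots,x_k\in U$, including their rows of $W$, over a partition $U_1,\dots,U_k$ of $U$. Then:
\begin{enumerate}
\item The $U\times U$ block of the quadratic form becomes the perfect square $\bigl(\sum_i u_if(x_i)\bigr)^2$.
\item The eigen-equation at the $x_i$ rewrites the $U\times U^c$ cross terms as $2\mu_1\sum_i u_if(x_i)^2-2\sum_i u_if(x_i)\int_U f$.
\item It suffices to match the two second moments $\int_U f^2$ and $\int_U fg$. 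The moment $\int_U g^2$ then follows from the ellipse equation, and norms and orthogonality are preserved.
\item One obtains $\tilde f\,\widetilde W\tilde f=\mu_1+\bigl(\int_U f-\sum_i u_if(x_i)\bigr)^2$, and similarly for $g$.
\item Carath\'eodory in $\mathbb{R}^2$ gives $k=3$. Extremality forces the first moments to agree, so $\sigma(\widetilde W)=\sigma(W)$.
\item Maximality of $\mu_1$ guarantees that $\tilde f,\tilde g$ are again the top two eigenfunctions of $\widetilde W$, so the procedure can be repeated on the other half.
\end{enumerate}
This gives $3+3=6$ steps. Twin-freeness and monotonicity then reduce everything to a finite case analysis ending in induced subgraphs of $H_6$.

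\textbf{Minor points.} Your mass-transfer version of the ellipse step is a valid variant of the paper's interval stretching. A pure rotation of $(f,g)$ on a set only reproduces the eigen-equations; the measure and $W$ must be perturbed together. In the final step:
\begin{itemize}
\item You need the second additive compound $P^{\mathrm{T}}(M\otimes I+I\otimes M)P$, with eigenvalues $\lambda_i+\lambda_j$, not $\Lambda^2M$, whose eigenvalues are $\lambda_i\lambda_j$.
\item The entries $\sqrt{w_iw_j}$ are polynomial in $x_i=\sqrt{w_i}$, not in $w_i$. This is why the paper certifies $\tfrac87 I_{15}-\psi(M^*(\bm{x}))=V(\bm{x})^{\mathrm{T}}QV(\bm{x})+(1-\|\bm{x}\|_2^2)T$ on the unit sphere.
\item Adjacency is decided by the angle between the unscaled vectors $(f_i,g_i)$ and $(f_j,g_j)$. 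Rescaling the ellipse to a circle changes that angle.
\end{itemize}
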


We prove Theorem \ref{thm:upper_bound_8_over_7_graphon} in the following section.

\section{Proof of Theorem \ref{thm:upper_bound_8_over_7_graphon}}
\label{sec:proof_graphon_theorem} 

Throughout this section, $W\in \mathcal{K}/\sim$ will denote a graphon that 
maximizes the spectral sum $\sigma(\,\cdot \,)$ over $\mathcal{K}/\sim$ and 
has the maximum value of its largest eigenvalue among all such graphons. 

We will denote by $\mu_1$ and $f$ (resp. $\mu_2$ and $g$) the largest (resp. the second largest) 
eigenvalue and the corresponding eigenfunction of $W$. This means 
\begin{equation}\label{eq:eigenvalue_equation_mu1_f}
(A_Wf)(x) = \int_0^1 W(x,y)f(y)\dif{y} = \mu_1f(x)
\end{equation}
and
\begin{equation}\label{eq:eigenvalue_equation_mu2_g}
(A_Wg)(x) = \int_0^1 W(x,y)g(y)\dif{y} = \mu_2g(x),     
\end{equation}
for a.e. $x\in [0,1]$. We further assume that $\|f\|_2^2 = \|g\|_2^2 = 1$ 
and $\langle f,g\rangle = 0$. Throughout, if the limits of integration of an integral 
are not specified, then it should be taken to be equal to $[0,1]^2$. 

In \cite{Ebrahimi_Mohar_Nikiforov_Ahmady_2008}, the authors constructed an infinite 
family of graphs of order $n$ with spectral sum $\frac{8}{7}n - 2$. In light of 
Lemma \ref{lemma:graph_to_graphon}, we see that 
\begin{equation}\label{eq:lower_bound}
 \mu_1 + \mu_2\ge \frac{8}{7}.  
\end{equation}
Since $\mu_1\le 1$, we conclude that $\mu_2 \geq 1/7$. In particular, this 
means that $\mu_2$ is always positive. We will often use this technical detail without mention in later proofs.

\subsection{Adjacency criterion}

Since $f$ is the eigenfunction for $\mu_1$, we can assume that $f$ is non-negative. 
Define the \emph{positive support} of $g$ to be the set $S^+(g) = \{x\in [0,1] :g(x) > 0\}$. 
The \emph{negative support} $S^-(g)$ and the \emph{zero set} $S^0(g)$ are defined analogously. 
The \emph{support} of $g$ is given by $S(g) = S^+(g)\cup S^-(g)$. For convenience, 
for any $(x,y)\in [0,1]^2$, we define 
\[
\kappa(x,y):= f(x)f(y) + g(x) g(y).
\] 
We first observe the following eigen-entry condition for an element $(x,y)\in [0,1]^2$ 
to be contained in the support of the extremal graphon $W$. When translated to graphs, 
this condition tells us when two vertices are adjacent in the extremal graph, and hence 
we call it the \emph{adjacency criterion}. 

\begin{lemma}[Adjacency]\label{lemma:adjacency} 
For a.e. $(x,y)\in [0,1]^2$, 
\[
W(x,y) = 
\begin{cases}
1, & \text{if}\ \kappa(x,y) \geq 0; \\
0, & \text{otherwise}.
\end{cases}
\]
\end{lemma}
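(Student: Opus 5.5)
We argue by local perturbation of $W$ in the variational formula of Lemma \ref{lemma:spectral_sum_graphon}, keeping the pair $(f,g)$ fixed so that the orthogonality constraint is never disturbed. For any graphon $K$, the pair $f,g$ is still admissible: $\|f\|_2=\|g\|_2=1$ and $\langle f,g\rangle=0$. Hence
\[
\sigma(K)\ \ge\ fKf+gKg=\int K(x,y)\,\kappa(x,y)\dif{x}\dif{y}.
\]
By \eqref{eq:eigenvalue_equation_mu1_f} and \eqref{eq:eigenvalue_equation_mu2_g}, the same expression with $K=W$ equals $\mu_1+\mu_2=\sigma(W)$. Therefore
\[
\sigma(K)-\sigma(W)\ \ge\ \int (K-W)\,\kappa .
\]
Since $W$ is extremal, the right-hand side must be $\le 0$ for every graphon $K$.

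\textbf{Step 1: the region $\kappa<0$.} Let $N=\{(x,y):\kappa(x,y)<0,\ W(x,y)>0\}$; it is symmetric because $\kappa$ and $W$ are. Take $K=W\chi_{N^c}$, which is a graphon. Then $\int(K-W)\kappa=-\int_N W\kappa>0$ unless $m(N)=0$, contradicting maximality. So $W=0$ a.e. where $\kappa<0$.

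\textbf{Step 2: the region $\kappa>0$.} Let $P=\{\kappa>0,\ W<1\}$ and take $K=W\chi_{P^c}+\chi_P$. We get $\int(K-W)\kappa=\int_P(1-W)\kappa>0$ unless $m(P)=0$. So $W=1$ a.e. where $\kappa>0$.

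\textbf{Step 3: the zero set $Z=\{\kappa=0\}$.} This is the main obstacle, because the first-order argument gives no information there. Here the secondary extremality assumption (maximal $\mu_1$ among maximizers) is used. Let $K=W\chi_{Z^c}+\chi_Z$. Then $fKf+gKg=\sigma(W)$, so $\sigma(K)\ge\sigma(W)$, and hence $\sigma(K)=\sigma(W)$ and $K$ is also extremal.

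Next we compare first eigenvalues. We have $fKf=fWf+\int_Z(1-W)f(x)f(y)\ge \mu_1$ since $f\ge0$, so $\mu_1(K)\ge\mu_1$. Suppose $\int_Z(1-W)f\otimes f>0$. Then $\mu_1(K)>\mu_1(W)$, contradicting the choice of $W$.

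So $\int_Z(1-W)f(x)f(y)=0$. Combined with $\kappa=0$ on $Z$, this also gives $\int_Z(1-W)g(x)g(y)=0$ in the integrated sense, since the two integrals sum to zero. With this, $fKf=\mu_1$ and $gKg=\mu_2$. Since $\sigma(K)=\sigma(W)$, we need $\mu_1(K)=\mu_1$ and $\mu_2(K)=\mu_2$. For that, note that $\mu_1(K)\ge\mu_1$ and $\mu_1(K)\le\mu_1$, the latter by maximality of $\mu_1$ among extremal graphons. Then $\mu_2(K)=\sigma-\mu_1(K)=\mu_2$.

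By the Min-Max Theorem \ref{thm:min_max}, $f$ is then a $\mu_1$-eigenfunction of $K$ and $g$ is a $\mu_2$-eigenfunction of $K$. Hence $A_{K-W}f=0$ and $A_{K-W}g=0$, where $K-W=(1-W)\chi_Z\ge0$.

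Write $D=K-W$. On $Z$ we have $f(x)f(y)=-g(x)g(y)$. From $\int D(x,y)f(y)\dif{y}=0$ for a.e. $x$ and $f\ge0$, we get $Df(y)=0$ a.e. in $y$ for a.e. $x$, i.e. $D(x,y)f(y)=0$ a.e. Where $D>0$ this forces $f(y)=0$, hence $f(x)f(y)=0$, hence $g(x)g(y)=0$. Then $\int D(x,y)g(y)\dif{y}=0$ together with $D\ge0$ does not immediately kill $g(y)$, and I would handle this sub-case next. On the part of $Z$ where $g(y)\ne0$ and $f(y)=0$, we must have $g(x)=0$, and symmetry gives $f(x)=0$. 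So $D$ is supported on pairs where $f(x)=g(x)=0$ or $f(y)=g(y)=0$, which restricts the exceptional set to points where both functions vanish.

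On that set the values of $W$ do not affect any of the quantities in the argument. So we may redefine $W=1$ there, i.e. replace $W$ by the extremal, equivalent-for-our-purposes graphon $K$, which has the same $\mu_1,\mu_2,f,g$. Making this replacement precise, and checking it is compatible with the ``a.e.'' phrasing of the lemma, is the delicate point. With it, $W=1$ a.e. on $\{\kappa\ge0\}$, completing the proof.
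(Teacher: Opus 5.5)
Your Steps 1--2 and the first half of Step 3 follow the paper's argument. The paper handles $\kappa\neq 0$ in one stroke with $W_1=\chi_{\{\kappa\ge 0\}}$, then uses maximality of $\mu_1$ to get $\int_{D_0}(1-W)f(x)f(y)=0$. The paper stops there and reads off $W=1$ a.e.\ on $D_0$ (your $Z$). Strictly, that only covers $D_0\cap\{f(x)f(y)>0\}$, so you are right that more is needed. Your extra step is sound: $f,g$ remain $\mu_1,\mu_2$-eigenfunctions of $K$, so $A_{K-W}f=0$. Since $K-W\ge 0$ and $f\ge 0$, this forces $f(x)=f(y)=0$, hence $g(x)g(y)=0$, on the support of $K-W$. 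So the exceptional set lies in $(N_0\times[0,1])\cup([0,1]\times N_0)$ with $N_0:=\{f=g=0\}$. (The condition $A_{K-W}g=0$ is not needed.)

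The gap is the ending. Redefining $W=1$ on the residual set, i.e.\ replacing $W$ by $K$, does not prove the lemma. The statement is about the fixed $W$, and a swap gives at best a ``without loss of generality'' version, which you yourself leave unfinished. The missing fact is $m(N_0)=0$, and it has a short proof. Suppose $c:=m([0,1]\setminus N_0)<1$. Choose a measurable $\phi:[0,1]\to[0,1]\setminus N_0$ that pushes Lebesgue measure forward to $c^{-1}$ times Lebesgue measure on $[0,1]\setminus N_0$. Put $W'(s,t)=W(\phi(s),\phi(t))$, $f'=\sqrt{c}\,f\circ\phi$ and $g'=\sqrt{c}\,g\circ\phi$. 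Because $f$ and $g$ vanish on $N_0$, $f'$ and $g'$ are orthonormal, and $f'W'f'+g'W'g'=(\mu_1+\mu_2)/c>\sigma(W)$, contradicting extremality. Alternatively, the ellipse equation gives $\mu_1f^2+\mu_2g^2=\mu_1+\mu_2>0$ a.e. Its proof works verbatim with $W(\varphi(x),\varphi(y))$ in place of $\chi_U$, so it does not depend on this lemma. With $m(N_0)=0$ the exceptional set is null, and your argument proves the lemma for $W$ itself without modifying $W$.
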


\begin{proof}
Let $D_+ = \{(x,y)\in [0,1]^2: \kappa(x,y) > 0\}$, and define $D_0$ and $D_-$ similarly. 
Consider the graphon $W_1$ defined by 
\[
W_1(x,y) = 
\begin{cases}
1, & \text{if}\ (x,y)\in D_+\cup D_0;\\ 
0, & \text{if}\ (x,y)\in D_-.
\end{cases}
\]
By Lemma \ref{lemma:spectral_sum_graphon}, we have
\begin{align*}
\sigma (W_1) - \sigma(W)  
& \geq \int \big(W_1(x,y) - W(x,y)\big) \kappa(x,y) \dif{x}\dif{y} \\
& = \int_{D_+} \big(1 - W(x,y)\big) \kappa(x,y) \dif{x}\dif{y} - \int_{D_-} W(x,y)\kappa(x,y) \dif{x}\dif{y} \\
& \geq 0.
\end{align*}
Since $\sigma (W) \geq \sigma (W_1)$ by the choice of $W$, we have $\sigma(W_1) = \sigma(W)$, 
and both integrals in the last inequality must be $0$. This implies $W(x,y) = W_1(x,y)$ a.e. on $D_+\cup D_-$.

To address the behaviour on $D_0$, we consider the first eigenvalue $\mu_1(W_1)$. 
Applying \eqref{eq:min-max-graphon} and making use of the fact that $W(x,y) = W_1(x,y)$ a.e. on $D_+\cup D_-$ yield
\begin{align*}
  \mu_1(W_1) & \geq \int W_1(x,y)f(x)f(y)\dif{x}\dif{y} \\
  & = \int W(x,y)f(x)f(y)\dif{x}\dif{y} + \int_{D_0} \left(1 - W(x,y)\right)f(x)f(y)\dif{x}\dif{y} \\
  & = \mu_1 + \int_{D_0} \left(1 - W(x,y)\right)f(x)f(y)\dif{x}\dif{y}.
\end{align*}
Since $\mu_1\ge \mu_1(W_1)$ by the choice of $W$, we conclude $\mu_1(W_1) = \mu_1$ 
and the integral in the last inequality is 0. This implies $W(x,y)=1$ a.e. on $D_0$. The proof is complete.
\end{proof}

\subsection{Ellipse equation for $f$ and $g$}

In the next lemma, we show that for the graphon $W$, the function $\mu_1 f(x)^2 + \mu_2 g(x)^2$ 
is constant on the interval $[0,1]$ except maybe on a subset of measure zero.

\begin{lemma}[Ellipse equation]\label{lemma:ellipse}
For a.e. $x\in [0,1]$, we have    
\[
\mu_1 f(x)^2 + \mu_2 g(x)^2 = \mu_1 + \mu_2. 
\]
\end{lemma}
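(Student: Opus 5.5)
We use the standard "vertex-weight perturbation" argument from Terpai's and Breen--Riasanovsky--Tait--Urschel's graphon proofs, adapted to the pair $(f,g)$. The idea is to move a small amount of measure from a set where $\mu_1 f^2+\mu_2 g^2$ is small to a set where it is large, and compare spectral sums. By Lemma \ref{lemma:spectral_sum_graphon}, it suffices to exhibit, for the perturbed graphon, an orthonormal pair of test functions whose combined quadratic form increases to first order.

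\textbf{Setup.} Write $h(x)=\mu_1 f(x)^2+\mu_2 g(x)^2$. Integrating and using \eqref{eq:eigenvalue_equation_mu1_f}, \eqref{eq:eigenvalue_equation_mu2_g} together with $\|f\|_2=\|g\|_2=1$ gives $\int_0^1 h=\mu_1+\mu_2$. So it is enough to show that $h$ is a.e. constant.

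Suppose not. Then there exist $\varepsilon>0$ and disjoint sets $S,T\subseteq[0,1]$ of equal small positive measure $\delta$ such that $h\le c-\varepsilon$ on $S$ and $h\ge c$ on $T$ for some $c$. Choosing these as small neighbourhoods of Lebesgue points, we may also assume that $f,g$ are nearly constant on $S$ and on $T$, with values $(f_s,g_s)$ and $(f_t,g_t)$.

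\textbf{The perturbed graphon.} Build $W'$ by a measure-preserving rearrangement in which the points of $S$ become copies of a fixed point $t_0\in T$. Concretely, set $W'(x,y)=W(\tau(x),\tau(y))$, where $\tau$ is the identity off $S$ and maps $S$ into $t_0$'s fibre. (In graphon language, this rescales the vertex weight of $t_0$ up by $\delta$ and that of $s_0$ down by $\delta$.) The test functions are $f'=f\circ\tau$ and $g'=g\circ\tau$.

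\textbf{First-order computation.} Using \eqref{eq:eigenvalue_equation_mu1_f} and \eqref{eq:eigenvalue_equation_mu2_g}, we get
\[
f'W'f'=\mu_1+\delta\mu_1(f_t^2-f_s^2)+O(\delta^2), \qquad g'W'g'=\mu_2+\delta\mu_2(g_t^2-g_s^2)+O(\delta^2).
\]
The norms and inner product change as
\[
\|f'\|^2=1+\delta(f_t^2-f_s^2), \qquad \|g'\|^2=1+\delta(g_t^2-g_s^2), \qquad \langle f',g'\rangle=\delta(f_tg_t-f_sg_s).
\]
Thus the Rayleigh-type ratios stay $\mu_i+O(\delta^2)$ after normalization. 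This is the wrong normalization, so we instead compare with the weighted inner product of the blown-up graph. The correct statement is that $W'$ restricted to the reweighted measure has operator $D^{1/2}AD^{1/2}$. Equivalently, we take the eigenvalue-equation derivative: $\frac{d}{d\delta}\,\sigma = \mu_1^2(f_t^2-f_s^2)\cdot(\text{const})$. Let me commit to the cleaner route below.

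\textbf{Cleaner route: stationarity in vertex weights.} Regard $W$ as a kernel on $[0,1]$ with a density $w\equiv 1$, and consider the operator $w^{1/2}Kw^{1/2}$. By Hellmann--Feynman (first-order perturbation of simple eigenvalues, or of the top-two sum via Lemma \ref{lemma:spectral_sum_graphon}), the derivative of $\mu_1+\mu_2$ under $w\mapsto w+\eta$ with $\int\eta=0$ is
\[
\int \eta(x)\bigl(\mu_1 f(x)^2+\mu_2 g(x)^2\bigr)\,dx .
\]
To justify this, take $\tilde f=w^{-1/2}(\dots)$ as test functions. The pair $\bigl((1+\eta)^{1/2}f,\,(1+\eta)^{1/2}g\bigr)$ is no longer orthonormal, but its Gram matrix is $I+O(\eta)$. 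Orthonormalizing by $G^{-1/2}$ changes $\mathrm{tr}$ of the $2\times2$ form only via the term $-\tfrac12\mathrm{tr}(\Delta G\cdot\diag(\mu_1,\mu_2))\cdot 2$. Working this out gives the displayed derivative with a positive sign.

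Finally, the reweighted graphon is realized by a measure-preserving rearrangement, so it equals a genuine graphon in $\mathcal{K}/\sim$. Taking $\eta=\delta(\chi_T-\chi_S)$ then yields $\sigma(W')\ge\sigma(W)+\delta\varepsilon-O(\delta^2)$, contradicting maximality.

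\textbf{Main obstacle.} The main obstacle is the orthogonality constraint in this first-order analysis. I would handle it exactly as above, by Gram--Schmidt/symmetric orthonormalization of the perturbed pair. This works because the off-diagonal Gram entry is $O(\delta)$ and enters the trace only at order $O(\delta^2)$, while the diagonal entries produce precisely the $h$-weighted first-order term.
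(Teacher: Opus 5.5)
Your committed route is correct in substance and is essentially the paper's proof. A map $\varphi$ with $\varphi_*(\mathrm{Leb})=w\,du$ is exactly the paper's stretching homeomorphism, with $w=1/(\varphi'\circ\varphi^{-1})$. Under the unitary $\phi\mapsto w^{1/2}(\phi\circ\varphi^{-1})$, your test functions $w^{-1/2}f,\,w^{-1/2}g$ are precisely the paper's $\tilde f=\varphi'\cdot(f\circ\varphi)$ and $\tilde g$. So the $2\times2$ form stays exactly $\diag(\mu_1,\mu_2)$, and the squared norms change to first order by $-\int\eta f^2$ and $-\int\eta g^2$. The paper's Gram--Schmidt step ($\tilde h\chi_U\tilde h=\mu_2+\alpha^2\mu_1\ge\mu_2$ and $\|\tilde h\|_2\le\|\tilde g\|_2$) is the exact version of your remark about the off-diagonal Gram entry: that entry can only help.

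Fix two slips when writing it up.

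\textbf{Test functions.} The pair must be $\bigl((1+\eta)^{-1/2}f,(1+\eta)^{-1/2}g\bigr)$. With exponent $+\tfrac12$, the form itself moves by $2\int\eta h$ at first order, so keeping only your $\Delta G$ term would give the wrong sign.

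\textbf{Perturbation size.} The amplitude of $\eta$ must be decoupled from $m(S)=m(T)=\delta$. With $\eta=\delta(\chi_T-\chi_S)$ the gain is $\int\eta h\ge\delta^2\varepsilon$, not $\delta\varepsilon$, so it no longer visibly beats an $O(\delta^2)$ error. Instead fix $S,T$ and take amplitude $t\to0$; then the gain $t\,\delta\,\varepsilon$ beats the $O(t^2)$ error.

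Incidentally, your discarded first attempt was not wrong; you dropped a factor $2$. Both blocks $S\times S^c$ and $S^c\times S$ contribute, so $f'W'f'=\mu_1+2\delta\mu_1(f_t^2-f_s^2)+O(\delta^2)$. After normalization this gives the same first-order gain $\delta\mu_1(f_t^2-f_s^2)$, and likewise for $g$.
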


\begin{proof} 
Suppose the following is true: for any disjoint subintervals $I$ 
and $J$ of $[0,1]$ with $m(I)=m(J)$, we have 
\begin{equation}\label{eq:constant_on_intervals}
\int_I \mu_1 f(x)^2 + \mu_2 g(x)^2 \, \dif{x} = \int_J \mu_1 f(x)^2 + \mu_2\, g(x)^2 \,\dif{x}.
\end{equation}
Then $\mu_1 f(x)^2 + \mu_2\,g(x)^2$ is a constant function a.e.. Since 
\[
\int_0^1\mu_1 f(x)^2 + \mu_2\,g(x)^2 \dif{x} = \mu_1 +\mu_2,
\] 
the constant is $\mu_1 +\mu_2$ and the assertion holds. In what follows, 
we aim to establish equation \eqref{eq:constant_on_intervals}. 

Consider disjoint subintervals $I=[i_1, i_2]$ and $J=[j_1, j_2]$ of $[0,1]$ 
with length $m(I)=i_2-i_1=j_2- j_1=m(J)>0$. Without loss of generality, assume that
\[  
\int_I \mu_1 f(x)^2 + \mu_2\,g(x)^2 \, \dif{x} \leq \int_J \mu_1 f(x)^2 + \mu_2\, g(x)^2 \,\dif{x}.
\]
For any $\varepsilon > 0$ sufficiently small, consider the (unique) piecewise linear 
function $\varphi$ that stretches $I$ to length $(1 + \varepsilon) m(I)$, shrinks $J$ 
to length $(1 - \varepsilon) m(J)$, and shifts only the elements in between $I$ and $J$. 
Clearly, $\varphi: [0, 1] \to [0, 1]$ is an orientation-preserving homeomorphism such 
that $\varphi(0) =0$ and $\varphi(1)=1$. Furthermore, $\varphi$ is differentiable a.e. 
with derivative $\varphi'$ given by
\[
\varphi'(x) = 
\begin{cases}
1 + \varepsilon, & \text{if}\ x\in I; \\
1 - \varepsilon, & \text{if}\ x\in J; \\
1, & \text{otherwise}.
\end{cases}
\]

Let $U = \{(x,y)\in [0,1]^2: W(\varphi(x), \varphi(y)) > 0\}$. 
By Lemma \ref{lemma:adjacency}, we see that $U = \{(x,y)\in [0,1]^2: W(\varphi(x), \varphi(y)) = 1\}$.  
Consider the corresponding graphon $\chi_U$, and the functions 
\begin{equation}\label{eq:tilde-f-g-h}
\tilde{f}:= \varphi'\cdot (f \circ \varphi), ~~ 
\tilde{g}:= \varphi'\cdot (g \circ \varphi) ~~\text{and}~~ \tilde{h}:=\tilde{g} - \alpha \tilde{f},
\end{equation}
where $\alpha := \frac{\langle \tilde{f}, \tilde{g}\rangle}{\langle \tilde{f}, \tilde{f}\rangle}$. 
Clearly, $\langle \tilde{f}, \tilde{h}\rangle = 0$. 

Now, we compute the $\mathscr{L}^2$-norms of $\tilde{f}$, $\tilde{g}$, and $\tilde{h}$. 
Applying the change of variable $u = \varphi(x)$ and using \eqref{eq:tilde-f-g-h}, we obtain
\begin{align}\label{eq:f_tilde_norm}
\| \tilde{f} \|_2^2 
& = \int_{I} (1+\varepsilon) \varphi'(x)\, f(\varphi(x))^2 \dif{x} 
+ \int_{J} (1-\varepsilon) \varphi'(x)\, f(\varphi(x))^2 \dif{x} 
+ \int_{[0,1]\backslash (I\cup J)} \varphi'(x)\, f(\varphi(x))^2 \dif{x} \nonumber \\
& = \int_{\varphi(I)} (1+\varepsilon)\, f(u)^2 \dif{u} 
+ \int_{\varphi(J)} (1-\varepsilon)\, f(u)^2 \dif{u} 
+ \int_{[0,1]\backslash (\varphi(I)\cup \varphi(J))} f(u)^2 \dif{u}\nonumber \\
& = \int_{[0,1]} f(u)^2 \dif{u} + \varepsilon \left(\int_{\varphi(I)} f(u)^2 \dif{u} 
- \int_{\varphi(J)} f(u)^2 \dif{u} \right) \nonumber \\
& = 1 + \varepsilon \left( \int_{I} f(u)^2 \dif{u} - \int_{J} f(u)^2 \dif{u} \right) \nonumber \\
& \phantom{=} + \varepsilon \left(\int_{\varphi(I)\setminus I} f(u)^2 \dif{u} 
- \int_{I\setminus\varphi(I)} f(u)^2 \dif{u} + \int_{J\setminus \varphi(J)} f(u)^2 \dif{u} 
- \int_{\varphi(J)\setminus J} f(u)^2 \dif{u} \right)\nonumber \\
& = 1 + \varepsilon \left(\| \chi_I f \|_2^2 - \| \chi_J f \|_2^2\right) + o(\varepsilon).
\end{align}
Similar to the calculation above, we have
\begin{equation}\label{eq:g_tilde_norm}
\|\tilde{g}\|_2^2 = 1 + \varepsilon \left(\| \chi_I g \|_2^2 - \|\chi_J g\|_2^2\right) + o(\varepsilon).
\end{equation}
On the other hand, by \eqref{eq:tilde-f-g-h} we see
\[
\|\tilde{g}\|_2^2 = \langle \tilde{g}, \tilde{g}\rangle 
= \langle \tilde{h} + \alpha\tilde{f}, \tilde{h} + \alpha \tilde{f}\rangle 
= \langle \tilde{h}, \tilde{h}\rangle + \alpha^2 \langle \tilde{f}, \tilde{f}\rangle, 
\]
since $\langle \tilde{f}, \tilde{h}\rangle = 0$. It follows that 
\begin{equation}\label{eq:h_tilde_norm}
\|\tilde{h}\|_2^2 = \|\tilde{g}\|_2^2 - \alpha^2\|\tilde{f}\|_2^2 \leq \|\tilde{g}\|_2^2.
\end{equation}

Next, we shall compute the quadratic forms $\tilde{f}\chi_U \tilde{f}$, 
$\tilde{g}\chi_U\tilde{g}$ and $\tilde{h}\chi_U\tilde{h}$. 
Using the substitutions $u = \varphi (x)$ and $v = \varphi (y)$, we see that
\begin{align}\label{eq:quadratic_form_tilde_f}
\tilde{f} \chi_U \tilde{f} 
& = \int \chi_{U}(x,y)\ \tilde{f}(x) \tilde{f}(y) \dif{x}\dif{y} \nonumber \\
& = \int \chi_{U}(x,y)\ \varphi'(x)f(\varphi(x)) \ \varphi'(y)f(\varphi(y)) \dif{x}\dif{y} \nonumber \\
& = \int W(u,v) f(u)f(v) \dif{u}\dif{v} = \mu_1.
\end{align}
Similarly, $\tilde{g} \chi_U \tilde{g} = \mu_2$, and
\begin{align*} \tilde{g} \chi_U \tilde{f} 
& = \tilde{f} \chi_U \tilde{g} \\ 
& = \int \chi_{U}(x,y) \tilde{f}(x) \tilde{g}(y) \dif{x}\dif{y} \\
& = \int W(\varphi(x), \varphi(y)) \varphi'(x)f(\varphi(x)) \varphi'(y)g(\varphi(y)) \dif{x}\dif{y} \\
& = \int W(u,v) f(u)g(v) \dif{u}\dif{v} \\
& = \int_{[0,1]} \left(\int_{[0,1]} W(u,v) g(v)\dif{v}\right) f(u)\dif{u} \\
& = \mu_2 \int_{[0,1]} g(u)f(u) \dif{u} = 0.
\end{align*}
Combining the above equations with \eqref{eq:tilde-f-g-h}, we deduce that
\begin{align}\label{eq:quadratic_form_tilde_h}
\tilde{h} \chi_U \tilde{h} 
& = \big(\tilde{g}(x) - \alpha \tilde{f}(x)\big) \chi_U \big(\tilde{g}(x) - \alpha \tilde{f}(x)\big) \nonumber \\
& = \tilde{g}(x)\chi_U\tilde{g}(x) + \alpha^2\tilde{f}(x)\chi_U\tilde{f}(x) - 2\alpha\tilde{f}(x)\chi_U\tilde{g}(x) \nonumber \\
& = \mu_2 + \alpha^2 \mu_1 \geq \mu_2.
\end{align}
Finally, we compare $\sigma(W)$ and $\sigma(\chi_U)$. By the choice of $W$, 
Lemma \ref{lemma:spectral_sum_graphon} and \eqref{eq:f_tilde_norm} -- \eqref{eq:quadratic_form_tilde_h}, we find
\begin{align*}
0 & \leq \sigma(W) - \sigma(\chi_U) \\
& \leq (\mu_1 + \mu_2) - \left(\frac{\tilde{f}\chi_U \tilde{f}}{\|\tilde{f} \|_2^2} 
+ \frac{\tilde{h} \chi_U \tilde{h}}{\| \tilde{h} \|_2^2}\right) 
\leq (\mu_1 + \mu_2) - \left(\frac{\tilde{f}\chi_U \tilde{f}}{\| \tilde{f} \|_2^2 } 
+ \frac{\tilde{h} \chi_U \tilde{h}}{\| \tilde{g} \|_2^2 }\right) \\
& \leq (\mu_1 + \mu_2) - \left(\frac{\mu_1}{1 + \varepsilon \left(\|\chi_I f \|_2^2 - \|\chi_J f \|_2^2\right) + o(\varepsilon)} 
+ \frac{\mu_2}{1 + \varepsilon \left(\| \chi_I g \|_2^2 - \| \chi_J g \|_2^2\right) + o(\varepsilon)}\right) \\
& = \mu_1 \varepsilon \left(\| \chi_I f \|_2^2 - \| \chi_J f \|_2^2\right) 
+ \mu_2 \varepsilon \left(\| \chi_I g \|_2^2 - \| \chi_J g \|_2^2\right) + o(\varepsilon) \\
& = \varepsilon \left(\int_I \mu_1 f(x)^2 + \mu_2\,g(x)^2 \ \dif{x} - \int_J \mu_1 f(x)^2 + \mu_2\, g(x)^2 \ \dif{x}\right) + o(\varepsilon),
\end{align*}
as $\varepsilon\to 0$. If $\int_I \mu_1 f(x)^2 + \mu_2\,g(x)^2 \, \dif{x} < \int_J \mu_1 f(x)^2 + \mu_2\, g(x)^2 \,\dif{x}$, 
then choosing $\varepsilon$ sufficiently small makes the right-hand side of the above inequality negative, 
which is a contradiction. We conclude that equation \eqref{eq:constant_on_intervals} holds and the proof is complete. 
\end{proof}

\subsection{$f$ and $g$ are step functions with at most $6$ steps}

Throughout, let
\[
E := \{x\in [0,1]: x\ \text{satisfies \eqref{eq:eigenvalue_equation_mu1_f}, \eqref{eq:eigenvalue_equation_mu2_g} 
and the ellipse equation in Lemma \ref{lemma:ellipse}} \}.
\]
By Lemma \ref{lemma:ellipse}, we see that $E$ is dense in $[0,1]$. So in what follows, 
we will work with $E$ (or $E\times E$) so that we can assume the eigenvalue-equation 
and the ellipse equation without worry. Let 
\[
E^+: =E\cap S^+(g), \quad E^0:=E\cap S^0(g)\quad \text{and}\quad E^-:=E\cap S^-(g).
\] 

We first prove a technical lemma about the eigenfunctions $f$ and $g$. 

\begin{lemma}\label{lemma:mixed-cloning}
Let $U\subseteq S^+(g)\cup S^0(g)$ be a measurable subset with positive measure. 
Suppose there exist points $x_1,\ldots,x_k\in U\cap E$ and non-negative real numbers 
$u_1, \ldots, u_k$ with $\sum_i u_i = m(U)$ such that the following two moment constraints are satisfied: 
\begin{equation}\label{eq:moment-constraints}
\sum_{i=1}^k u_i f(x_i)^2= \int_U f(x)^2 \dif{x},
\qquad
\sum_{i=1}^k u_i f(x_i)g(x_i)= \int_U f(x)g(x) \dif{x}, 
\end{equation}
Then the linear moments are also preserved; that is,
\[
\sum_{i=1}^k u_i f(x_i)= \int_U f(x) \dif{x},
\qquad
\sum_{i=1}^k u_i g(x_i)= \int_U g(x) \dif{x}.
\] 
By symmetry, a similar assertion holds if $U\subseteq S^-(g)\cup S^0(g)$.
\end{lemma}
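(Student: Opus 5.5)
The plan is to use extremality of $W$ directly. We build a competitor graphon in which the part of $[0,1]$ occupied by $U$ is replaced by $k$ ``clones'' of the points $x_1,\dots,x_k$, of measures $u_1,\dots,u_k$. We then show that its spectral sum exceeds $\sigma(W)$ by exactly $(\widetilde F-F)^2+(\widetilde G-G)^2$. Here
\[
F=\int_U f,\qquad G=\int_U g,\qquad \widetilde F=\sum_i u_i f(x_i),\qquad \widetilde G=\sum_i u_i g(x_i).
\]
Maximality of $\sigma(W)$ then forces $\widetilde F=F$ and $\widetilde G=G$.

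\textbf{Construction.} Partition $U$ into measurable sets $U_1,\dots,U_k$ with $m(U_i)=u_i$. Define $\widetilde f=f$ and $\widetilde g=g$ off $U$, and $\widetilde f=f(x_i)$, $\widetilde g=g(x_i)$ on $U_i$. Define the graphon $\widetilde W$ by:
\begin{itemize}
\item $\widetilde W=W$ on $U^c\times U^c$;
\item $\widetilde W(x,y)=W(x_i,y)$ for $x\in U_i$, $y\notin U$, and symmetrically;
\item $\widetilde W=1$ on $U\times U$.
\end{itemize}
Note that $W(x_i,\cdot)$ is a measurable $\{0,1\}$-function by Lemma~\ref{lemma:adjacency}.

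\textbf{Orthonormality is preserved.} The two moment constraints give $\|\widetilde f\|_2=1$ and $\langle\widetilde f,\widetilde g\rangle=0$. For $\|\widetilde g\|_2$, the ellipse equation (Lemma~\ref{lemma:ellipse}) holds at each $x_i\in E$ and a.e.\ on $U$, and $\mu_2>0$. Together with the $f^2$-constraint and $\sum_i u_i=m(U)$, this gives $\sum_i u_i g(x_i)^2=\int_U g^2$. Hence by Lemma~\ref{lemma:spectral_sum_graphon},
\[
\sigma(\widetilde W)\ \ge\ \iint \widetilde W(x,y)\,\widetilde\kappa(x,y),\qquad \widetilde\kappa(x,y):=\widetilde f(x)\widetilde f(y)+\widetilde g(x)\widetilde g(y).
\]

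\textbf{Computing the quadratic form.} This is the key step. The decisive observation is that on $U\times U$ both $f\ge 0$ and $g\ge0$, since $U\subseteq S^+(g)\cup S^0(g)$. So $\kappa\ge0$ there, and Lemma~\ref{lemma:adjacency} gives $W=1$ a.e.\ on $U\times U$.

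For the old graphon:
\begin{itemize}
\item the $U\times U$ contribution is $F^2+G^2$;
\item for a.e.\ $x\in U$, the eigen-equations and the ellipse equation give $\int_{U^c}W(x,y)\kappa(x,y)\,dy=(\mu_1+\mu_2)-f(x)F-g(x)G$;
\item so the total cross contribution is $2\bigl(m(U)(\mu_1+\mu_2)-F^2-G^2\bigr)$.
\end{itemize}
For the new graphon:
\begin{itemize}
\item the same pointwise identity at the points $x_i\in E$ (this is exactly why the $x_i$ are taken in $E$) gives cross contribution $2\bigl(m(U)(\mu_1+\mu_2)-\widetilde F F-\widetilde G G\bigr)$;
\item the $U\times U$ contribution is $\widetilde F^2+\widetilde G^2$;
\item the $U^c\times U^c$ part is unchanged.
\end{itemize}
Subtracting,
\[
\iint\widetilde W\widetilde\kappa-\iint W\kappa=(\widetilde F-F)^2+(\widetilde G-G)^2 .
\]

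\textbf{Conclusion.} Since $\iint W\kappa=\mu_1+\mu_2=\sigma(W)\ge\sigma(\widetilde W)$, the displayed difference must vanish. This yields $\widetilde F=F$ and $\widetilde G=G$. The case $U\subseteq S^-(g)\cup S^0(g)$ follows by replacing $g$ with $-g$.

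The main obstacle is the bookkeeping in the cross terms. One has to justify evaluating the eigen-equations pointwise at the chosen $x_i$, which is guaranteed by $x_i\in E$. One also has to verify that $W\equiv1$ on $U\times U$, which is what makes the $U\times U$ block computable in closed form and produces the perfect square.
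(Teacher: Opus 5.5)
Your proposal is correct and follows the paper's proof essentially step for step. The shared elements are the same cloning graphon $\widetilde W$ with $\widetilde W=1$ on $U\times U$, the same orthonormality check via the ellipse equation, and the same perfect-square conclusion from extremality of $W$. The only difference is bookkeeping: you compute the combined form $\iint \widetilde W\widetilde\kappa$ directly, using the ellipse equation in the cross terms, whereas the paper computes $\tilde f\,\widetilde W\tilde f$ and $\tilde g\,\widetilde W\tilde g$ separately (Claim~\ref{claim:tilde_fWf}), and it reuses these separate identities in Lemma~\ref{lemma:six_steps_f_g} to get $\mu_1(\widetilde W)\ge\mu_1$.
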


\begin{proof} 
Consider a partition of $U$ into $k$ measurable subsets $U_1, \ldots, U_k$ 
such that $m(U_i) = u_i$ for all $1\le i\le k$. Define a new graphon $\widetilde{W}$ with 
respect to the points $x_i$ and the subsets $U_i$ such that
\[
\widetilde{W}(x,y)=
\begin{cases}
W(x,y), & x,y\notin U;\\
W(x_i,y), & x\in U_i,\ y\notin U; \\
1, & x,y\in U.
\end{cases}
\]
Essentially, we are \emph{cloning} the point $x_i$ over $U_i$. Moreover, since $W(x,y)=1$ 
for all $x,y\in U$, the graphons $W$ and $\widetilde{W}$ agree on $U\times U$. 
Define functions $\tilde{f},\tilde{g}$ using a similar cloning operation, i.e.,  
\begin{equation}\label{eq:tilde-def}
\tilde f(x)=
\begin{cases}
f(x_i), & x\in U_i;\\
f(x), & x\notin U;
\end{cases}
\qquad
\tilde g(x)=
\begin{cases}
g(x_i), & x\in U_i;\\
g(x), & x\notin U.
\end{cases}
\end{equation}

We have the following two claims.

\begin{claim}\label{claim:norm_tilde_f_g}
$\|\tilde{f}\|_2 = \|\tilde{g}\|_2 = 1$, and $\langle \tilde f,\tilde g\rangle = 0$. 
\end{claim}

\begin{proof}[Proof of Claim \ref{claim:norm_tilde_f_g}]
Using \eqref{eq:moment-constraints} we get 
\[
\|\tilde f\|_2^2
= \int_{U^c} f(x)^2 \dif{x} + \sum_{i=1}^k u_i f(x_i)^2
= \left(1-\int_U f(x)^2 \dif{x}\right) + \sum_{i=1}^k u_i f(x_i)^2 = 1.
\]
Here, $U^c = [0,1]\backslash U$. Again, by Lemma \ref{lemma:ellipse}, $g(x_i)^2$ 
is a function of $f(x_i)^2$ given by
\[
g(x_i)^2=\frac{\mu_1+\mu_2-\mu_1 f(x_i)^2}{\mu_2}.
\]
Averaging with weights $u_i$ and using the first constraint in \eqref{eq:moment-constraints} gives
\begin{equation}\label{eq:sum_aver_g2_xi}
\sum_{i=1}^k u_i g(x_i)^2
=\frac{(\mu_1+\mu_2)\, m(U) - \mu_1 \sum u_i f(x_i)^2}{\mu_2}.
\end{equation}
On the other hand, integrating the ellipse equation from Lemma \ref{lemma:ellipse} over $U$ yields
\[
\mu_1 \int_U f(x)^2 \dif{x} + \mu_2\int_U g(x)^2 \dif{x} = (\mu_1+\mu_2)\, m(U),
\] 
which is equivalent to
\begin{equation}\label{eq:integration_g2_over_U}
\int_U g(x)^2 \dif{x} = \frac{(\mu_1+\mu_2)\, m(U) - \mu_1\int_U f(x)^2 \dif{x}}{\mu_2}.
\end{equation}
Combining \eqref{eq:sum_aver_g2_xi} and \eqref{eq:integration_g2_over_U} we see 
$\sum_i u_i g(x_i)^2=\int_U g(x)^2 \dif{x}$. Thus
\[
\|\tilde g\|_2^2
= \int_{U^c} g(x)^2 \dif{x} + \sum_i u_i g(x_i)^2
= 1-\int_U g(x)^2 \dif{x} + \sum_{i=1}^k u_i g(x_i)^2
= 1.
\]

Finally, since $\langle f,g\rangle=0$, we have 
\[
\int_{U^c} f(x)g(x) \dif{x} = -\int_U f(x)g(x) \dif{x},
\]
and hence the second constraint in \eqref{eq:moment-constraints} gives us
\[
\langle \tilde f,\tilde g\rangle
= \int_{U^c} f(x)g(x) \dif{x} + \sum_{i=1}^k u_i f(x_i)g(x_i)=0,
\]
finishing the proof of the claim.
\end{proof}

\begin{claim}\label{claim:tilde_fWf}
We have
\begin{equation}\label{eq:mu_1_W_tilde}
\tilde f\ \widetilde{W}\tilde f = \mu_1+\left(\int_U f(x) \dif{x} -\sum_{i=1}^k u_i f(x_i)\right)^2,
\end{equation}
and 
\begin{equation}\label{eq:mu_2_W_tilde}
\tilde g\ \widetilde{W}\tilde g 
=\mu_2+\left(\int_U g(x) \dif x -\sum_{i=1}^k u_i g(x_i)\right)^2.
\end{equation}
\end{claim}

\begin{proof}[Proof of Claim \ref{claim:tilde_fWf}]
We only prove \eqref{eq:mu_1_W_tilde}; the proof of \eqref{eq:mu_2_W_tilde} is similar. 
Expand $\tilde f\ \widetilde{W}\tilde f$ into the following blocks
\begin{align*}
\tilde f\ \widetilde{W}\tilde f
& =\iint_{U^c\times U^c} W(x,y)f(x)f(y)\,\dif{x} \dif{y} \\
& \quad + 2\sum_{i=1}^k \int_{U_i}\int_{U^c} W(x_i,y)\, f(x_i)\, f(y) \dif{x} \dif{y} \\
& \quad + \iint_{U\times U} \tilde f(x)\tilde f(y) \dif{x} \dif{y}.
\end{align*}
Note that the second line equals $2\sum_i u_i f(x_i)\int_{U^c} W(x_i,y)f(y)\,\dif y$, while the third line can be written as
$\big(\int_U \tilde f(x) \dif x \big)^2 = \big(\sum_i u_i f(x_i)\big)^2$ by \eqref{eq:tilde-def}. So
\begin{equation}\label{eq:Qmix-f}
\tilde f\ \widetilde{W}\tilde f
= I+2\sum_{i=1}^k u_i f(x_i)\int_{U^c} W(x_i,y)f(y)\,\dif{y} + \left(\sum_{i=1}^k u_i f(x_i)\right)^2,
\end{equation}
where $I:=\iint_{U^c\times U^c} W(x,y) f(x) f(y) \dif{x} \dif{y}$. Using the 
eigenvalue-equation at $x\in U\cap E$, we have 
\begin{equation}\label{eq:row-outside}
\int_{U^c} W(x,y)f(y)\,\dif{y} = \mu_1 f(x)-\int_U f(y) \dif{y}.
\end{equation}
Using \eqref{eq:row-outside} at $x=x_i$ we rewrite the mixed term in \eqref{eq:Qmix-f}:
\[
2\sum_{i=1}^k u_i f(x_i) \int_{U^c}W(x_i,y)f(y)\,\dif{y} 
= 2\mu_1\sum_{i=1}^k u_i f(x_i)^2 - 2\sum_{i=1}^k u_i f(x_i)\int_U f(y) \dif{y}. 
\]
Combining the above equation with \eqref{eq:Qmix-f} yields
\begin{equation}\label{eq:Qmix-f-2}
\tilde f\, \widetilde{W}\tilde f
= I + 2\mu_1\sum_{i=1}^k u_i f(x_i)^2-2\sum_{i=1}^k u_i f(x_i)\int_U f(x)\dif{x}+\left(\sum_{i=1}^k u_i f(x_i)\right)^2.
\end{equation}

Now, we compute the term $\mu_1=f\, Wf$ using the same block expansion. 
Applying \eqref{eq:row-outside} for $x\in U\cap E$, we obtain
\begin{align*}
\mu_1 
& = I + 2\int_U f(x)\left(\mu_1 f(x)-\int_U f(y)\dif{y}\right) \dif{x} + \left(\int_U f(x)\dif{x}\right)^2 \\
& = I + 2\mu_1\int_U f(x)^2 \dif{x} - \left(\int_U f(x) \dif{x}\right)^2.
\end{align*}
Subtract this identity from \eqref{eq:Qmix-f-2} to get
\[
\tilde f\ \widetilde{W}\tilde f -\mu_1
=2\mu_1\left(\sum_{i=1}^k u_i f(x_i)^2 - \int_U f(x)^2 \dif{x} \right)
+\left(\int_U f(x)\dif{x} -\sum_{i=1}^k u_i f(x_i)\right)^2.
\]
By \eqref{eq:moment-constraints}, the first term vanishes, and we get the desired claim. 
\end{proof}

Because $\tilde f,\tilde g$ are orthonormal, using Lemma \ref{lemma:spectral_sum_graphon}, we have
\begin{align*}
\sigma(\widetilde{W})
& \geq \tilde f\ \widetilde{W}\tilde f + \tilde g\ \widetilde{W}\tilde g \\
& = \mu_1+\mu_2 + \left(\int_U f(x) \dif{x} -\sum_i u_i f(x_i)\right)^2 
+ \left(\int_U g(x) \dif{x} -\sum_i u_i g(x_i)\right)^2.
\end{align*}
Since $W$ is extremal, $\sigma(W)\ge \sigma(\widetilde{W})$, and so the integrals in the 
last inequality must be zero. The assertion follows.
\end{proof}

Note that in the above lemma, the assumption $U\subseteq S^+(g)\cap S^0(g)$ allows 
us to specify the value of $\widetilde{W}$ on $U$, i.e., $\widetilde{W}(x,y) = 1 = W(x,y)$ 
for a.e. $x,y\in U$. This is necessary for the calculation to work. 

Next, we recall two results from the theory of convexity. 

\begin{lemma}[{\cite[Section 2.1.4]{Boyd_Vandenberghe_2004}}]\label{lemma:expectation_convex_hull}
Let $C \subseteq \mathbb{R}^d$ be a convex set and $\bm{x}$ be a random vector such that 
$\bm{x} \in C$ with probability one. Then $\mathbb{E}(\bm{x}) \in C$.
\end{lemma}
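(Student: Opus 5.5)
The plan is a separation argument: suppose $\mathbb{E}(\bm{x})\notin C$ and derive a contradiction by integrating a separating linear inequality. We induct on the dimension $d$, since the separating hyperplane need only be weakly separating, and in that case we must pass to a lower-dimensional slice of $C$. Throughout we assume $\mathbb{E}(\bm{x})$ exists, i.e.\ $\mathbb{E}\|\bm{x}\|<\infty$, which is implicit in the statement. Write $\bm{m}=\mathbb{E}(\bm{x})$.

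For the base case $d=0$, or more generally whenever $C$ has empty interior in its affine hull, we first reduce to the affine hull. Let $A=\operatorname{aff}(C)$, an affine subspace of dimension $k\le d$. Since $\bm{x}\in A$ almost surely and $A$ is closed under affine combinations, we get $\bm{m}\in A$; this holds because $A$ is the zero set of finitely many affine functionals, and expectation commutes with affine functionals. So we may identify $A$ with $\mathbb{R}^k$ and assume $C$ has nonempty interior in $\mathbb{R}^k$. The case $k=0$ is trivial, since then $C$ is a point.

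Now suppose $\bm{m}\notin C$. By the supporting/separating hyperplane theorem for a convex set and a point outside it, there is a nonzero $\bm{a}$ with $\bm{a}^\top\bm{y}\ge \bm{a}^\top\bm{m}$ for all $\bm{y}\in C$. Hence $\bm{a}^\top\bm{x}-\bm{a}^\top\bm{m}\ge 0$ almost surely. This quantity has expectation $0$, so it vanishes almost surely. Consequently $\bm{x}$ lies almost surely in $C'=C\cap H$, where $H=\{\bm{y}:\bm{a}^\top\bm{y}=\bm{a}^\top\bm{m}\}$. The set $C'$ is convex and contained in a hyperplane, which is an affine space of dimension $k-1$. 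By the induction hypothesis applied within $H$, we get $\bm{m}\in C'\subseteq C$, a contradiction.

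The main obstacle is only the non-strict separation. A strict separation would immediately give $\bm{a}^\top\bm{m}>\bm{a}^\top\bm{m}$, but strict separation can fail when $\bm{m}$ lies on the boundary of a non-closed $C$. The dimension-reduction step above handles exactly this case. The remaining checks are routine: that the separating hyperplane theorem applies to an arbitrary, not necessarily closed, convex set and an outside point in finite dimensions, and that the almost-sure events used are measurable.
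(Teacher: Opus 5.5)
Your proof is correct. The paper gives no argument of its own for this lemma; it only cites Boyd--Vandenberghe. So what you have written is a self-contained justification rather than a variant of a proof in the paper.

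The steps check out. Weak separation gives a nonzero $\bm{a}$ with $\bm{a}^{\mathrm{T}}\bm{y}\ge\bm{a}^{\mathrm{T}}\bm{m}$ on $C$. The nonnegative, mean-zero variable $\bm{a}^{\mathrm{T}}(\bm{x}-\bm{m})$ then vanishes almost surely. An affine identification $H\cong\mathbb{R}^{k-1}$ commutes with expectation and $\bm{m}\in H$, so strong induction on dimension closes the argument.

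Your reduction to $\operatorname{aff}(C)$ is not logically required, since weak separation of a point from a convex set not containing it holds in any finite dimension. It does make that step routine, though: with nonempty interior, $\bm{m}\notin C$ forces $\bm{m}\notin\operatorname{int}C=\operatorname{int}\operatorname{cl}C$, so the separating/supporting hyperplane theorem for the closed set $\operatorname{cl}C$ applies.

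Your handling of the non-strict case is exactly what the paper needs. Strict separation alone would only give $\mathbb{E}(\bm{x})\in\operatorname{cl}C$. The paper, however, applies Carath\'eodory's Theorem to $(A,B)\in\mathrm{conv}(\Phi(E^+\cup E^0))$, a hull that need not be closed. The resulting points $x_i$ must genuinely lie in $E$ for Lemma~\ref{lemma:mixed-cloning} to apply.

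Your integrability assumption is met there, since $f,g\in\mathscr{L}^2[0,1]$ gives $f^2,fg\in\mathscr{L}^1[0,1]$. Finally, a convex set need not be Borel, so ``with probability one'' should be read as ``outside a null set''; your argument handles this reading verbatim.
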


For a subset $S \subseteq \mathbb{R}^d$, the \emph{convex hull} of $S$, denoted by $\mathrm{conv}(S)$, 
is defined to be the smallest convex set that contains it. We recall the following well-known theorem 
by Carath\'eodory on the convex hull of sets. 

\begin{theorem}[Carath\'eodory's Theorem {\cite[Theorem 1.2.3]{Matousek_2002}}]\label{theorem:Caratheodory}
Let $S \subseteq \mathbb{R}^d$. If $\bm{x}\in\mathrm{conv}(S)$, then there exist points
$s_0,s_1,\ldots,s_d \in S$ and coefficients $a_0,a_1,\ldots,a_d \geq 0$ with
$\sum_{i=0}^d a_i = 1$ such that
\[
\bm{x} = \sum_{i=0}^d a_i s_i.
\]
Equivalently, every point in $\mathrm{conv}(S)$ is a convex combination of at most $d+1$ points of $S$.
\end{theorem}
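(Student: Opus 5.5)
The plan is to reduce to a statement about finite convex combinations and then shorten any such combination that uses more than $d+1$ points, via an affine dependence.

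\emph{Step 1: describe $\mathrm{conv}(S)$ by finite combinations.} Let $C'$ be the set of all finite convex combinations $\sum_{i=1}^m a_i s_i$ with $s_i\in S$, $a_i\ge 0$ and $\sum_i a_i=1$.
\begin{enumerate}[(i)]
\item $C'\supseteq S$, since each $s\in S$ is a combination with one term.
\item $C'$ is convex: if $x=\sum_i a_is_i$ and $y=\sum_j b_jt_j$ lie in $C'$ and $\lambda\in[0,1]$, then $\lambda x+(1-\lambda)y$ is again a finite convex combination of the points $s_i,t_j$, with coefficients $\lambda a_i$ and $(1-\lambda)b_j$.
\item $C'$ lies in every convex set $C\supseteq S$. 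This follows by induction on $m$. For $m\ge 2$ with $a_m<1$, write
\[
\sum_{i=1}^m a_is_i=(1-a_m)\sum_{i<m}\frac{a_i}{1-a_m}s_i+a_ms_m .
\]
The inner sum lies in $C$ by the induction hypothesis, and convexity of $C$ finishes the step. If $a_m=1$, the sum is just $s_m\in C$.
\end{enumerate}
Together these give $\mathrm{conv}(S)=C'$. So every $\bm x\in\mathrm{conv}(S)$ has a representation $\bm x=\sum_{i=1}^m a_is_i$. Choose one with $m$ minimal; minimality forces all $a_i>0$.

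\emph{Step 2: show $m\le d+1$.} Suppose $m\ge d+2$. The $m-1\ge d+1$ vectors $s_2-s_1,\dots,s_m-s_1$ in $\mathbb{R}^d$ are then linearly dependent. This yields real numbers $b_1,\dots,b_m$, not all zero, with
\[
\sum_{i=1}^m b_is_i=0 \quad\text{and}\quad \sum_{i=1}^m b_i=0 .
\]
Because the $b_i$ sum to zero and are not all zero, some $b_i>0$.

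Let $t=\min\{a_i/b_i : b_i>0\}>0$ and set $a_i'=a_i-tb_i$. Then:
\begin{enumerate}[(a)]
\item $a_i'\ge 0$ for all $i$. For $b_i\le 0$ this is clear since $a_i>0$; for $b_i>0$ it holds by the choice of $t$.
\item $\sum_i a_i'=1$ and $\sum_i a_i's_i=\bm x$, because $\sum_ib_i=0$ and $\sum_ib_is_i=0$.
\item At least one $a_i'$ equals $0$, namely at an index attaining the minimum.
\end{enumerate}
Deleting that point gives a representation of $\bm x$ with $m-1$ points, contradicting the minimality of $m$. Hence $m\le d+1$.

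If $m<d+1$, pad with arbitrary extra points of $S$ carrying coefficient $0$ (if $S=\emptyset$ the statement is vacuous). This gives exactly $d+1$ points $s_0,\dots,s_d$, as stated.

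The only mildly delicate point is Step 1, i.e.\ identifying the abstractly defined smallest convex set with the set of finite convex combinations. Once that is settled, the reduction in Step 2 is a routine linear-algebra argument.
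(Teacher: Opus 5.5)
Your proof is correct. The paper gives no proof of this statement; it quotes it from Matou\v{s}ek, and your argument is the standard proof found there: identify $\mathrm{conv}(S)$ with the set of finite convex combinations, take a representation with the fewest points, and for $m \ge d+2$ use an affine dependence $\sum_i b_i s_i = 0$, $\sum_i b_i = 0$ to remove one point. Your final padding with zero coefficients also correctly yields the exact form $s_0,\dots,s_d$ stated in the theorem.
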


We are now ready to show that $f,g$ are step-functions with at most 6 steps. 

\begin{lemma}\label{lemma:six_steps_f_g}
We can assume that $W$ has the following property: except on a set of measure zero, 
$f$ \textup{(}and therefore $g$\textup{)} takes at most $3$ values on $S^+(g)\cup S^0(g)$ 
and at most $3$ values on $S^0(g)\cup S^-(g)$. 
\end{lemma}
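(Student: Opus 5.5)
We will replace $W$ by a cloned graphon built from Lemma \ref{lemma:mixed-cloning}, with the points and weights supplied by Carath\'eodory's Theorem. Take $U = S^+(g)\cup S^0(g)$; we may assume $m(U)>0$, since otherwise there is nothing to prove on this side.

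\textbf{Step 1: the moment vector lies in a convex hull.} Consider the map $\Phi: U\cap E\to\mathbb{R}^2$, $\Phi(x) = (f(x)^2, f(x)g(x))$, and let $\bm{x}$ be the random point $\Phi(X)$ with $X$ uniform on $U$. Then
\[
\mathbb{E}(\bm{x}) = \frac{1}{m(U)}\Big(\int_U f^2, \int_U fg\Big).
\]
By Lemma \ref{lemma:expectation_convex_hull}, applied with $C=\mathrm{conv}(\Phi(U\cap E))$, this expectation lies in $\mathrm{conv}(\Phi(U\cap E))$. Here we use that $U\cap E$ has full measure in $U$.

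\textbf{Step 2: reduce to three points.} Carath\'eodory's Theorem with $d=2$ gives points $x_1,x_2,x_3\in U\cap E$ and weights $a_i\ge 0$ with $\sum a_i=1$ and $\sum a_i\Phi(x_i)=\mathbb{E}(\bm{x})$. Setting $u_i=a_i m(U)$, the moment constraints \eqref{eq:moment-constraints} hold with $k=3$.

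\textbf{Step 3: clone and check extremality.} Perform the cloning of Lemma \ref{lemma:mixed-cloning}, producing $\widetilde W$, $\tilde f$, $\tilde g$. The proof of that lemma shows the following.
\begin{enumerate}[(i)]
\item $\tilde f,\tilde g$ are orthonormal (Claim \ref{claim:norm_tilde_f_g}).
\item By Claim \ref{claim:tilde_fWf}, together with the vanishing of the linear-moment discrepancies, $\tilde f\widetilde W\tilde f=\mu_1$ and $\tilde g\widetilde W\tilde g=\mu_2$.
\end{enumerate}
Hence $\sigma(\widetilde W)\ge\mu_1+\mu_2=\sigma(W)$, so $\widetilde W$ is also extremal for the spectral sum.

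We must also ensure $\widetilde W$ has maximum first eigenvalue among such graphons. First, $\mu_1(\widetilde W)\ge \tilde f\widetilde W\tilde f=\mu_1$. Second, equality $\sigma(\widetilde W)=\tilde f\widetilde W\tilde f+\tilde g\widetilde W\tilde g$ forces, via Theorem \ref{thm:min_max}, that $\tilde f$ and $\tilde g$ span the top two eigenspace directions. I would argue that $\tilde f$ is then the top eigenfunction and $\tilde g$ the second, using $\mu_1(\widetilde W)\le \mu_1$ from the choice of $W$.

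So $\widetilde W$ is an admissible replacement for $W$. On $U$, the functions $\tilde f$ and $\tilde g$ take only the three values $f(x_i)$ and $g(x_i)$. Moreover $\tilde g\ge 0$ there, so the positive and zero supports of $\tilde g$ on $U$ coincide with $U$.

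\textbf{Step 4: repeat on the other side.} Apply the same procedure to $U'=S^-(\tilde g)\cup S^0(\tilde g)$ for $\widetilde W$, using the symmetric version of Lemma \ref{lemma:mixed-cloning}. The overlap $S^0$ needs care: cloning on $U'$ could disturb the three values already fixed on $S^0$. I would handle this by treating $U$ and $U'$ as the disjoint sets $S^+\cup S^0$ and $S^-$ in the second round. Alternatively, one can note that on $S^0$ the ellipse equation fixes $f$ to a single value, so $S^0$ contributes at most one value to each side.

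\textbf{Main obstacle.} The delicate point is Step 3's claim that the cloned graphon still satisfies all the standing hypotheses, namely that it is extremal, has maximal $\mu_1$, and has $\tilde f,\tilde g$ as its eigenfunctions. Only then can the earlier lemmas, and the second cloning round, be reapplied.

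A second, smaller check is that $f$ determines $g$ on each side, which gives the parenthetical ``and therefore $g$''. The ellipse equation determines $g^2$ from $f^2$, and the sign of $g$ is fixed on each of $S^+(g)$ and $S^-(g)$, while $g=0$ on $S^0(g)$.
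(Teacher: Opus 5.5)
Steps 1--3 are the paper's own argument. You apply Carath\'eodory to $\Phi=(f^2,fg)$ on the full-measure part of $U$, and Lemma~\ref{lemma:mixed-cloning} recovers the linear moments. Then $\mu_1(\widetilde W)\ge \tilde f\widetilde W\tilde f=\mu_1$, combined with maximality of $\mu_1$ among extremal graphons, makes $\widetilde W$ admissible with eigenfunctions $\tilde f,\tilde g$. Your hedged ``I would argue'' is exactly the paper's one-line Min-Max step and is fine.

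The point to correct is Step 4. Your first fix, cloning in the second round only on $S^-(\tilde g)$, does not prove the statement. Suppose the first round put positive weight on a point of $E^0$. Then $S^0(\tilde g)$ has positive measure and carries one value of $\tilde f$, and Carath\'eodory on $S^-(\tilde g)$ can add three more values. That gives four values on $S^0\cup S^-$, and a six-step configuration with $\beta_3=0$, which the later case $k=6$ does not handle.

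Your alternative is the paper's route: run the second round on $U'=S^0(\tilde g)\cup S^-(\tilde g)$. It needs one more line of bookkeeping. The second round leaves $S^+(\tilde g)$ untouched. Any new zero set of positive measure must be cloned from points of $S^0(\tilde g)$, where the ellipse equation forces $\tilde f=\sqrt{(\mu_1+\mu_2)/\mu_1}$, a single value. So there are two cases.
\begin{itemize}
\item If $S^0(\tilde g)$ has positive measure, the first round used at most two points with $g>0$. Hence $S^+\cup S^0$ still carries at most $3$ values after the second round.
\item If $S^0(\tilde g)$ is null, take the Carath\'eodory points from the good set of $\widetilde W$ intersected with $S^-(\tilde g)$. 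This set still has full measure in $U'$, and no new zero set is created.
\end{itemize}
In either case $U'=S^0\cup S^-$ carries at most $3$ values by Carath\'eodory. With this addition your proof is complete and coincides with the paper's, which leaves this bookkeeping implicit.
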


\begin{proof} 
Let $U:=S^+(g)\cup S^0(g)$ and define
\[
A:=\frac{1}{m(U)} \int_U f(x)^2\,\dif{x},\quad 
B:=\frac{1}{m(U)} \int_U f(x)g(x)\,\dif{x},\quad
C:=\frac{1}{m(U)} \int_U f(x)\,\dif{x}.
\]
Recall that $E^+\cup E^0\subseteq U$ is such that $m(E^+\cup E^0)=m(U)$. 
Define the map $\Phi:E^+\cup E^0\to\mathbb R^2$ by
\[
\Phi(x):=\bigl(f(x)^2,\ f(x)g(x)\bigr).
\]
Then, by the definition of $A$ and $B$,
\[
(A,B)=\frac{1}{m(U)} \int_U \Phi(x)\,\dif{x} = \frac{1}{m(U)} \int_{E^+\cup E^0} \Phi(x)\,\dif{x}.
\]
Using Lemma \ref{lemma:expectation_convex_hull}, we see that $(A,B)\in \mathrm{conv}(\Phi(E^+\cup E^0))$. 
By Carath\'eodory's Theorem (Theorem \ref{theorem:Caratheodory}), there exist points $x_1,x_2,x_3\in E^+\cup E^0$ 
and weights $t_1,t_2,t_3\ge 0$ with $t_1+t_2+t_3=1$ such that
\[
t_1\Phi(x_1)+t_2\Phi(x_2)+t_3\Phi(x_3)=(A,B),
\]
which is equivalent to 
\begin{equation}\label{eq:momentmatch}
\sum_{i=1}^3 t_i f(x_i)^2 = A,\qquad
\sum_{i=1}^3 t_i f(x_i)g(x_i) = B.
\end{equation}

Next, we show that necessarily
\begin{equation}\label{eq:meanmatch}
t_1 f(x_1)+t_2 f(x_2)+t_3 f(x_3)=C.
\end{equation}
To this end, set $u_i:=t_i\, m(U)$ and partition $U$ into subsets $U_1, U_2, U_3$ 
such that $m(U_i) = u_i$ for all $i$. Then, \eqref{eq:momentmatch} becomes
\[
\int_U f(x)^2\,\dif{x} = \sum_{i=1}^3 u_i f(x_i)^2,\qquad
\int_U f(x)g(x)\,\dif{x} = \sum_{i=1}^3 u_i f(x_i)g(x_i).
\]
Using Lemma \ref{lemma:mixed-cloning} we get $\sum_{i=1}^3 u_i f(x_i)=\int_U f(x)\,\dif{x}$, 
which is equivalent to \eqref{eq:meanmatch}.

Finally, define $\widetilde{W}$, $\tilde{f}$ and $\tilde{g}$ as in the proof of 
Lemma \ref{lemma:mixed-cloning} w.r.t. $U$, $x_i$'s, and $U_i$'s described above. Then, 
the proof of Lemma \ref{lemma:mixed-cloning} gives us that $\sigma(\widetilde{W})=\sigma(W)$. 
Moreover, \eqref{eq:mu_1_W_tilde} implies that $\mu_1(\widetilde{W})\ge \mu_1$. By the choice of $W$, 
we see that $\mu_1(\widetilde{W}) = \mu_1$. This means $\widetilde{W}$ is an extremal graphon 
with maximum largest eigenvalue. Furthermore, by \eqref{eq:mu_1_W_tilde}, we have 
$\mu_1(\widetilde{W})=\tilde{f}\, \widetilde{W}\tilde{f}$, implying $\tilde{f}$ is an 
$\mu_1(\widetilde{W})$-eigenfunction. Also, it is clear that 
$\mu_2 = \mu_2(\widetilde{W})=\tilde g \widetilde{W}\tilde{g}$ using \eqref{eq:mu_2_W_tilde}, 
which implies $\tilde{g}$ is an $\mu_2(\widetilde{W})$-eigenfunction. Thus, $\widetilde{W}$ 
is an extremal graphon with the property that its eigenfunctions $\tilde{f}$ and $\tilde{g}$ take at most 3 values on $U$. 
Now, starting with $\widetilde{W}$ and $U=S^0(g)\cup S^-(g)$ and repeating the above steps, 
we can get an extremal graphon with the desired property. This completes the proof. 
\end{proof}

\subsection{Monotonicity of $f$ and $g$}

Without loss of generality, we can assume that $g$ is monotonically decreasing on $[0,1]$. 
By the assumption on $g$, we see that $f$ is monotonically increasing on $E^+$, 
constant on $E^0$ and monotonically decreasing on $E^-$. In light of Lemma \ref{lemma:adjacency}, 
it is clear that $W(x,y)=1$ whenever $x,y\in E^+\cup E^0$ or $x,y\in E^-\cup E^0$. 
The value of $W(x,y)$ is undetermined at this stage whenever $x\in E^+$ 
and $y\in E^-$. We do have the following monotonicity property.

\begin{lemma}[Monotonicity of neighbourhoods]\label{lemma:monotonicity} 
The following statements hold.
\begin{enumerate}[$(i)$]
\item Let $x_1,x_2\in E^+$ and $y\in E^-$ such that $x_2\geq x_1$. If $W(x_1,y)=1$, then $W(x_2,y)=1$.   
\item Let $x_1,x_2\in E^-$ and $y\in E^+$ such that $x_2\leq x_1$. If $W(x_2,y)=1$, then $W(x_1,y)=1$. 
\end{enumerate}
\end{lemma}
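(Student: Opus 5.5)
The plan is to reduce both parts to a pointwise sign comparison of $\kappa$ and then use Lemma \ref{lemma:adjacency}. For points of $E$, Lemma \ref{lemma:adjacency} says that, up to a null set, $W(x,y)=1$ exactly when $\kappa(x,y)=f(x)f(y)+g(x)g(y)\ge 0$. So it suffices to compare $\kappa$ at the two points $x_1,x_2$ against the fixed point $y$. The inputs are the standing conventions: $f\ge 0$, $g$ is decreasing on $[0,1]$, and, via the ellipse equation of Lemma \ref{lemma:ellipse}, $f$ is increasing on $E^+$ and decreasing on $E^-$.

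For part $(i)$, take $x_1\le x_2$ in $E^+$ and $y\in E^-$. Then $f(x_2)\ge f(x_1)$ and $f(y)\ge 0$, so $f(x_2)f(y)\ge f(x_1)f(y)$. Also $g(x_2)\le g(x_1)$ and $g(y)<0$, so $g(x_2)g(y)\ge g(x_1)g(y)$. Adding the two inequalities gives $\kappa(x_2,y)\ge\kappa(x_1,y)\ge 0$, hence $W(x_2,y)=1$ by Lemma \ref{lemma:adjacency}.

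For part $(ii)$, I would first try the symmetry $x\mapsto 1-x$, $g\mapsto -g$. This preserves $\kappa$, the adjacency criterion, the ellipse equation, the sign of $f$, and the requirement that $g$ be decreasing. It interchanges $E^+$ and $E^-$ and reverses the order, so it turns $(ii)$ into an instance of $(i)$. Before relying on it, I would redo the direct comparison. Take $x_2\le x_1$ in $E^-$ and $y\in E^+$, so $g(y)>0$. Then
\[
\kappa(x_1,y)-\kappa(x_2,y)=\bigl(f(x_1)-f(x_2)\bigr)f(y)+\bigl(g(x_1)-g(x_2)\bigr)g(y).
\]
To get $W(x_2,y)=1\Rightarrow W(x_1,y)=1$ I need this difference to be $\ge 0$.

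The main obstacle is exactly this sign check, and under the conventions as stated it fails. With $g$ decreasing on $E^-$ we have $g(x_1)\le g(x_2)$, and with $f$ decreasing on $E^-$ we have $f(x_1)\le f(x_2)$. Both terms are then $\le 0$, so $\kappa(x_1,y)\le\kappa(x_2,y)$. The symmetry argument gives the same thing: it maps the conclusion of $(i)$ to the converse implication $W(x_1,y)=1\Rightarrow W(x_2,y)=1$, not to the one worded in $(ii)$. Concretely, in $E^-$ the point with larger $f$ and smaller $|g|$ is the one nearer $E^0$, and for $y\in E^+$ that is the point that is more easily adjacent to $y$.

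So I can establish part $(i)$ as stated, but not part $(ii)$ as worded. I would resolve $(ii)$ by rechecking the orientation conventions for $E^-$ in the paper, namely whether $g$ is decreasing and whether the inequality $x_2\le x_1$ is meant in the opposite direction. I expect the intended statement is the converse, with the roles of $x_1$ and $x_2$ exchanged, which the argument above proves.
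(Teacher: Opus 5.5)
Your proof of $(i)$ is exactly the paper's sign comparison via Lemma~\ref{lemma:adjacency}. Your diagnosis of $(ii)$ is also correct: the paper dispatches $(ii)$ with ``a similar argument works'', but that comparison gives $\kappa(x_1,y)\le\kappa(x_2,y)$ for $x_2\le x_1$ in $E^-$. It therefore proves only the converse $W(x_1,y)=1\Rightarrow W(x_2,y)=1$, so $(ii)$ as worded has the implication reversed. The converse is what the paper actually uses later, for example $U_1\sim U_5\Rightarrow U_1\sim U_4$ in Case~2 of the $k=6$ claim. The worded version is incompatible with the paper's own configurations: in $H_6$ one has $U_3\sim U_5$ but $U_3\nsim U_6$, even though $\beta_5>\beta_6$ places $U_5$ to the left of $U_6$.
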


\begin{proof}
Since $x_2\geq x_1\in E^+$, we have $f(x_2)\geq f(x_1)$ and $g(x_2)\leq g(x_1)$. 
Since $g(y)<0$, we see that 
\[
f(x_2)f(y)+g(x_2)g(y)\geq f(x_1)f(y) + g(x_1)g(y).
\]
Assertion $(i)$ now follows from Lemma \ref{lemma:adjacency} and the assumption that $W(x_1, y)=1$. 
A similar argument works for assertion $(ii)$.
\end{proof}

\subsection{Back to (weighted) graphs}

In Lemma \ref{lemma:six_steps_f_g}, we have shown that eigenfunctions $f,g$ are step-functions 
with at most $6$ steps a.e. on the interval $[0,1]$. So consider a partition 
$U_1\sqcup \cdots \sqcup U_k$ of $[0,1]$ ($k\leq 6$) such that for $i=1,\ldots, k$, we have 
\begin{itemize}
\item $g\equiv \beta_i$ on $U_i$ a.e. for some numbers $\beta_1>\cdots >\beta_k$, and
\item $f\equiv \alpha_i$ on $U_i$ a.e., where $\alpha_i\ge 0$ is related to $\beta_i$ 
by the ellipse equation in Lemma \ref{lemma:ellipse}.
\end{itemize}

Define $u_i: = m(U_i)$. Clearly, $\sum_{i=1}^k u_i = 1$. Define a graph $G^*$ (possibly with loops) 
such that $V(G) = \{U_1, \ldots, U_k\}$ and $U_i\sim U_j$ whenever $W(x,y) = 1$ on $U_i\times U_j$ 
a.e. Using Lemma \ref{lemma:adjacency} it is clear that $U_i\sim U_i$ for all $i$, i.e., every 
vertex in $G^*$ has a loop. Now, consider the weighted matrix 
\[
M^* := D_{u}^{1/2} A(G^*)D_{u}^{1/2} = 
\begin{cases}
    \sqrt{u_iu_j}, & \text{if }U_i\sim U_j; \\
    0, & \text{otherwise}.
\end{cases}
\]
Here $A(G^*)$ is the adjacency matrix of $G^*$ with rows and columns indexed by 
$U_1, \ldots, U_k$ and $D_{u}$ denotes the diagonal matrix with entries $\{u_1, \ldots, u_k\}$. 
We relate the spectral sum of the extremal graphon $W$ to the spectral sum of the matrix $M^*$.

\begin{lemma}\label{lemma:weighted_graph}
We have $\mu_1 = \lambda_1(M^*)$ and $\mu_2 = \lambda_2(M^*)$.
\end{lemma}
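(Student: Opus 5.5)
The plan is to transfer spectral information in both directions between $W$ and $M^*$. The key observation is that $W$ is a step-graphon on the partition $U_1,\dots,U_k$. By Lemma \ref{lemma:adjacency}, $W(x,y)=1$ exactly when $\kappa(x,y)\ge 0$. Since $f$ and $g$ are constant on each $U_i$ a.e., $\kappa$ is constant on each block $U_i\times U_j$ a.e. Hence $W$ is a.e. equal to the indicator of $\bigcup_{U_i\sim U_j} U_i\times U_j$, and every block is either full or empty. This is what makes the graph $G^*$ well defined.

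For the inequality $\lambda_1(M^*)+\lambda_2(M^*)\ge \mu_1+\mu_2$, and more precisely for the individual eigenvalues, I would encode $f$ and $g$ as vectors. Define $a=(\sqrt{u_i}\,\alpha_i)_i$ and $b=(\sqrt{u_i}\,\beta_i)_i$ in $\mathbb{R}^k$. Then $\|a\|^2=\sum u_i\alpha_i^2=\|f\|_2^2=1$, and likewise $\|b\|=1$ and $\langle a,b\rangle=\langle f,g\rangle=0$. Integrating the eigenvalue equation \eqref{eq:eigenvalue_equation_mu1_f} at a point of $E\cap U_i$ gives $\sum_{j:U_i\sim U_j}u_j\alpha_j=\mu_1\alpha_i$. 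Multiplying by $\sqrt{u_i}$ yields $M^*a=\mu_1 a$, and similarly $M^*b=\mu_2 b$. So $\mu_1$ and $\mu_2$ are eigenvalues of $M^*$ with orthonormal eigenvectors.

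Conversely, any eigenvector $v$ of $M^*$ with eigenvalue $\lambda$ lifts to the step function $h(x)=v_i/\sqrt{u_i}$ for $x\in U_i$. Here I would discard any $U_i$ of measure zero, or simply assume $u_i>0$. A direct computation gives $A_W h=\lambda h$ and $\|h\|_2=\|v\|$. Orthogonal eigenvectors lift to orthogonal eigenfunctions. Therefore the spectrum of $M^*$, counted with multiplicity, is contained in the spectrum of $A_W$ (the nonzero part of the spectrum of $A_W$ in fact coincides with it).

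Now I would conclude as follows. The lifting argument gives $\lambda_1(M^*)\le\mu_1$ and $\lambda_2(M^*)\le \mu_2$, because two orthonormal eigenvectors for $\lambda_1(M^*)$ and $\lambda_2(M^*)$ give a two-dimensional subspace of $\mathscr{L}^2$ on which the Rayleigh quotient is at least $\lambda_2(M^*)$; applying \eqref{eq:min-max-graphon} yields the inequality. In the other direction, $a$ and $b$ are orthonormal eigenvectors of $M^*$ with eigenvalues $\mu_1\ge\mu_2$. This gives $\lambda_1(M^*)\ge\mu_1$, and $\lambda_2(M^*)\ge\mu_2$ by min–max on $\mathrm{span}\{a,b\}$. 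The main point needing care is the multiplicity issue when $\mu_1=\mu_2$ or when eigenvalues repeat. The min–max formulation on two-dimensional subspaces handles this uniformly in both directions, so no separate case analysis should be needed. A secondary technicality is ensuring the eigen-equations hold at some point of each $U_i$, which follows because $E$ has full measure and each $u_i>0$.
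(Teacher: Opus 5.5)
Your proposal is correct and follows essentially the paper's argument. In both, $(\sqrt{u_i}\,\alpha_i)_i$ and $(\sqrt{u_i}\,\beta_i)_i$ are eigenvectors of $M^*$, and eigenvectors of $M^*$ lift to step eigenfunctions $h=a_i/\sqrt{u_i}$ of $W$. Your two-dimensional min--max argument in both directions, and your check that each block $U_i\times U_j$ is full or empty, spell out details the paper compresses into ``similarly'' for $\mu_2$ and into the definition of $G^*$.
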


\begin{proof}
By the eigenvalue-equation \eqref{eq:eigenvalue_equation_mu1_f} for $\mu_1$, 
for any $i = 1, \ldots, k$, we have 
\begin{align*}
\mu_1\alpha_i = \sum_{U_j\sim U_i} \alpha_j u_j 
\end{align*}
which is equivalent to 
\begin{align*}
\mu_1 \sqrt{u_i}\alpha_i = \sum_{U_j\sim U_i} \sqrt{u_j} \alpha_j \sqrt{u_ju_i}.
\end{align*}
This implies $\mu_1$ is an eigenvalue of $M^*$ with eigenvector $(\sqrt{u_i}\,\alpha_i)_{i=1}^k$. 

Conversely, if $(a_1, \ldots, a_k)$ is an eigenvector for $\lambda_1(M^*)$, 
then it is clear that the step-function given by 
\[
h(x) = \frac{a_i}{\sqrt{u_i}}, \quad x\in U_i
\]
is an eigenfunction of $W$ with eigenvalue $\lambda_1(M^*)$. We conclude that $\mu_1 = \lambda_1(M^*)$. 

Similarly, one can argue that $\mu_2 = \lambda_2(M^*)$.
\end{proof}

Note that the lengths $u_i$ sum to $1$ and are undetermined at this stage. 
The spectral sum of the matrix $M^*$ depends on these lengths, equivalently on 
the diagonal matrix $D_u$, even if the underlying graph $G^*$ is the same. Let us define 
\[
\sigma(G^*) := \max_{D_u} \sigma(M^*),
\]
and call it the \emph{spectral sum} of $G^*$. 

Thus, to finish the proof of Theorem \ref{thm:upper_bound_8_over_7_graphon}, we only 
need to show that the spectral sum $\sigma(G^*)$ is at most $8/7$. To that end, 
we next determine the possible graphs $G^*$ for each $1\leq k\leq 6$. 

\subsection{Structure of $G^*$}
\label{sec:structure_G_star}

Precisely, we will show the following.

\begin{lemma}\label{lemma:structure_of_G} 
The graph $G^* \in \{P_3, P_4, H_5, H_6\}$, where $P_3, P_4, H_5, H_6$ 
are as shown in Figure \ref{fig:structure_G}. 
\end{lemma}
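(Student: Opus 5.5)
The plan is to pin down $G^*$ from three ingredients: the adjacency criterion (Lemma~\ref{lemma:adjacency}), the monotonicity of neighbourhoods (Lemma~\ref{lemma:monotonicity}), and the eigenvector equations in Lemma~\ref{lemma:weighted_graph}. Together these should show that $G^*$ is a twin-free "staircase" graph. Then I enumerate the finitely many such graphs allowed by Lemma~\ref{lemma:six_steps_f_g}.

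\textbf{Step 1 (block description).} I order the classes $U_1,\dots,U_k$ by decreasing $\beta_i$. I split them into positive classes $P=\{U_i:\beta_i>0\}$, at most one zero class $Z$ (where $\beta_i=0$), and negative classes $N=\{U_i:\beta_i<0\}$. Lemma~\ref{lemma:six_steps_f_g} gives $|P|+|Z|\le 3$ and $|Z|+|N|\le 3$. By Lemma~\ref{lemma:adjacency}, $P\cup Z$ and $Z\cup N$ each induce a clique with all loops.

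Both $P$ and $N$ are nonempty. Indeed, $\langle f,g\rangle=0$ forces $g$ to change sign wherever $f>0$. Moreover, $f>0$ a.e.: otherwise the ellipse equation makes $|g|$ maximal on $S^0(f)$, and the eigen-equation $\mu_1 f=A_Wf$ then gives a contradiction with $\mu_2\ge 1/7$ through the adjacency rule. I expect this to be a short argument, but it needs checking.

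\textbf{Step 2 (nested cross edges).} By Lemma~\ref{lemma:monotonicity}, the bipartite graph between $P$ and $N$ is a Ferrers (difference) graph. Listing $P$ with $f$ increasing (so $\beta$ decreasing towards $0$) and $N$ with $\beta$ increasing towards $0$, the neighbourhoods in $N$ of successive $P$-classes are nested. Symmetrically for the $N$-classes.

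\textbf{Step 3 (twin-freeness and non-degeneracy).} I exclude two configurations.
\begin{itemize}
\item[(a)] If two classes $U_i\neq U_j$ had the same closed neighbourhood in $G^*$, the eigen-equations $\mu_1\alpha_i=\sum_{U_l\sim U_i}\alpha_l u_l$ and $\mu_2\beta_i=\sum_{U_l\sim U_i}\beta_l u_l$ would force $\alpha_i=\alpha_j$ and $\beta_i=\beta_j$. This contradicts the distinctness of the $\beta$'s.
\item[(b)] If every $P$–$N$ pair were adjacent, then $W\equiv 1$ and $\mu_2=0$, contradicting $\mu_2\ge 1/7$.
\end{itemize}
Together with (a), the nested structure then becomes a strict staircase (half graph) between $P$ and $N$. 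The zero class $Z$, when present, is adjacent to everything. The extreme vertices are constrained as follows: the $P$-class with largest $\beta$ (smallest $f$) must have no $N$-neighbours, since otherwise, by nesting, it would be a twin of a neighbour or would force complete adjacency. Symmetrically for the most negative class.

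\textbf{Step 4 (enumeration).} With these rules I list the possibilities for $(|P|,|Z|,|N|)$ subject to the bounds in Step~1 and match the resulting twin-free staircase graphs with loops to $P_3$, $P_4$, $H_5$, $H_6$ in Figure~\ref{fig:structure_G}. Cases that are isomorphic under $g\mapsto -g$ are identified. A few leftover candidates may survive the combinatorics, for instance a staircase where a vertex's neighbourhood is compatible with nesting but inconsistent with the signs. These I kill using the adjacency criterion directly: the sign of $\alpha_i\alpha_j+\beta_i\beta_j$ must match adjacency, combined with the ordering of $\alpha$ along the ellipse. A graph whose forced sign pattern is infeasible on the ellipse $\mu_1\alpha^2+\mu_2\beta^2=\mu_1+\mu_2$ is excluded.

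The main obstacle is this last filtering. Pure combinatorics (nesting plus twin-freeness) likely leaves more than four graphs. The extra exclusions require a genuinely geometric argument: points on an ellipse with $\alpha\ge 0$, and adjacency given by the sign of an inner product. That argument shows certain non-adjacencies and adjacencies cannot coexist. I would attack it by parametrising each class by an angle on the ellipse and noting that adjacency means angular distance at most a fixed threshold after rescaling. This should make the admissible adjacency patterns "circular-interval" graphs and cut the list down.
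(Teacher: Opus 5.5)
You use the same ingredients as the paper: looped cliques on $P\cup Z$ and $Z\cup N$, nested cross-neighbourhoods, true-twin-freeness, and $|P|+|Z|\le 3$, $|Z|+|N|\le 3$. The staircase packaging is a tidier alternative to the paper's $k$-by-$k$ case analysis. However, the ``extreme vertices'' rule in Step 3 is false. Take $\beta_1>0>\beta_2>\beta_3$. If $U_1\nsim U_2$, nesting gives $U_1\nsim U_3$, and then $U_2,U_3$ are true twins; so in fact $U_1\sim U_2$. Thus the $P$-class with largest $\beta$ does have an $N$-neighbour. The only effect is that $U_2$ becomes universal, which produces neither a twin nor complete adjacency.

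The correct constraint is that $G^*$ has at most one universal vertex, and this vertex is the zero class when $Z\neq\emptyset$. Write the cross-neighbourhood of $p_i$ as $\{n_1,\dots,n_{r_i}\}$, with $p_a$ and $n_1$ the classes nearest $0$. Twin-freeness forces the set $\{r_1<\dots<r_a\}$ to be $\{0,\dots,b-1\}$, $\{1,\dots,b-1\}$ or $\{0,\dots,b\}$, and only the first when $Z\neq\emptyset$. Your rule keeps only the first option. It therefore discards $(|P|,|Z|,|N|)=(1,0,2),(2,0,1),(2,0,3),(3,0,2)$ for no valid reason. These give $P_3$ and $H_5$ again, so the final list is unaffected, but as written the step is unsound.

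Second, the enumeration you postpone is the entire content of the lemma, and you misdiagnose what survives it. With the correct constraints, the combinatorics already produces exactly $P_3,P_4,H_5,H_6$ plus one extra graph: $|P|=|N|=1$, $Z=\emptyset$, no cross edge, i.e.\ two disjoint looped vertices. That graph is twin-free, not complete, and obeys your extreme-vertex rule, so it passes every filter you list, and your proposal never treats $k=2$. Your proposed ellipse geometry does not remove it either. Points with $\beta_1>0>\beta_2$ and $\alpha_1\alpha_2+\beta_1\beta_2<0$ exist on the ellipse; for example, take $\alpha_1=\alpha_2=0$. What removes it is spectral: here $\sigma(M^*)=\operatorname{tr}M^*=u_1+u_2=1<8/7$, contradicting \eqref{eq:lower_bound}. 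This is exactly the paper's $k=2$ step, and no circular-interval argument is needed anywhere.

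Likewise, Step 1 does not need $f>0$ a.e. If $P$ or $N$ is empty, then $G^*$ is a looped clique, which has true twins once $k\ge 2$. The case $k=1$ contradicts $\langle f,g\rangle=0$.
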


\begin{figure}[H]
\begin{subfigure}{0.49\textwidth}
\centering
    \begin{tikzpicture}[
    vertex/.style={circle,draw,minimum size=6mm,inner sep=0pt, thick, fill = gray!30}
]

\node[vertex] (v1) at (0,0) {$1$};
\node[vertex] (v2) at (2,0) {$2$};
\node[vertex] (v3) at (4,0) {$3$};

\draw (v1) -- (v2) -- (v3);

\path[-,min distance=1cm] (v1)edge[in=65,out=115,above] node {}(v1);
\path[-,min distance=1cm] (v2)edge[in=65,out=115,above] node {}(v2);
\path[-,min distance=1cm] (v3)edge[in=65,out=115,above] node {}(v3);
\end{tikzpicture}
    \caption{$P_3$}
\end{subfigure}
\hfill
\begin{subfigure}{0.49\textwidth}
    \centering
 \begin{tikzpicture}[
    vertex/.style={circle,draw,minimum size=6mm,inner sep=0pt, thick, fill = gray!30}
]

\node[vertex] (v1) at (0,0) {$1$};
\node[vertex] (v2) at (2,0) {$2$};
\node[vertex] (v3) at (4,0) {$3$};
\node[vertex] (v4) at (6,0) {$4$};

\draw (v1) -- (v2) -- (v3) -- (v4);

\path[-,min distance=1cm] (v1)edge[in=65,out=115,above] node {}(v1);
\path[-,min distance=1cm] (v2)edge[in=65,out=115,above] node {}(v2);
\path[-,min distance=1cm] (v3)edge[in=65,out=115,above] node {}(v3);
\path[-,min distance=1cm] (v4)edge[in=65,out=115,above] node {}(v4);
\end{tikzpicture}
    \caption{$P_4$}
\end{subfigure}

\begin{subfigure}{0.49\textwidth}
    \centering
\begin{tikzpicture}[
    vertex/.style={circle,draw,minimum size=6mm,inner sep=0pt, thick, fill = gray!30}
]

\node[vertex] (v1) at (0,0) {$1$};
\node[vertex] (v2) at (2,0) {$2$};
\node[vertex] (v3) at (3,2) {$3$};
\node[vertex] (v4) at (4,0) {$4$};
\node[vertex] (v5) at (6,0) {$5$};

\draw (v1) -- (v2) -- (v4) -- (v5);
\draw 
(v1) -- (v3)
(v2) -- (v3)
(v4) -- (v3)
(v5) -- (v3);

\path[-,min distance=1cm] (v1)edge[in=65,out=115,above] node {}(v1);
\path[-,min distance=1cm] (v2)edge[in=65,out=115,above] node {}(v2);
\path[-,min distance=1cm] (v3)edge[in=65,out=115,above] node {}(v3);
\path[-,min distance=1cm] (v4)edge[in=65,out=115,above] node {}(v4);
\path[-,min distance=1cm] (v5)edge[in=65,out=115,above] node {}(v5);
\end{tikzpicture}    
    \caption{$H_5$}
\end{subfigure}
\hfill
\begin{subfigure}{0.49\textwidth}
    \centering
 \begin{tikzpicture}[
    vertex/.style={circle,draw,minimum size=6mm,inner sep=0pt, thick, fill = gray!30}
]

\node[vertex] (v1) at (0,0) {$1$};
\node[vertex] (v2) at (2,0) {$2$};
\node[vertex] (v3) at (3,2) {$3$};
\node[vertex] (v4) at (4,0) {$4$};
\node[vertex] (v5) at (6,0) {$5$};
\node[vertex] (v6) at (5,-2) {$6$};

\draw (v1) -- (v2) -- (v4) -- (v5);
\draw 
(v1) -- (v3)
(v2) -- (v3)
(v4) -- (v3)
(v5) -- (v3)
(v4) -- (v6)
(v5) -- (v6);

\path[-,min distance=1cm] (v1)edge[in=65,out=115,above] node {}(v1);
\path[-,min distance=1cm] (v2)edge[in=65,out=115,above] node {}(v2);
\path[-,min distance=1cm] (v3)edge[in=65,out=115,above] node {}(v3);
\path[-,min distance=1cm] (v4)edge[in=65,out=115,above] node {}(v4);
\path[-,min distance=1cm] (v5)edge[in=65,out=115,above] node {}(v5);
\path[-,min distance=1cm] (v6)edge[in=65,out=115,above] node {}(v6);
\end{tikzpicture}   
    \caption{$H_6$}
\end{subfigure}
    \caption{Possibilities for $G^*$. The vertex labeled by $i$ in the figure corresponds to $U_i$.}
    \label{fig:structure_G}
\end{figure}
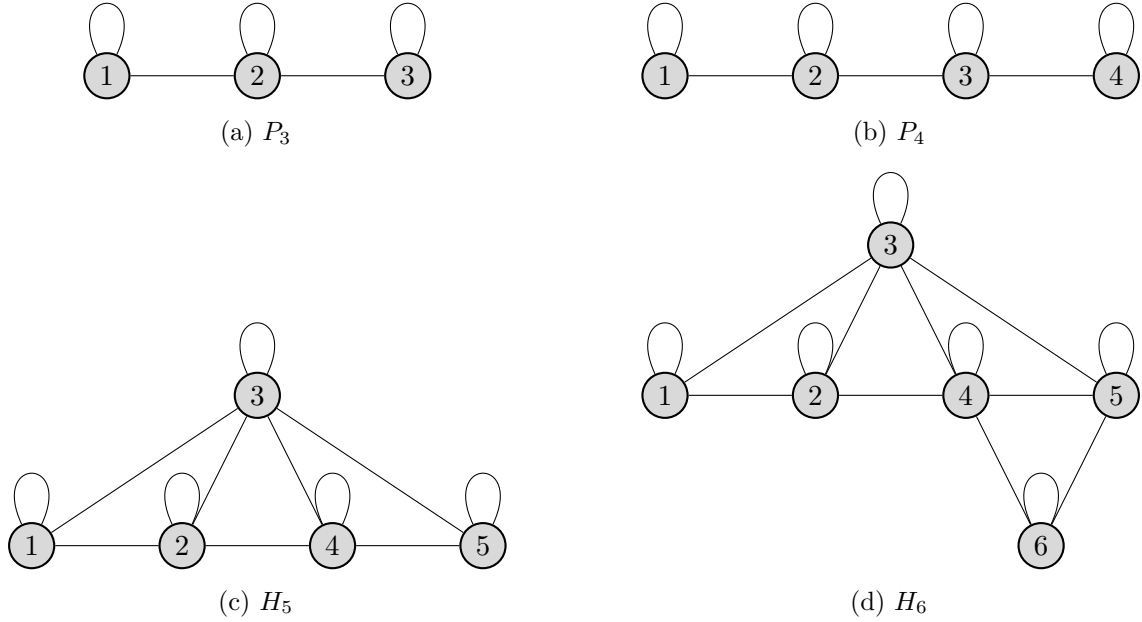

We will prove Lemma \ref{lemma:structure_of_G} in a series of claims. We say that two 
distinct vertices in a graph are \emph{true twins} if they have the same closed neighbourhoods. 
We observe that $G^*$ cannot have true twins.

\begin{lemma}\label{lemma:true_twin_free} 
The graph $G^*$ is true-twin-free.
\end{lemma}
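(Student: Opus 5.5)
Suppose, for contradiction, that $U_i$ and $U_j$ ($i\neq j$) are true twins in $G^*$, i.e.\ they have the same closed neighbourhood. Since every vertex of $G^*$ has a loop, $U_i\sim U_j$, and for every $\ell$ we have $U_\ell\sim U_i$ if and only if $U_\ell\sim U_j$. The plan is to merge $U_i$ and $U_j$ into a single part and contradict the extremality of $W$. Concretely, I would replace $f$ and $g$ on $U_i\cup U_j$ by constants and leave $W$ unchanged. This is allowed because $W$ is already constant ($\equiv 1$) on $(U_i\cup U_j)^2$, and for each $\ell$ it is the same constant on $U_i\times U_\ell$ and on $U_j\times U_\ell$. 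So $W$ itself is a step graphon on the coarser partition in which $U_i\cup U_j$ is one part.

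The cleanest route is to invoke Lemma~\ref{lemma:weighted_graph}. Since $(U_i\cup U_j)\times U_\ell$ is uniformly adjacent or uniformly non-adjacent for every $\ell$, the operator $A_W$ maps the space of step functions on the coarse partition (with $k-1$ parts) into itself. The eigenfunctions $f$ and $g$ lie in the space of step functions on the fine partition. The eigenvalue equations at $x\in U_i$ and $x\in U_j$ then read
\[
\mu_1\alpha_i=\sum_{U_\ell\sim U_i}\alpha_\ell u_\ell=\sum_{U_\ell\sim U_j}\alpha_\ell u_\ell=\mu_1\alpha_j,
\qquad
\mu_2\beta_i=\mu_2\beta_j .
\]
Since $\mu_2\ge 1/7>0$ by \eqref{eq:lower_bound}, the second equation forces $\beta_i=\beta_j$. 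This contradicts the choice $\beta_1>\cdots>\beta_k$ of distinct values of $g$ on distinct parts. (The first equation also gives $\alpha_i=\alpha_j$, using $\mu_1>0$.)

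So the argument is essentially a one-line consequence of the eigen-equations, plus the positivity of $\mu_2$. The main point to check carefully is that the eigenvalue equations hold pointwise on $E\cap U_i$ and $E\cap U_j$, both of which have positive measure. This lets us evaluate $g$ at representatives $x\in U_i\cap E$ and $y\in U_j\cap E$ and write
\[
\mu_2 g(x)=\int W(x,z)g(z)\,\dif z=\int W(y,z)g(z)\,\dif z=\mu_2 g(y).
\]
The middle equality holds because $W(x,\cdot)=W(y,\cdot)$ a.e.: the row of $W$ at $x$ is determined by the closed neighbourhood of the part containing $x$, and the two parts have the same closed neighbourhood. No variational or cloning argument is needed, so I expect no real obstacle beyond noting that the definition of $G^*$, via the adjacency criterion of Lemma~\ref{lemma:adjacency}, makes $W$ a.e.\ constant on each block $U_i\times U_\ell$.
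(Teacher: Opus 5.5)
Your proof is correct and follows the paper's idea: twins have a.e.\ identical rows of $W$, so an eigen-equation forces equal eigenfunction values on $U_i$ and $U_j$. The difference is that the paper uses the $\mu_1$-equation to get $\alpha_i=\alpha_j$, while you use the $\mu_2$-equation with $\mu_2\ge 1/7>0$ to get $\beta_i=\beta_j$.

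Your choice is in fact the cleaner one. The parts were defined by distinct values of $g$, so $\beta_i=\beta_j$ is an immediate contradiction. By contrast, $\alpha_i=\alpha_j$ only gives $|\beta_i|=|\beta_j|$ via the ellipse equation, and the case $\beta_i=-\beta_j$ must then be ruled out by precisely your $g$-equation. The merging/extremality framing in your first paragraph is unnecessary and can be dropped.
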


\begin{proof}
Suppose $U_i, U_j\in V(G^*)$ are true twins. Then, by the eigenvalue-equation \eqref{eq:eigenvalue_equation_mu1_f} 
for $\mu_1$, we have 
\[
\mu_1 \alpha_i = u_i \alpha_i + u_j\alpha_j + \sum_{\ell \neq i,j}u_\ell \alpha_\ell = \mu_1 \alpha_j, 
\]
implying $\alpha_i = \alpha_j$, a contradiction.
\end{proof}

Let us now analyze the structure of $G^*$ for different values of $k$.

\begin{claim}
If $k\le 3$, then $G^*\cong P_3$.
\end{claim}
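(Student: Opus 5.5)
We have $k\le 3$ blocks $U_1,\dots,U_k$, ordered so that $\beta_1>\cdots>\beta_k$. The plan is to rule out $k=1$ and $k=2$, and then, for $k=3$, to pin down the edge set using the adjacency criterion, the sign pattern of $g$, and Lemma~\ref{lemma:true_twin_free}.

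\emph{Excluding $k\le 2$.} If $k=1$, then $G^*$ is a single looped vertex and $M^*=(1)$. This gives $\mu_2=0$, which contradicts $\mu_2\ge 1/7$. If $k=2$, note that $g$ is orthogonal to $f\ge 0$, and $f$ does not vanish identically. So $g$ must take a positive value $\beta_1$ and a negative value $\beta_2$, giving $U_1\subseteq S^+(g)$ and $U_2\subseteq S^-(g)$. We have two subcases:
\begin{itemize}
\item If $U_1\sim U_2$, then $U_1$ and $U_2$ are true twins, since both are looped. This contradicts Lemma~\ref{lemma:true_twin_free}.
\item If $U_1\not\sim U_2$, then $G^*$ is two disjoint loops. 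Hence $M^*=\diag(u_1,u_2)$ and $\mu_1+\mu_2=u_1+u_2=1<8/7$, which contradicts \eqref{eq:lower_bound}.
\end{itemize}
So $k=3$.

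\emph{The case $k=3$.} First, $G^*$ is connected. If it were disconnected, then $M^*$ would be block diagonal with blocks of sizes $(1,2)$ or $(1,1,1)$. The spectral sum is then at most the sum of the largest eigenvalues of two blocks. Each block $M$ satisfies $\lambda_1(M)\le \tr$-type bound $\le \sum_{i\in\text{block}}u_i$, because every entry of $M$ is at most $\sqrt{u_iu_j}$ and $\lambda_1(D_u^{1/2}JD_u^{1/2})=\sum u_i$. This gives $\mu_1+\mu_2\le 1$, again a contradiction.

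A connected looped graph on three vertices is either $K_3$ with loops, in which all three vertices are true twins, or $P_3$ with loops. It remains to exclude $K_3$. In $K_3$ all three vertices are true twins, and Lemma~\ref{lemma:true_twin_free} forbids this. Hence $G^*\cong P_3$.

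As a consistency check, the middle vertex should be $U_2$. By the adjacency criterion and the monotonicity of $g$, the vertices with $\beta_i\ge 0$ form a clique, as do those with $\beta_i\le 0$. The non-edge must therefore join $U_1\subseteq S^+(g)$ and $U_3\subseteq S^-(g)$.

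\emph{Main obstacle.} The step needing the most care is the disconnected subcase. There I must justify the bound $\lambda_1$ of a block $\le$ its total weight, which follows from entrywise domination and Perron--Frobenius. I must also note that the two largest eigenvalues of a block diagonal matrix are at most $\lambda_1$ of two distinct blocks, or $\lambda_1+\lambda_2$ of a single block. In the single-block case, the block with all-looped twins has rank one, so its $\lambda_2=0$. These facts close every subcase by contradiction with $\mu_1+\mu_2\ge 8/7$.
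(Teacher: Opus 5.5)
Your proof is correct, but for $k=3$ it takes a different route from the paper.

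\textbf{The case $k=3$.} The paper uses $g\mapsto -g$ to assume $\beta_1>\beta_2\ge 0>\beta_3$. Lemma~\ref{lemma:adjacency} then immediately gives the edge $U_1\sim U_2$, and $P_3$ is the only true-twin-free graph on three vertices with an edge. You never use the sign pattern of $g$ here. Instead you rule out a disconnected $G^*$ quantitatively: entrywise domination by $D_u^{1/2}JD_u^{1/2}$ bounds each component's $\lambda_1$ by its weight, and a looped $K_2$ component has rank one, so $\sigma(M^*)\le 1<8/7$. Twin-freeness then removes $K_3$.

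The paper's version is shorter and purely combinatorial once the adjacency criterion is available. Yours invokes \eqref{eq:lower_bound} a second time and needs a Perron--Frobenius comparison. In fact the $(1,2)$ split is already excluded by twin-freeness, since the two vertices of a looped $K_2$ component with the third vertex isolated are true twins. So only the edgeless case needs a spectral bound, and there it is trivial because $M^*=\diag(u_1,u_2,u_3)$.

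\textbf{The cases $k\le 2$.} For $k=2$, the paper avoids your case split entirely: any $2\times 2$ matrix satisfies $\lambda_1+\lambda_2=\tr M^*=u_1+u_2=1$. For $k=1$, your observation that $W\equiv 1$ has rank one, so $\mu_2=0$, is as good as the paper's argument that $\langle f,g\rangle=\alpha_1\beta_1\neq 0$.

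\textbf{A harmless inaccuracy.} In the $k=2$ case, orthogonality to $f\ge 0$ alone does not force $\beta_1>0>\beta_2$. For example, $\beta_2=0$ with $\alpha_1=0$ satisfies $\langle f,g\rangle=0$, the ellipse equation, and both normalizations. That configuration is excluded only by the eigenvalue equation, or by twin-freeness. Your two subcases never use the signs, so nothing breaks, but that sentence should be dropped.
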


\begin{proof}
The case $k=1$ is not possible. Indeed, $1 = \| f\|_2 = \alpha_1$ and $1 = \| g\|_2 = |\beta_1|$. 
This gives $0 = \langle f,g\rangle = \alpha_1\beta_1 \neq 0$, a contradiction. 

If $k=2$, then $\sigma(M^*) = \tr(M^*) = \sum_{i=1}^2 u_i = 1$ for all choices of $D_u$. 
By \eqref{eq:lower_bound}, we know that $\sigma(G^*)\geq 8/7$. Thus, there is no graph $G^*$ of order $2$. 

Assume now that $k=3$. Without loss of generality, we can assume that $\beta_1 > \beta_2 \geq 0 > \beta_3$, 
since $g$ and $-g$ are both eigenfunctions for $\mu_2$. Then, by Lemma \ref{lemma:adjacency}, 
we see that $U_1\sim U_2$ in $G^*$. It is easy to check that the only graph on $3$ vertices 
with at least one edge and true-twin-free is $P_3$ (obviously with a loop on each vertex). 
\end{proof}

We already know that $U_i\sim U_i$ for all $1\le i\le k$, i.e., each vertex of $G^*$ has loops. 
In the proofs of the claims below, we will repeatedly use the fact that $G^*$ is true-twin free 
(Lemma \ref{lemma:true_twin_free}) and monotonicity of neighbourhoods (Lemma \ref{lemma:monotonicity}) without mention.

\begin{claim} 
Suppose $k = 4$, then $G^* \cong P_4$.    
\end{claim}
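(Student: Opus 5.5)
We will determine $G^*$ for $k=4$ by combining the sign pattern of $g$, the adjacency criterion, and the two structural facts already established. Order the classes so that $\beta_1>\beta_2>\beta_3>\beta_4$.

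\textbf{Sign pattern.} By Lemma \ref{lemma:six_steps_f_g} and the fact that $g$ changes sign (since $\langle f,g\rangle=0$ and $f\ge 0$ is not a.e.\ zero), at least one $\beta_i$ is positive and at least one is negative. Replacing $g$ by $-g$ if necessary, we may assume that either $\beta_1>\beta_2\ge 0>\beta_3>\beta_4$ or $\beta_1>\beta_2>\beta_3\ge 0>\beta_4$.

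\textbf{Known adjacencies.} By Lemma \ref{lemma:adjacency}, all classes in $S^+(g)\cup S^0(g)$ are mutually adjacent, and likewise all classes in $S^-(g)\cup S^0(g)$. The remaining undetermined pairs are those $(U_i,U_j)$ with $\beta_i>0>\beta_j$. On these pairs, Lemma \ref{lemma:monotonicity} gives a staircase structure: if $U_i\sim U_j$ with $\beta_i>0>\beta_j$, then $U_{i'}\sim U_j$ for positive classes with larger $f$-value, that is, $i'\ge i$ among the positive classes, and $U_i\sim U_{j'}$ for negative classes with $j'\le j$.

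\textbf{First case: $\beta_1>\beta_2\ge0>\beta_3>\beta_4$.} Here $\{U_1,U_2\}$ and $\{U_3,U_4\}$ are cliques. By monotonicity, the cross edges form a staircase whose ``strongest'' pair is $U_2U_3$. If there are no cross edges, $G^*$ is disconnected. Then $M^*$ is a direct sum of two $2\times2$ all-positive blocks, and $\sigma(M^*)\le \tr(M^*)=1<8/7$, which contradicts \eqref{eq:lower_bound}. (Alternatively, $\mu_1$ has a nonnegative eigenfunction supported on one component, and a short trace count rules this out.) If the only cross edge is $U_2U_3$, then $G^*\cong P_4$. Every further staircase option creates true twins, which is excluded by Lemma \ref{lemma:true_twin_free}. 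Adding $U_2U_4$ makes $U_2,U_3$ adjacent to everything, so they are twins. Adding $U_1U_3$ similarly makes $U_2$ and $U_3$ universal twins. Having all four cross edges gives $K_4$.

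\textbf{Second case: $\beta_1>\beta_2>\beta_3\ge 0>\beta_4$.} Here $U_1,U_2,U_3$ form a clique and $U_4$ has neighbours among them forming a monotone set. If $U_4$ is adjacent to exactly one class, then $U_1$ and $U_2$ are true twins. If $U_4$ is adjacent to exactly two classes, one positive class and the zero class are twins. If $U_4$ is adjacent to all three, then every vertex is universal. All of these contradict Lemma \ref{lemma:true_twin_free}. If $U_4$ is adjacent to none, $G^*$ is disconnected: it is $K_3$ together with an isolated looped vertex. In that case $\lambda_1(M^*)+\lambda_2(M^*)=(u_1+u_2+u_3)+u_4=1<8/7$, which again contradicts \eqref{eq:lower_bound}.

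\textbf{Expected obstacle.} The main care point is pinning down exactly which index ordering the monotonicity lemma refers to. On $E^+$, $f$ increases as $g$ decreases, so the positive class with the smallest $\beta$ has the largest $f$ and is the most likely to have cross edges. We must also handle the possibility of a zero class consistently. Once these are fixed, the remaining work is the finite check above.
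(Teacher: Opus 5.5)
This is essentially the paper's proof, and it is correct up to some bookkeeping slips. Your two cases are the paper's Cases~2 and~1 in swapped order, and you eliminate every configuration except $P_4$ using the adjacency criterion, monotonicity of neighbourhoods and true-twin-freeness. Your trace bound $\sigma(M^*)=1<8/7$ for the disconnected configurations is a valid alternative, but those configurations already contain true twins (e.g.\ $U_1,U_2$), which is how the paper disposes of them.

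Fix the following slips:
\begin{enumerate}
\item In your first case, with cross edges $\{U_2U_3,U_1U_3\}$ the twins are $U_1,U_2$, not $U_2,U_3$.
\item Also in your first case, with cross edges $\{U_2U_3,U_2U_4\}$ the twins are $U_3,U_4$. There $U_3\nsim U_1$ is possible, so $U_2,U_3$ are both universal only when $U_1U_3$ and $U_2U_4$ are both present.
\item In your second case the twins are $U_2,U_3$, whether or not $U_3$ is a zero class.
\item The sign change of $g$ does not follow from $\langle f,g\rangle=0$ and $f\ge 0$ alone, since $g$ could vanish off the zero set of $f$. It does follow from Lemma~\ref{lemma:six_steps_f_g}: if all $\beta_i\ge 0$ (resp.\ $\le 0$), then $f$ would take four values on $S^+(g)\cup S^0(g)$ (resp.\ $S^0(g)\cup S^-(g)$).
\end{enumerate}
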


\begin{proof} In light of Lemma \ref{lemma:six_steps_f_g}, one of the following occurs:

\textbf{Case 1:} $\beta_1 > \beta_2 > \beta_3\ge 0 > \beta_4$.

Then, by Lemma \ref{lemma:adjacency}, $U_1, U_2, U_3$ induce a triangle in $G^*$. If $U_4\sim U_2$, 
then $U_3\sim U_4$, implying $U_2$ and $U_3$ are true-twins, a contradiction. If $U_4\nsim U_2$, 
then $U_4\nsim U_1$, and so $U_1$ and $U_2$ are true-twins, again a contradiction.

\textbf{Case 2:} $\beta_1 > \beta_2 > 0 > \beta_3 > \beta_4$.

We have $U_1\sim U_2$ and $U_3\sim U_4$. If $U_1\sim U_4$, then $U_1, U_2$ are adjacent to 
both $U_3, U_4$, implying $U_1$ and $U_2$ are true-twins, a contradiction. So suppose $U_1\nsim U_4$. 

If $U_1\sim U_3$, then $U_2\sim U_3$. If $U_2\nsim U_4$, then again $U_1$ and $U_2$ are true-twins, 
a contradiction. Thus, $U_2\sim U_4$. But then $U_2$ and $U_3$ are true-twins, a contradiction. 
So $U_1\nsim U_3, U_4$. Arguing similarly with $U_4$, we see that $U_4\nsim U_1, U_2$. 

Now, if $U_2\nsim U_3$, then $U_1$ and $U_2$ are true-twins, a contradiction. Thus, the only 
possibility is $U_2\sim U_3$. This completely determines $G^*$, and we have $G^*\cong P_4$. 
\end{proof}

\begin{claim} 
If $k = 5$, then $G^*\cong H_5$.    
\end{claim}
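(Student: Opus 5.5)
The plan is to split by the sign pattern of $\beta_1>\cdots>\beta_5$ and show that every admissible adjacency structure is either isomorphic to $H_5$ or has true twins. By Lemma \ref{lemma:six_steps_f_g}, at most $3$ of the $U_i$ lie in $S^+(g)\cup S^0(g)$ and at most $3$ lie in $S^0(g)\cup S^-(g)$. Since there are $5$ parts, exactly one of the following holds: $\beta_1>\beta_2>0=\beta_3>\beta_4>\beta_5$; three positive and two negative values; or two positive and three negative values. The last case reduces to the second by replacing $g$ with $-g$ (and relabelling). Throughout I use Lemma \ref{lemma:adjacency}: all parts on the same side of $0$, together with any zero part, form a clique. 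Since $f\ge 0$, a zero part $U_3$ has $\kappa\ge 0$ with everything, so $U_3$ is universal. I also use the nesting given by Lemma \ref{lemma:monotonicity}, together with Lemma \ref{lemma:true_twin_free}.

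\textbf{Case A} ($\beta_3=0$). Here $U_3$ is universal, $U_1\sim U_2$ and $U_4\sim U_5$. Only the cross edges between $\{U_1,U_2\}$ and $\{U_4,U_5\}$ are undetermined. Monotonicity says the negative neighbourhood of $U_2$ contains that of $U_1$, and the positive neighbourhood of $U_4$ contains that of $U_5$. Because $U_3$ is adjacent to everyone, it does not affect whether two vertices in $\{U_1,U_2,U_4,U_5\}$ are true twins. Hence the argument of Case 2 of the $k=4$ claim applies verbatim to these four vertices. It forces $U_1\nsim U_4,U_5$, $U_5\nsim U_1,U_2$ and $U_2\sim U_4$. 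The result is the path $U_1U_2U_4U_5$ plus the universal vertex $U_3$, which is exactly $H_5$. Finally I check that $U_3$ is not a twin of anyone: every other vertex misses some vertex.

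\textbf{Case B} ($\beta_1>\beta_2>\beta_3>0>\beta_4>\beta_5$). Here $\{U_1,U_2,U_3\}$ is a clique and $U_4\sim U_5$. For a positive vertex $U_i$, set $N_i:=N(U_i)\cap\{U_4,U_5\}$. By Lemma \ref{lemma:monotonicity}$(ii)$, each $N_i$ is an initial segment, so $N_i\in\{\emptyset,\{U_4\},\{U_4,U_5\}\}$. By $(i)$, $N_1\subseteq N_2\subseteq N_3$. The closed neighbourhood of $U_i$ is $\{U_1,U_2,U_3\}\cup N_i$, so true-twin-freeness forces the three $N_i$ to be distinct. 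This gives $N_1=\emptyset$, $N_2=\{U_4\}$ and $N_3=\{U_4,U_5\}$. Then $U_3$ is universal, and the remaining edges $U_1U_2$, $U_2U_4$, $U_4U_5$ form a path. Again $G^*\cong H_5$, with $U_3$ as the apex, and one checks directly that no twins remain.

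The only real obstacle is bookkeeping: getting the direction of nesting in Lemma \ref{lemma:monotonicity} right, since $g$ is decreasing and $f$ is increasing on $E^+$ and decreasing on $E^-$. I also need to confirm that no other sign pattern survives Lemma \ref{lemma:six_steps_f_g}, for instance three positive parts plus a zero part is excluded. I would tabulate the allowed patterns first and then run the two case analyses above.
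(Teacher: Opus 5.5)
Your proof is correct and uses the same ingredients as the paper's: the adjacency criterion, monotonicity of neighbourhoods, and true-twin-freeness. The paper runs a single case analysis on $N(U_1)\cap\{U_4,U_5\}$ with $\beta_3\ge 0$ treated uniformly, whereas your nested chain $N_1\subsetneq N_2\subsetneq N_3$ is a tidier packaging of the same forcing; it would also dispose of Case A directly, because a zero part $U_3$ is universal. Your orientation of the nesting is the right one (a vertex of $E^-$ closer to $0$ has the larger neighbourhood in $E^+$, so each $N_i$ is an initial segment). Note that Lemma~\ref{lemma:monotonicity}(ii) as printed states that implication in the reverse direction, but the paper's case analyses use it the way you do.
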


\begin{proof}
Using Lemma \ref{lemma:six_steps_f_g}, without loss of generality, we can assume that 
\[ 
\beta_1 > \beta_2 > \beta_3 \ge 0 > \beta_4 > \beta_5.
\] 
Thus, $U_1,U_2, U_3$ induce a triangle in $G^*$. Moreover, $U_4\sim U_5$. We consider the following cases:

\textbf{Case 1:} $U_1\sim U_5$. 

Then $U_1$ is adjacent to all vertices in $G^*$. This means $U_2$ is also adjacent to all vertices, 
implying $U_1$ and $U_2$ are true-twins, a contradiction.

\textbf{Case 2:} $U_1\nsim U_5$ and $U_1\sim U_4$.

Then $U_1, U_2, U_3$ are all adjacent to $U_4$. If $U_2\nsim U_5$, then $U_1$ and $U_2$ 
are true-twins, a contradiction. So suppose $U_2\sim U_5$. This implies $U_2$ and $U_3$ 
are adjacent to all vertices in $G^*$ and hence they are true-twins, a contradiction.

\textbf{Case 3:} $U_1\nsim U_4, U_5$. 

If $U_2\sim U_5$, then $U_2$ and $U_3$ are adjacent to all vertices, and hence they are true-twins, 
a contradiction. So suppose $U_2\nsim U_5$. If $U_2\nsim U_4$, then $U_1$ and $U_2$ are true-twins, 
a contradiction. So assume that $U_2\sim U_4$. We have $N[U_2] = \{U_1, U_2, U_3, U_4\}$. 
If $U_3\nsim U_5$, then $U_2$ and $U_3$ are true-twins, a contradiction. So we have $U_3\sim U_5$, 
i.e., $N[U_3] = \{U_1, U_2, U_3, U_4, U_5\}$. This completely determines $G^*$ and we see that $G^*\cong H_5$. 
\end{proof}

\begin{claim} 
If $k = 6$, then $G^*\cong H_6$.    
\end{claim}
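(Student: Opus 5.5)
Since $k=6$, the first step is to pin down the sign pattern of $g$. By Lemma \ref{lemma:six_steps_f_g}, $f$ (and hence $g$) takes at most $3$ values on $S^+(g)\cup S^0(g)$ and at most $3$ values on $S^0(g)\cup S^-(g)$. If $S^0(g)$ had positive measure, the single value of $g$ on it (namely $0$) would be counted in both families. Then $g$ would take at most $3+3-1=5$ values, contradicting $k=6$. Hence, after possibly replacing $g$ by $-g$ (which does not matter here), we have
\[
\beta_1>\beta_2>\beta_3>0>\beta_4>\beta_5>\beta_6 .
\]
By Lemma \ref{lemma:adjacency}, $U_1,U_2,U_3$ induce a triangle and $U_4,U_5,U_6$ induce a triangle in $G^*$. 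Only the bipartite adjacencies between $P=\{U_1,U_2,U_3\}$ and $N=\{U_4,U_5,U_6\}$ remain to be determined.

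Next I encode these cross adjacencies as a Ferrers (staircase) pattern using Lemma \ref{lemma:monotonicity}. Part $(i)$ says that among $U_1,U_2,U_3$ the cross neighbourhoods are nested increasingly: $N(U_1)\cap N\subseteq N(U_2)\cap N\subseteq N(U_3)\cap N$. Part $(ii)$ says that each such neighbourhood is closed under moving toward $U_4$. Concretely, if $U_i\sim U_6$ then $U_i\sim U_5$, and if $U_i\sim U_5$ then $U_i\sim U_4$. So each $N(U_i)\cap N$ is a prefix of $(U_4,U_5,U_6)$ of length $a_i$, with $0\le a_1\le a_2\le a_3\le 3$. Symmetrically, writing $b_j=|N(U_j)\cap P|$ for $j=4,5,6$, the set $N(U_j)\cap P$ is a suffix of $(U_1,U_2,U_3)$, and $b_4\ge b_5\ge b_6$. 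Explicitly, $b_j=\#\{i: a_i\ge j-3\}$.

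Now I apply Lemma \ref{lemma:true_twin_free}. Within $P$, any two vertices have the same closed neighbourhood inside $P$, so they differ only in their cross parts; twin-freeness therefore forces $a_1<a_2<a_3$. Likewise, within $N$ it forces $b_4>b_5>b_6$. The strict chains $a_1<a_2<a_3$ in $\{0,1,2,3\}$ are $(0,1,2)$, $(0,1,3)$, $(0,2,3)$ and $(1,2,3)$. I check each:
\begin{itemize}
\item $(0,1,3)$ gives $b=(2,1,1)$, so $U_5,U_6$ are twins.
\item $(0,2,3)$ gives $b=(2,2,1)$, so $U_4,U_5$ are twins.
\item $(1,2,3)$ gives $b=(3,2,1)$, but $U_3$ and $U_4$ are then both adjacent to everything, so they are true twins.
\item $(0,1,2)$ gives $b=(2,1,0)$, so the cross edges are exactly $U_2U_4$, $U_3U_4$, $U_3U_5$.
\end{itemize}
The surviving graph has triangles on $\{1,2,3\}$ and $\{4,5,6\}$ plus edges $24$, $34$, $35$, with loops everywhere. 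This matches $H_6$ in Figure \ref{fig:structure_G} (edges $12,13,23,24,34,35,45,46,56$).

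The only delicate step is the first one: ruling out a zero level set using the overlap in the counting of Lemma \ref{lemma:six_steps_f_g}. I also need to make sure that all six values $\beta_i$ are genuinely distinct on sets of positive measure, which holds by the definition of the partition. The rest is a finite case check.
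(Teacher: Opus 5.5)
Your proof is correct and uses the same ingredients as the paper: the sign pattern from Lemma \ref{lemma:six_steps_f_g}, the two triangles from Lemma \ref{lemma:adjacency}, monotonicity of neighbourhoods, and true-twin-freeness. The paper runs a nested case analysis on the cross-neighbourhoods of $U_1$ and then $U_2$, whereas your staircase encoding with $a_1<a_2<a_3$ compresses the same check into four chains, and your $3+3-1$ count supplies the justification, which the paper only asserts, that $S^0(g)$ is null when $k=6$.
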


\begin{proof} 
By Lemma \ref{lemma:six_steps_f_g}, we have 
\[ 
\beta_1 > \beta_2 > \beta_3 > 0 > \beta_4 > \beta_5 > \beta_6.
\] 
By Lemma \ref{lemma:adjacency} we see that $U_1, U_2, U_3$ (resp. $U_4, U_5, U_6$) induce 
a triangle in $G^*$. We consider the following cases:

\textbf{Case 1:} $U_1 \sim U_6$.
    
Then $N[U_1] = \{U_1, \ldots, U_6\}$. This implies $U_2, U_3$ are also adjacent to all vertices in $G^*$. 
But then $U_1$ and $U_2$ are true-twins, a contradiction.

\textbf{Case 2:} $U_1\nsim U_6$ and $U_1\sim U_5$.

Then $U_1\sim U_4$ and thus $U_2, U_3$ are also adjacent to $U_4, U_5$. If $U_2\nsim U_6$, 
then $U_1$ and $U_2$ are true-twins. If $U_2\sim U_6$, then $U_3\sim U_6$ and so $U_2$ and 
$U_3$ are true-twins. In either case, we have a contradiction.

\textbf{Case 3:} $U_1\nsim U_5, U_6$ and $U_1\sim U_4$. 

Then $U_2, U_3$ are adjacent to $U_4$. 
    
\textbf{Subcase 3.1:} $U_2\sim U_6$. Then $U_2$ and $U_3$ are adjacent to all vertices in $G^*$, a contradiction. 

\textbf{Subcase 3.2:} $U_2\nsim U_6$ and $U_2\sim U_5$. ~  Then $U_2,U_3$ are adjacent to $U_4, U_5$. 
If $U_3\nsim U_6$, then $U_2$ and $U_3$ are true-twins. If $U_3\sim U_6$, then $U_3$ and $U_4$ are true-twins. 
In both cases, we have a contradiction.

\textbf{Subcase 3.3:} $U_2\nsim U_5, U_6$ and $U_2\sim U_4$. ~ Then $U_1$ and $U_2$ are true-twins, a contradiction.

\textbf{Case 4:} $U_1\nsim U_4, U_5, U_6$.

Arguing as above, one can show that $U_6$ also is not adjacent to $U_1,U_2, U_3$. So we only need to 
worry about the edges (or non-edges) between $U_2, U_3$ and $U_4, U_5$. If $U_2\sim U_5$, then $U_2, U_3$ 
are adjacent to $U_4, U_5$ and so $U_2, U_3$ are true-twins, a contradiction. So assume $U_2\nsim U_5$. 
If $U_2\nsim U_4$, then $U_1$ and $U_2$ are true-twins, a contradiction. So we have $N[U_2] = \{U_1, U_2, U_3, U_4\}$. 
If $U_3\nsim U_5$, then $U_2$ and $U_3$ are true-twins, a contradiction. So the only possibility is 
$N[U_3] = \{U_1, U_2, U_3, U_4, U_5\}$. This determines the graph $G^*$. Indeed, $G^*\cong H_6$. 
\end{proof}

The above claims complete the proof of Lemma \ref{lemma:structure_of_G}. Interestingly, 
the graphs $P_3, P_4, H_5$ are induced subgraphs of $H_6$. This means that if $k < 6$, 
we can add sub-intervals with measure zero to the list $U_1, U_2, \ldots, U_k$ so that 
we have a partition of $[0,1]$ into $6$ sub-intervals. Moreover, we can define $f$ and $g$ 
on these additional measure zero sets so that $g$ always has non-negative values on $U_1, U_2, U_3$ 
and negative values on $U_4, U_5, U_6$ a.e. In other words, $g\equiv \beta_i$ on $U_i$ a.e. where 
\[
\beta_1 > \beta_2 > \beta_3 \ge 0 > \beta_4 > \beta_5 > \beta_6.
\] 
Thus, the problem is now reduced to proving the following.

\begin{lemma}\label{lemma:H_6} 
For $1\le i\le 6$, let $0\le u_i\le 1$ be such that $\sum_i u_i = 1$. Let $D_u$ denote the diagonal 
matrix with diagonal entries $u_1, \ldots, u_6$. Let 
\[
M^* = D_{u}^{1/2} A(H_6)D_{u}^{1/2}.
\]
Then $\sigma(M^*)\le 8/7$. 
\end{lemma}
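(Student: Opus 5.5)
We plan to prove Lemma \ref{lemma:H_6} by reducing the bound to a polynomial positivity statement and certifying it by a sum-of-squares decomposition. Write $A:=A(H_6)$ with loops, so $A=I+B$, where $B$ is the adjacency matrix of the loopless graph with edges $12,13,23,24,34,35,45,46,56$. Then $M^*=D_u+D_u^{1/2}BD_u^{1/2}$ has trace $\sum_i u_i=1$. The spectral sum $\lambda_1(M^*)+\lambda_2(M^*)$ is the largest eigenvalue of the induced operator $M^*\wedge I+I\wedge M^*$ on the exterior square $\Lambda^2\mathbb{R}^6$, a $15\times 15$ matrix whose entries are again linear in the $\sqrt{u_iu_j}$. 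This is the exterior-algebra step. The claim $\sigma(M^*)\le 8/7$ is equivalent to
\[
\tfrac{8}{7}\,I_{15}-\bigl(M^*\wedge I+I\wedge M^*\bigr)\succeq 0 \quad\text{for all } u\in\Delta_5 .
\]

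We will substitute $u_i=x_i^2$, which removes all square roots. The matrix $\mathcal{M}(x):=\tfrac{8}{7}(\sum_i x_i^2)I_{15}-(M^*\wedge I+I\wedge M^*)(x)$ then becomes a homogeneous quadratic matrix polynomial in $x\in\mathbb{R}^6$. Homogeneity lets us drop the simplex constraint, so we only need $\mathcal{M}(x)\succeq 0$ for all $x\in\mathbb{R}^6$. Equivalently, the biquadratic form $p(x,y)=y^{\top}\mathcal{M}(x)y$, with $y\in\mathbb{R}^{15}$, must be nonnegative. This should be the content of the intermediate statement (Lemma \ref{lemma:H_6_x}).

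Before searching for a certificate we will exploit symmetry. $H_6$ has the automorphism $i\mapsto 7-i$, which maps $1\leftrightarrow 6$, $2\leftrightarrow 5$, $3\leftrightarrow 4$. We will use it to block-diagonalize $\Lambda^2$ and to reduce the number of free parameters. We also need to know where equality holds. Using the extremal family of \cite{Ebrahimi_Mohar_Nikiforov_Ahmady_2008}, we will locate the weight vector $u^\star$ that attains $8/7$, for example weights in sevenths on a sub-configuration. Any certificate must be tight there, so the kernel of $\mathcal{M}(x^\star)$ prescribes which squares may appear.

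The main step is to find a matrix sum-of-squares representation
\[
\mathcal{M}(x)=\sum_k Q_k(x)^{\top}Q_k(x),
\]
with each $Q_k$ linear in $x$. This amounts to finding a PSD Gram matrix $G$ with $p(x,y)=(x\otimes y)^{\top}G\,(x\otimes y)$. We will solve this semidefinite feasibility problem numerically, using the symmetry to shrink its size. We will then round to an exact rational $G$ and verify it by an exact $LDL^{\top}$ factorization, so that the resulting identity can be checked by hand or by computer algebra.

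We expect the main obstacle to be exactness. Because the bound is tight, $G$ must be singular, so naive rounding will break positive semidefiniteness. We will first try to project onto the affine space of valid Gram matrices restricted to the face determined by the known zero set at $x^\star$, and then round within that face. If a pure quadratic certificate in $x$ does not exist, the fallback is to multiply by $\sum_i x_i^2$, which raises the degree by one level, and repeat the search.
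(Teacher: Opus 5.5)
Your plan is essentially the paper's proof. Both pass to the second additive compound $\psi(M^*(\bm{x}))$ on $\Lambda^2(\mathbb{R}^6)$ with $x_i=\sqrt{u_i}$, then certify $\frac87 I_{15}-\psi(M^*(\bm{x}))\succeq 0$ by a matrix sum-of-squares identity that is found numerically and then made exact over $\mathbb{Q}$.

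Your homogenized Gram formulation is equivalent to the paper's ansatz $V(\bm{x})^{\mathrm{T}}QV(\bm{x})+(1-\|\bm{x}\|_2^2)T$. A certificate of the paper's form gives one of yours by taking $G=I_6\otimes Q_{00}+(Q_{ab})_{1\le a,b\le 6}$, so the degree-two certificate you aim for exists and the fallback is not needed.
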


We will prove the above Lemma \ref{lemma:H_6} using matrix sum of squares technique from 
convex optimization. Lemma \ref{lemma:H_6} is a non-convex optimization problem which we 
first convert to a higher dimensional convex optimization problem using exterior algebra. 

\subsection{Exterior algebra}

Let $M$ be a real square matrix of order $n$. For matrices $M,N$, let $M \otimes N$ denote 
the usual Kronecker product of matrices. Define 
\[ 
N:=M\otimes I_n + I_n\otimes M.
\]  
A combinatorial interpretation of $N$ is that if $M$ is the adjacency matrix of a graph, 
then $N$ is the adjacency matrix of the cartesian product of the graph with itself. 

For two vectors $\bm{u}, \bm{v}\in \mathbb{R}^n$, the \emph{wedge product} of $\bm{u}$ and $\bm{v}$ is given by 
\[ 
\bm{u}\wedge \bm{v}:= \bm{u}\otimes \bm{v} - \bm{v}\otimes \bm{u}. 
\]
Let $e_1, \ldots, e_n$ denote the standard basis of $\mathbb{R}^n$. Consider 
the $2$-nd \emph{exterior power} of $\mathbb{R}^n$ given by
\[ 
\Lambda^2 (\mathbb{R}^n)=\operatorname{span}\{e_i\wedge e_j: 1\leq i<j\leq n\},
\]
which is an antisymmetric subspace of the tensor space $\mathbb{R}^n\otimes \mathbb{R}^n$. 
An orthonormal basis of $\Lambda^2 (\mathbb{R}^n)$ is given by
\[
\mathcal{B} := \left\{\frac{e_i \wedge e_j}{\sqrt{2}}: 1\leq i<j\leq n\right\},
\]
implying that the dimension of $\Lambda^2 (\mathbb{R}^n)$ is $\binom{n}{2}$. 
We know that $\bm{u}\wedge \bm{v}\in \Lambda^2 (\mathbb{R}^n)$ for all $\bm{u}, \bm{v}\in \mathbb{R}^n$. 
See, for instance, \cite{MacLane_Birkhoff_1988} for details.  

We observe that the subspace $\Lambda^2 (\mathbb{R}^n)$ is invariant under the linear transformation $N$.

\begin{proposition}\label{prop:N_invariant} 
For all $\bm{u}, \bm{v}\in \mathbb{R}^n$, we have 
\[ 
N(\bm{u}\wedge \bm{v}) = (M\bm{u})\wedge \bm{v} + \bm{u}\wedge (M\bm{v}).
\]
\end{proposition}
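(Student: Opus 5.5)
The plan is a direct computation from bilinearity of the Kronecker product together with the mixed-product rule $(A\otimes B)(\bm{x}\otimes \bm{y}) = (A\bm{x})\otimes(B\bm{y})$. First I will expand
\[
N(\bm{u}\otimes \bm{v}) = (M\otimes I_n)(\bm{u}\otimes\bm{v}) + (I_n\otimes M)(\bm{u}\otimes \bm{v}) = (M\bm{u})\otimes \bm{v} + \bm{u}\otimes (M\bm{v}),
\]
and then write down the same identity with $\bm{u}$ and $\bm{v}$ swapped:
\[
N(\bm{v}\otimes \bm{u}) = (M\bm{v})\otimes \bm{u} + \bm{v}\otimes (M\bm{u}).
\]

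Next I will subtract the second identity from the first, using linearity of $N$. I will then regroup the four resulting terms in pairs:
\[
N(\bm{u}\wedge\bm{v}) = \bigl[(M\bm{u})\otimes \bm{v} - \bm{v}\otimes (M\bm{u})\bigr] + \bigl[\bm{u}\otimes (M\bm{v}) - (M\bm{v})\otimes \bm{u}\bigr].
\]
By the definition of the wedge product, the first bracket is $(M\bm{u})\wedge\bm{v}$ and the second is $\bm{u}\wedge(M\bm{v})$, which is exactly the claimed formula.

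As a consequence, since every element of $\Lambda^2(\mathbb{R}^n)$ is a linear combination of the $e_i\wedge e_j$, the identity shows that $N$ maps $\Lambda^2(\mathbb{R}^n)$ into itself. This is the invariance asserted just before the proposition.

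There is no real obstacle here. The only point needing care is the mixed-product rule for the Kronecker product acting on the vector $\bm{u}\otimes\bm{v}$, viewed as an element of $\mathbb{R}^{n^2}$. This rule is standard and can be checked entrywise if desired.
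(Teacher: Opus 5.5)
Your proposal is correct and takes essentially the same approach as the paper. You expand $N(\bm{u}\otimes\bm{v}-\bm{v}\otimes\bm{u})$ using bilinearity and the mixed-product rule $(A\otimes B)(\bm{x}\otimes\bm{y})=(A\bm{x})\otimes(B\bm{y})$, then regroup the four terms into $(M\bm{u})\wedge\bm{v}+\bm{u}\wedge(M\bm{v})$; your remark that this gives invariance of $\Lambda^2(\mathbb{R}^n)$ under $N$ is also valid, since the wedge of any two vectors lies in $\Lambda^2(\mathbb{R}^n)$.
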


\begin{proof}
Recall that $(A \otimes B)(C \otimes D) = AC \otimes BD$ for any square matrices $A, B, C, D$ of a given order. Then 
\begin{align*}
N(\bm{u}\wedge \bm{v}) &= \left(M \otimes I_n + I_n \otimes M \right) (\bm{u} \otimes \bm{v} - \bm{v} \otimes \bm{u}) \\
& = (M \otimes I_n)(\bm{u} \otimes \bm{v}) + (I_n \otimes M)(\bm{u} \otimes \bm{v}) - (M \otimes I_n)(\bm{v} \otimes \bm{u}) - (I_n \otimes M)(\bm{v} \otimes \bm{u}) \\
& = (M\bm{u} \otimes \bm{v} - \bm{v} \otimes M\bm{u}) + (\bm{u} \otimes M\bm{v} - M\bm{v} \otimes \bm{u}) \\
& = (M\bm{u})\wedge \bm{v} + \bm{u}\wedge (M\bm{v}). \qedhere
\end{align*}
\end{proof}

Let $P \in \mathbb{R}^{n^2 \times \binom{n}{2}}$ be the matrix whose columns are the basis 
vectors in $\mathcal{B}$. Consider the map 
$\psi: \mathbb{R}^{n\times n}\rightarrow \mathbb{R}^{\binom{n}{2} \times \binom{n}{2}}$ given by
\begin{equation}\label{eq:psi-M}
\psi(M):= P^{\mathrm{T}} N P = P^{\mathrm{T}}(M\otimes I_n+I_n\otimes M)P
\end{equation}
for any $M\in \mathbb{R}^{n\times n}$.

\begin{proposition}
The eigenvalues of $\psi(M)$ are precisely
\[ 
\lambda_i(M)+\lambda_j(M), \qquad 1\leq i<j\leq n. 
\]
\end{proposition}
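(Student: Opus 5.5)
Since $\psi(M)$ is the restriction of $N$ to an invariant subspace written in an orthonormal basis, we will work with $M$ symmetric, which is the case of interest since $M^*$ is symmetric. Let $\bm{v}_1,\ldots,\bm{v}_n$ be an orthonormal eigenbasis of $M$, with $M\bm{v}_i=\lambda_i(M)\bm{v}_i$.

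The first step is to compute $N$ on wedges of eigenvectors. By Proposition~\ref{prop:N_invariant},
\[
N(\bm{v}_i\wedge\bm{v}_j)=(\lambda_i(M)+\lambda_j(M))\,\bm{v}_i\wedge\bm{v}_j .
\]
Next we check that the vectors $\bm{v}_i\wedge\bm{v}_j/\sqrt2$, for $i<j$, form an orthonormal basis of $\Lambda^2(\mathbb{R}^n)$. Using $\langle \bm{a}\otimes\bm{b},\bm{c}\otimes\bm{d}\rangle=\langle\bm{a},\bm{c}\rangle\langle\bm{b},\bm{d}\rangle$, expanding gives
\[
\langle \bm{v}_i\wedge\bm{v}_j,\bm{v}_k\wedge\bm{v}_l\rangle=2(\delta_{ik}\delta_{jl}-\delta_{il}\delta_{jk}).
\]
So for $i<j$ and $k<l$ these $\binom n2$ vectors are orthonormal after scaling by $1/\sqrt2$. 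They lie in $\Lambda^2(\mathbb{R}^n)$, which has dimension $\binom n2$, so they form a basis of it.

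Now we pass to $\psi(M)$. The columns of $P$ are orthonormal, so $P^{\mathrm T}P=I_{\binom n2}$, and $PP^{\mathrm T}$ is the orthogonal projection onto $\Lambda^2(\mathbb{R}^n)$. Set $\bm{w}_{ij}:=P^{\mathrm T}(\bm{v}_i\wedge\bm{v}_j)$. Since $\bm{v}_i\wedge\bm{v}_j\in\Lambda^2(\mathbb{R}^n)$, we have $P\bm{w}_{ij}=\bm{v}_i\wedge\bm{v}_j$. Therefore
\[
\psi(M)\bm{w}_{ij}=P^{\mathrm T}N(\bm{v}_i\wedge\bm{v}_j)=(\lambda_i(M)+\lambda_j(M))\,\bm{w}_{ij}.
\]
The map $P^{\mathrm T}$ restricted to $\Lambda^2(\mathbb{R}^n)$ is an isometry onto $\mathbb{R}^{\binom n2}$. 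Hence the $\bm{w}_{ij}$, $i<j$, are orthogonal and nonzero, and they form an eigenbasis of $\psi(M)$. This shows that the spectrum of $\psi(M)$, counted with multiplicity, is exactly $\{\lambda_i(M)+\lambda_j(M)\}_{i<j}$.

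The main point needing care is the multiplicity statement, which is the word ``precisely'' in the proposition. An eigenvector argument alone only shows that these values occur as eigenvalues. Having a full orthogonal eigenbasis of size $\binom n2$ settles the multiplicities, and this requires the orthonormality computation above. If $M$ were a general matrix that is not diagonalizable, we would instead triangularize $M$ over $\mathbb{C}$. Ordering the basis $\bm{v}_i\wedge\bm{v}_j$ lexicographically makes $N$ triangular on $\Lambda^2$ with diagonal entries $\lambda_i+\lambda_j$, and the same conclusion follows. For the application to $M^*$, the symmetric case suffices.
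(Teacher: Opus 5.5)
Your proposal is correct and takes essentially the same approach as the paper: both apply Proposition~\ref{prop:N_invariant} to the wedges $\bm{v}_i\wedge\bm{v}_j$ of an orthonormal eigenbasis of $M$, then transport them to $\mathbb{R}^{\binom n2}$ via $P^{\mathrm T}$. Your computation $\langle \bm{v}_i\wedge\bm{v}_j,\bm{v}_k\wedge\bm{v}_l\rangle=2(\delta_{ik}\delta_{jl}-\delta_{il}\delta_{jk})$ also makes explicit that these $\binom n2$ eigenvectors form a basis, which settles the multiplicities; the paper leaves that step implicit.
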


\begin{proof}
Suppose $\bm{v}_1,\dots,\bm{v}_n$ denote the orthonormal eigenvectors (represented in the standard basis 
of $\mathbb{R}^n$) of $M$  corresponding to the eigenvalues $\lambda_1(M)\ge \cdots \ge \lambda_n(M)$, 
respectively. For $i<j$, denote by $(\bm{v}_i\wedge \bm{v}_j)_{\mathcal{B}}$ the representation of $\bm{v}_i\wedge \bm{v}_j$ 
in terms of the basis $\mathcal{B}$ as an element of $\Lambda^2 (\mathbb{R}^n)$. Using 
Proposition \ref{prop:N_invariant} and the fact that $P^{\mathrm{T}} P = I_{\binom{n}{2}}$, we see that
\begin{align*}
\psi(M)(\bm{v}_i\wedge \bm{v}_j)_{\mathcal{B}} 
& = (P^{\mathrm{T}} NP)(\bm{v}_i\wedge \bm{v}_j)_{\mathcal{B}} \\
& = P^{\mathrm{T}} N (\bm{v}_i\wedge \bm{v}_j) \\
& = P^{\mathrm{T}} \big((M\bm{v}_i)\wedge \bm{v}_j + \bm{v}_i\wedge (M\bm{v}_j)\big) \\
& = \big(\lambda_i(M)+\lambda_j(M)\big) P^{\mathrm{T}} (\bm{v}_i\wedge \bm{v}_j) \\
& = \big(\lambda_i(M)+\lambda_j(M)\big) (\bm{v}_i\wedge \bm{v}_j)_{\mathcal{B}}.
\end{align*}
Thus, $(\bm{v}_i\wedge \bm{v}_j)_{\mathcal{B}}$ is an eigenvector of $\psi(M)$ corresponding 
to the eigenvalue $\lambda_i(M)+\lambda_j(M)$. 
\end{proof}

\begin{remark}
It is worth mentioning that $\psi(M)$ defined in \eqref{eq:psi-M} is exactly the 
second additive compound matrix of $M=[m_{ij}]$. More generally, let $1 \leq k \leq n$, 
and let $\alpha, \beta\in \binom{[n]}{k}$. If $|\alpha\cap \beta|= k-1$, denote 
$\mathrm{sign} (\alpha,\beta) = (-1)^{|\{r\in\alpha\cap\beta,\ i<r<j\}|}$, 
where $i<j$ are the two unique elements in the symmetric difference $\alpha\Delta\beta$.
The \emph{$k$-th additive compound} of $M$ is the matrix
$M^{[k]}\in\mathbb{R}^{\binom{n}{k}\times\binom{n}{k}}$ defined by
\[
(M^{[k]})_{\alpha,\beta} =
\begin{cases}
\sum_{i\in\alpha} m_{ii}, & \text{if}\ \alpha = \beta, \\
\mathrm{sign} (\alpha, \beta)\cdot m_{ij}, & \text{if}\ |\alpha\cap\beta| = k-1, \alpha\backslash\beta = \{i\}, \beta\backslash\alpha = \{j\}, \\
0, & \text{otherwise}.
\end{cases}
\]
For further results on additive compounds, we refer the reader to \cite{Fiedler_1974}.
\end{remark}

Letting $\bm{x} = (x_1, \ldots, x_6)$, where $x_i = \sqrt{u_i}$ $(i =1, \ldots, 6)$, 
it is clear that the following implies Lemma \ref{lemma:H_6}.

\begin{lemma}\label{lemma:H_6_x} 
Let $\bm{x}\in \mathbb{R}^6$ be such that $\Vert \bm{x} \Vert_2 = 1$. 
Define $M^*(\bm{x}) = \diag(\bm{x}) A(H_6) \diag(\bm{x})$. Then
\[  
\frac87 I_{15}-\psi(M^*(\bm{x}))\succeq 0,
\]
i.e., $\frac87 I_{15}-\psi(M^*(\bm{x}))$ is a positive semi-definite matrix.
\end{lemma}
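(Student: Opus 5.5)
The plan is to certify the matrix inequality by a matrix sum-of-squares identity. Every entry of $\psi(M^*(\bm{x}))$ is a quadratic form in $\bm{x}$: the diagonal entry indexed by $\{i,j\}$ is $x_i^2+x_j^2$, since every vertex of $H_6$ carries a loop. The off-diagonal entries are $\pm x_ix_j$, coming from the edges $ij$ of $H_6$. On the sphere $\|\bm{x}\|_2=1$ we homogenize, replacing $\frac87 I_{15}$ by $\frac87\|\bm{x}\|_2^2 I_{15}$. The claim then becomes that the $15\times 15$ symmetric matrix
\[
F(\bm{x}) := \tfrac87 \Big(\sum_{i=1}^6 x_i^2\Big) I_{15} - \psi\big(\diag(\bm{x})A(H_6)\diag(\bm{x})\big)
\]
is positive semidefinite for all $\bm{x}\in\mathbb{R}^6$. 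This is a biquadratic form $\bm{y}^{\mathrm T}F(\bm{x})\bm{y}$ in $(\bm{x},\bm{y})\in\mathbb{R}^6\times\mathbb{R}^{15}$.

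\textbf{Step 1.} Write $F(\bm{x}) = \sum_{1\le a\le b\le 6} x_ax_b F_{ab}$ with explicit constant $15\times15$ matrices $F_{ab}$. Form the Gram representation
\[
F(\bm{x}) = (\bm{x}\otimes I_{15})^{\mathrm T} Q\, (\bm{x}\otimes I_{15}),
\]
where $Q$ is a symmetric $90\times 90$ matrix. The blocks $Q_{ab}$ of $Q$ must satisfy $Q_{aa}=F_{aa}$ and $Q_{ab}+Q_{ba}=F_{ab}$ for $a<b$. The remaining freedom is the skew parts of the off-diagonal blocks, and the goal is to choose them so that $Q\succeq 0$. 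Such a $Q$ immediately gives $F(\bm{x})\succeq 0$.

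\textbf{Step 2.} Reduce the size of the problem before solving it. The only sign pattern in $\psi$ is from the wedge ordering, so conjugating by a signed permutation does not change positivity. The symmetry of $H_6$ that swaps $1\leftrightarrow 6$, $2\leftrightarrow 5$ and $3\leftrightarrow 4$ can be used to symmetrize $Q$. The extremal configuration from the known construction, with value $8/7$, should give a kernel vector of $Q$; this fixes the boundary structure and forces some of the free parameters.

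\textbf{Step 3.} Solve the resulting semidefinite feasibility problem numerically. Then round the solution to an exact rational $Q$ satisfying the affine constraints. Finally, verify $Q\succeq 0$ exactly, for instance by an $LDL^{\mathrm T}$ factorization in rational arithmetic, or by exhibiting $Q=\sum_k \bm{w}_k\bm{w}_k^{\mathrm T}$.

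\textbf{Main obstacle.} The bound $8/7$ is tight, so $Q$ must be singular. This means a generic rounding of the numerical solution will break positivity. I would first compute the exact null space forced by the extremal $\bm{x}$ and its eigenvector $\bm{v}_1\wedge\bm{v}_2$. Then I would project the numerical $Q$ onto the affine space of matrices that have this kernel and satisfy the constraints, and round only within that space, keeping a strictly positive margin on the complement.

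If the plain degree-two certificate turns out to be infeasible, I would fall back on the fact that the problem only needs to hold on the sphere, or on the nonnegativity $x_i\ge 0$. Concretely, I would multiply $F$ by $\|\bm{x}\|_2^2$, or allow extra terms $\sum x_ix_j S_{ij}$ with $S_{ij}\succeq 0$ (a Pólya/Schmüdgen-type certificate). Because $x_i=\sqrt{u_i}\ge 0$, such terms are legitimate, and I expect they supply the needed slack.
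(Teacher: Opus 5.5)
Your plan matches the paper's proof. The paper also certifies $\frac87 I_{15}-\psi(M^*(\bm{x}))\succeq 0$ with a degree-two matrix sum-of-squares identity: it solves for the certificate numerically (CVXPY/SCS), rounds it to exact rationals, and verifies it exactly. The only formal difference is that the paper uses the non-homogeneous basis $V(\bm{x})=(1,x_1,\dots,x_6)^{\mathrm T}\otimes I_{15}$ with a $105\times 105$ Gram matrix $Q$ and a multiplier $(1-\|\bm{x}\|_2^2)T$. This is equivalent to your homogeneous $90\times 90$ formulation, since the block $Q_{00}$ can be absorbed as $I_6\otimes Q_{00}$. Because the paper reports that this plain degree-two certificate exists, your symmetry and facial-reduction steps are optional aids to the rounding, and the fallback certificates are not needed.
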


We prove Lemma \ref{lemma:H_6_x} using the sum of squares technique. 
We refer the reader to \cite{Parrilo_Thomas_2020} for further details.

\subsection{Matrix sum of squares}

Suppose we can find a symmetric positive semi-definite matrix $Q \in \mathbb{R}^{105 \times 105}$ 
and a symmetric matrix $T \in \mathbb{R}^{15 \times 15}$ such that
\begin{equation}\label{eq:Q_PSD}
\frac87 I_{15}-\psi(M^*(\bm{x})) = V(\bm{x})^{\mathrm{T}} QV(\bm{x})+(1-\|\bm{x}\|_2^2)T 
\end{equation}
for all $x\in \mathbb{R}^6$. Here 
\[ 
V(\bm{x}):= (1,x_1,\dots,x_6)^T\otimes I_{15}. 
\]
Then on the unit sphere
$\|\bm{x}\|_2=1$ the second term vanishes, and we obtain
\[ 
\frac87 I_{15}-\psi(M^*(\bm{x})) = V(\bm{x})^{\mathrm{T}} QV(\bm{x})\succeq 0, 
\]
implying Lemma \ref{lemma:H_6_x}. 

Exact rational matrices $Q$ and $T$, satisfying \ref{eq:Q_PSD} were found and are given \href{https://github.com/Shivaramkratos/Spectral_sum_codes/blob/main/verify_sos_identity.py}{here}. 
For details on how this was done, refer Appendix. This completes the (long!) proof of 
Theorem \ref{thm:upper_bound_8_over_7_graphon}.

\section{Concluding remarks}

In this paper, we established a tight upper bound for the spectral sum of graphs in $\mathcal{G}(n)$. 
The problem of characterizing graphs which attain the bound remains open 
(see Conjecture \ref{conj:spectral_sum_extremal_graphs} below).

For $n\geq p\geq q\geq 0$, define $K(n,p,q)$ to be the graph obtained by taking the complement of 
the union of the complete bipartite graph $K_{p,q}$ and $n-p-q$ isolated vertices. In other words, 
$K(n,p,q)$ is the join of $K_{n-p-q}$ with $K_p \cup K_q$. In their survey of automated conjectures 
in spectral graph theory, Aouchiche and Hansen \cite{Aouchiche_Hansen_2010} generalized the 
tight construction for spectral sum from \cite{Ebrahimi_Mohar_Nikiforov_Ahmady_2008} and framed the following conjecture. 

\begin{conjecture}[\cite{Aouchiche_Hansen_2010,Ebrahimi_Mohar_Nikiforov_Ahmady_2008}]\label{conj:spectral_sum_extremal_graphs}
Let $n\geq 5$. Then for all graphs $G\in \mathcal{G}(n)$, we have
\[
\lambda_1(G)+\lambda_2(G)\leq \lambda_1(K(n,p,q))+\lambda_2(K(n,p,q)),
\]
where if $n=7k+r$ with $0\leq r\le 6$, then 
\[
(p,q)= 
\begin{cases}
(2k,2k) & \text{if }r=0,1;\\
(2k+1,2k) & \text{if }r=2;\\
(2k+1,2k+1) & \text{if }r=3, 4;\\
(2k+2,2k+1) &\text{if }r=5;\\
(2k+2,2k+2) &\text{if }r=6.
\end{cases} 
\] 
Moreover, equality holds if and only if $G\cong K(n,p,q)$. 
\end{conjecture}

The key reason our proof of Theorem \ref{thm:upper_bound_8_over_7_graph} does not fully characterize 
the extremal graphs (equivalently, extremal graphons) is because of the extra assumption on $W$ that 
it has the maximum largest eigenvalue $\mu_1$ among all graphons that maximize the spectral sum. 
We require this extra assumption on $W$ at two places: first, in Lemma \ref{lemma:adjacency} to 
decide the value of $W(x,y)$ when $\kappa(x,y) = 0$; secondly, in Lemma \ref{lemma:six_steps_f_g} 
so that we can argue that $\tilde{f}$ is an eigenfunction for $\widetilde{W}$ corresponding to 
eigenvalue $\mu_1(\widetilde{W}) = \mu_1$, which finally allows us to choose $\widetilde{W}$ 
in place of $W$. We believe that bypassing this assumption on $W$ should lead to a characterization of the extremal graphon.

The problem of maximizing/minimizing the spectral sum for trees was recently resolved by 
Kumar, Mohar, Pragada and Zhan \cite{Kumar_Mohar_Pragada_Zhan_2026}. Indeed, they considered 
the convex combination $\alpha \lambda_1 + (1-\alpha)\lambda_2$ $(\alpha \in [0,1])$ of $\lambda_1$ 
and $\lambda_2$ of trees of order $n$ and fully characterized the extremal trees that maximize the convex combination for large $n$. 
It would be of interest to consider the convex combination of $\lambda_1$ and $\lambda_2$ for 
graphs in $\mathcal{G}(n)$ and find the optimal upper bound. One may also try to investigate the 
spectral sum of other families; the family of $K_r$-free graphs is of some interest. 
The minimization of spectral sum of graphs remains an open problem. We propose the 
following conjecture for interested readers.

\begin{conjecture} 
For sufficiently large $n$, the path uniquely minimizes the spectral 
sum among all connected graphs of order $n$.
\end{conjecture}

\section*{Acknowledgement}
Lele Liu is supported by the National Nature Science Foundation of China (No.\ 12471320), 
and Anhui Provincial Natural Science Foundation for Excellent Young Scholars (No.\ 2408085Y003). 
Hermie Monterde is supported in part by the Pacific Institute for the Mathematical Sciences through 
the PIMS-Simons Postdoctoral Fellowship. Michael Tait is partially supported by the US National 
Science Foundation via the grant DMS-2245556. We acknowledge the use of GPT 5.2 Plus during the ideation phase. 

\bibliographystyle{plain}
\bibliography{references_new}

\vspace{0.4cm}

\affl{Hitesh Kumar}{hitesh.kumar.math@gmail.com, hitesh\_kumar@sfu.ca}{Dept.\ of Mathematics, Simon Fraser University, Burnaby, BC \ V5A 1S6, Canada}

\affl{Lele Liu}{liu@ahu.edu.cn}{School of Mathematical Sciences, Anhui University, Hefei 230601, 
P.R. China}

\affl{Hermie Monterde}{hermie.monterde@uregina.ca}{Dept.\ of Mathematics and Statistics, University of Regina, Regina, SK, Canada S4S 0A2}

\affl{Shivaramakrishna Pragada}{shivaramakrishna\_pragada@sfu.ca}{Dept.\ of Mathematics, Simon Fraser University, Burnaby, BC \ V5A 1S6, Canada}

\affl{Michael Tait}{michael.tait@villanova.edu}{Dept.\ of Mathematics \& Statistics, Villanova University}

\newpage

\section*{Appendix}
\label{sec:Appendix}
Here, we describe the method we used to find the PSD matrix $Q$ and matrix $T$. For $1\leq i, j\leq 6$, 
let $E_{ij}$ denote the standard basis vector of $\mathbb{R}^{6\times 6}$ with $(i,j)$-th entry $1$ and 
$0$ elsewhere. Note that the map $\psi$ is linear. Then
\begin{equation}
   \psi(M(\bm{x})) = \sum_{i=1}^6 x_i^2 \psi(E_{ii}) +\sum_{1\leq i<j\leq 6} x_i x_j \psi(E_{ij} + E_{ji})A(H_{6})_{ij}. 
\end{equation}
Write $Q$ in $7\times 7$ blocks as $Q=(Q_{ab})_{0\leq a,b\leq 6}$, $Q_{ab}\in \mathbb R^{15\times 15}$. 
Expanding $V(\bm{x})^{\mathrm{T}} QV(\bm{x})$ gives
\begin{equation}
V(\bm{x})^{\mathrm{T}} QV(\bm{x}) =
Q_{00}
+\sum_{i=1}^6 x_i(Q_{0i}+Q_{i0})
+\sum_{i=1}^6 x_i^2 Q_{ii}
+\sum_{1\leq i<j\leq 6} x_i x_j(Q_{ij}+Q_{ji}).
\end{equation}
Also,
\begin{equation}
(1-\|x\|_2^2)T = T-\sum_{i=1}^6 x_i^2 T.
\end{equation}
Comparing coefficients using \eqref{eq:Q_PSD} we obtain the system
\begin{align}
    Q_{00}+T & =\frac87 I_{15}\nonumber \\
    Q_{0i}+Q_{i0} & =0 \nonumber \\
    Q_{ii}-T & =-\psi(E_{ii}) \nonumber \\
    Q_{ij}+Q_{ji} & =-\psi(E_{ij} + E_{ji})A(H_{6})_{ij}.
\end{align}
Eliminating $T$, this is equivalent to
\begin{align} \label{Eqn:Coeff_matching}
Q_{00}+Q_{ii} & =\frac87 I_{15}-\psi(E_{ii})\nonumber \\
Q_{0i}+Q_{i0} & =0 \nonumber \\
Q_{ij}+Q_{ji} & =-\psi(E_{ij} + E_{ji})A(H_{6})_{ij}.
\end{align}

We find $Q$ and $T$ and do the necessary verification as follows:
\begin{enumerate}
    \item Set up the coefficient equations \eqref{Eqn:Coeff_matching} as a semidefinite feasibility 
    problem in the unknown blocks $Q_{ab}$ and $T$, together with the constraint $Q\succeq 0$.
    \item Solve this feasibility problem numerically in \texttt{CVXPY} using the solver \texttt{SCS}, 
    to a prescribed error tolerance, obtaining numerical matrices $Q^{\mathrm{num}}$ and $T^{\mathrm{num}}$.
    \item Symmetrize the numerical solution by replacing
    \[
    Q^{\mathrm{num}}\leftarrow \frac12\bigl(Q^{\mathrm{num}}+(Q^{\mathrm{num}})^{\mathrm{T}}\bigr),
    \qquad
    T^{\mathrm{num}}\leftarrow \frac12\bigl(T^{\mathrm{num}}+(T^{\mathrm{num}})^{\mathrm{T}}\bigr).
    \]
    \item Convert the numerical matrices into exact rational matrices using \texttt{SymPy}. 
    More precisely, selected entries of $Q^{\mathrm{num}}$ and $T^{\mathrm{num}}$ are replaced by nearby rational numbers of bounded denominator, and then assembled into candidate exact matrices $Q^{\mathrm{rat}}$ and $T^{\mathrm{rat}}$.
    \item Reconstruct the remaining entries of $Q^{\mathrm{rat}}$ exactly from the linear coefficient equations, 
    so that the polynomial identity holds exactly over $\mathbb{Q}$.
    \item Verify the coefficient identity exactly using \texttt{SymPy} with rational arithmetic.
    \item Verify $Q^{\mathrm{rat}}\succeq 0$ exactly by checking a rank one decomposition 
    of $Q^{\mathrm{rat}}$ over $\mathbb{Q}$, again using \texttt{SymPy}.
\end{enumerate}
\end{document}